\newtheorem{thm}{Theorem}[section]
\newtheorem{prop}[thm]{Proposition}
\newtheorem{lem}[thm]{Lemma}
\newtheorem{cor}[thm]{Corollary}
\theoremstyle{definition}
\newtheorem{definition}[thm]{Definition}
\newtheorem{example}[thm]{Example}
\theoremstyle{remark}
\newtheorem{remark}[thm]{Remark}
\numberwithin{equation}{section}
\def\lra{\longrightarrow}
\def\Lra{\Longrightarrow}
\def\lhra{\lhook\joinrel\relbar\joinrel\rightarrow}
\def\BE#1{\begin{equation}\label{#1}}
\def\EE{\end{equation}}
\def\lr#1{\langle#1\rangle}
\def\blr#1{\big\langle#1\big\rangle}
\def\ti#1{\tilde{#1}}
\def\wt#1{\widetilde{#1}}
\def\ov#1{\overline{#1}}
\def\eref#1{(\ref{#1})}
\def\tn#1{\textnormal{#1}}
\def\De{\Delta}
\def\Ga{\Gamma}
\def\La{\Lambda}
\def\Si{\Sigma}
\def\al{\alpha}
\def\be{\beta}
\def\de{\delta}
\def\eps{\epsilon}
\def\gm{\gamma}
\def\io{\iota}
\def\ka{\kappa}
\def\na{\nabla}
\def\si{\sigma}
\def\th{\theta}
\def\ve{\varepsilon}
\def\vph{\varphi}
\def\vp{\varpi}
\def\vt{\vartheta}
\def\ze{\zeta}
\def\prt{\partial}
\def\fa{\mathfrak a}
\def\fB{\mathfrak B}
\def\bB{\mathbb B}
\def\C{\mathbb C}  
\def\cD{\mathcal{D}}
\def\bE{\mathbb E}
\def\cG{\mathcal G}
\def\bH{\mathbb H}
\def\cH{\mathcal H}
\def\cJ{\mathcal J}
\def\bL{\mathbb L}
\def\cL{\mathcal L}
\def\cM{\mathcal M}
\def\mf{\mathfrak M}
\def\cO{\mathcal O}
\def\P{\mathbb P}
\def\R{\mathbb{R}}
\def\cV{\mathcal V}
\def\bX{\mathbb X}
\def\Z{\mathbb{Z}}
\def\cZ{\mathcal Z}
\def\1{\mathbf 1}
\def\a{\mathbf a}
\def\b{\mathbf b}
\def\ff{\mathfrak f}
\def\k{\mathbf k}
\def\x{\mathbf x}
\def\oI{\mathring{\mathbb{I}}}
\def\bp{\bar{\partial}} 
\def\aut{\textnormal{Aut}}
\def\Cyl{\textnormal{Cyl}}
\def\cok{\textnormal{cok}}
\def\tnd{\textnormal{d}}
\def\Ext{\textnormal{Ext}}
\def\ev{\textnormal{ev}}
\def\GL{\textnormal{GL}}
\def\Hol{\textnormal{Hol}}
\def\Hom{\textnormal{Hom}}
\def\id{\textnormal{id}}
\def\Id{\textnormal{Id}}
\def\Im{\textnormal{Im}}
\def\ind{\textnormal{ind}}
\def\pt{\textnormal{pt}}
\def\rk{\textnormal{rk}}
\def\cP{\mathcal P}
\def\PGL{\textnormal{PGL}}
\def\Re{\textnormal{Re}}
\def\SO{\textnormal{SO}}
\def\Sym{\textnormal{Sym}}
\def\top{\textnormal{top}}
\def\bI{\mathbb I}
\def\fI{\mathfrak i}
\def\fJ{\mathfrak j}
\def\ne{\textnormal{e}}
\def\dbar{\bar\partial}
\def\eset{\emptyset}
\def\i{\infty}
\begin{document}

\title[Moduli space of maps with crosscaps]{The moduli space of maps with crosscaps:\\ 
Fredholm theory and orientability}
\author[Penka Georgieva and Aleksey Zinger]{Penka Georgieva and Aleksey Zinger$^*$} 
\thanks{$^*$Partially supported by the IAS Fund for Math and NSF grants DMS 0635607 and 0846978}
\address{Department of Mathematics, Princeton University, Princeton, NJ 08544}
\email{pgeorgie@math.princeton.edu}
\address{Department of Mathematics, SUNY Stony Brook, Stony Brook, NY 11790}
\email{azinger@math.sunysb.edu}

\date{\today}

\begin{abstract} 
Just as a symmetric surface with separating fixed locus halves into two 
oriented bordered surfaces, 
an arbitrary symmetric surface halves into two oriented symmetric half-surfaces,
i.e.~surfaces with crosscaps.
Motivated in part by the string theory view of real Gromov-Witten invariants,
we introduce moduli spaces of maps from surfaces with crosscaps, 
develop the relevant Fredholm theory, and resolve the orientability problem
in this setting.
In particular,
we give an explicit formula for the holonomy of the orientation bundle of a family of 
real Cauchy-Riemann operators over Riemann surfaces with crosscaps. 
Special cases of our formulas are closely related to the orientability question for the space of
real maps from symmetric Riemann surfaces to an almost complex manifold with 
an anti-complex involution
and in fact resolve this question in genus~0.
In particular, we show that the moduli space of real $J$-holomorphic maps from
the sphere with a fixed-point free involution to a simply connected almost 
complex manifold with an even canonical class is orientable.
In a sequel, we use the results of this paper to obtain 
a similar orientability statement for genus~1 real~maps.
\end{abstract}

\maketitle

\tableofcontents

\section{Introduction}
\label{intro_sec}

\noindent
The theory of $J$-holomorphic maps plays a prominent role in symplectic topology,
algebraic geometry, and string theory.
The foundational work of~\cite{Gr,McSa94,RT,FO,LT} has 
established the theory of (closed) Gromov-Witten invariants,
i.e.~counts of $J$-holomorphic maps from closed Riemann surfaces to symplectic manifolds.
In contrast, the theory of open and real Gromov-Witten invariants, i.e.~counts of
$J$-holomorphic maps from bordered Riemann surfaces with boundary mapping 
to a Lagrangian submanifold and of $J$-holomorphic maps from symmetric
Riemann surfaces commuting with the involutions on the domain and the target, 
has been under development over the past 10-15 years and still is today.\\

\noindent
The two main obstacles to defining the open invariants are the potential non-orientability 
of the moduli space
and the existence of real codimension-one boundary strata.
The orientability problem in open Gromov-Witten theory is addressed by
the first author in~\cite{Ge}.
Some approaches \cite{Melissa, psw, Teh} to dealing with the codimension one boundary
have raised the issue of orientability in
real Gromov-Witten theory.
Symmetric Riemann surfaces, however, have convoluted degenerations, making
the orientability of their moduli spaces difficult to study.
Physical considerations \cite{SV,AAHV,Wal}
suggest that oriented surfaces with crosscaps provide 
a suitable replacement for symmetric Riemann surfaces in real Gromov-Witten theory.
In this paper, we introduce moduli spaces of \hbox{$J$-holomorphic} maps from 
oriented surfaces with crosscaps, develop the necessary Fredholm theory,
and study the orientability of these moduli spaces.
In particular, we combine the principles of~\cite{Ge} 
with equivariant cohomology and give an explicit criterion specifying whether the determinant
line bundle of a loop of real Cauchy-Riemann operators over Riemann surfaces with crosscaps
is trivial.
As explained after Corollary~\ref{etaorient_cor} and in~\cite{GZ}, 
this last issue is related to the orientability problem in real Gromov-Witten theory
via the doubling constructions of~\eref{dblSi_e} and Section~\ref{double_sec}.
In a future paper, we will study compactifications of the moduli spaces of maps with crosscaps
and use them to define real Gromov-Witten invariants in the style of~\cite{Wal}.\\   

\noindent
A \textsf{symmetric surface} $(\hat\Si,\si)$ consists of a closed connected oriented smooth 
surface~$\hat\Si$ 
(manifold of real dimension~2) and an orientation-reversing involution 
$\si\!:\hat\Si\!\lra\!\hat\Si$.
Every anti-holomorphic involution~$\si$ on $\hat\Si\!=\!\P^1$ such that $\P^1/\si$ 
is not orientable is conjugate~to
\BE{etadfn_e}\eta\!:\P^1\lra\P^1,\qquad[u,v]\lra[-\bar{v},\bar{u}].\EE
An approach to orienting indices of real Cauchy-Riemann operators 
on real bundle pairs over~$(\P^1,\eta)$ is introduced in \cite[Section~2.1]{Teh}.
We reinterpret this construction in terms of real Cauchy-Riemann operators on
Riemann surfaces with orientation-preserving involutions on the boundary components
in a way that reduces to the orienting construction of \cite[Proposition~8.1.4]{FOOO}
whenever the boundary involutions are trivial.
This allows us to extend the principles used in~\cite{Ge}, 
which treats the case with trivial boundary involutions, to the case with any number 
of crosscaps, corresponding to the nontrivial boundary involutions.
Theorem \ref{main_thm} below specifies whether the index bundle of a loop of 
real Cauchy-Riemann operators over Riemann surfaces with 
orientation-preserving involutions on the boundary components is trivial.
As a corollary, we show that the moduli space of real $J$-holomorphic maps from $(\P^1,\eta)$
to a simply connected almost 
complex manifold with an even canonical bundle is orientable; see Corollary~\ref{etaorient_cor2}.
As another corollary, we conclude that the local system of orientations on the moduli space 
of $J$-holomorphic maps from bordered Riemann surfaces  
that commute with the involutions on the boundary and on the target
is  isomorphic to the pull-back of a local system defined on 
a product of the equivariant free loop space of the target, of
the fixed point locus of the involution on the target, 
and of its free loop space; see Corollary~\ref{ls_cor}. 
Along the way, we establish the necessary Fredholm theory for bordered surfaces
with crosscaps, discuss topological issues that crosscaps introduce,
and include examples illustrating a number of subtle points.
In~\cite{GZ}, we built on the results of this paper to orient moduli spaces of real genus~1 maps;
this is a step in our project to construct real Gromov-Witten invariants
in positive genera.\\

\noindent
An \textsf{involution} on a topological space (resp.~smooth manifold) $M$ 
is a homeomorphism (resp.~diffeomorphism) 
$\phi\!:M\!\lra\!M$ such that $\phi\!\circ\!\phi\!=\!\id_M$;
in particular, the identity map on~$M$ is an involution.
Let
$$M^{\phi}=\big\{x\!\in\!M\!:~\phi(x)\!=\!x\big\}$$
denote the fixed locus.
An involution~$\phi$ determines an action of~$\Z_2$ on~$M$;
we denote~by $H^*_{\phi}(M)$, $H_*^{\phi}(M)$, and $H_*^{\phi}(M;\Z)$
the corresponding $\Z_2$-equivariant cohomology and homology of~$M$
with $\Z_2$-coefficients and 
the $\Z_2$-equivariant homology of~$M$ with~$\Z$-coefficients, respectively;
see Section~\ref{equivcoh_sec}.
If the fixed-point locus of~$\phi$ is empty, there are canonical isomorphisms
$$H^*_{\phi}(M)\approx H^*(M/\Z_2), \qquad H_*^{\phi}(M)\approx H_*(M/\Z_2), 
\qquad H_*^{\phi}(M;\Z)\approx H_*(M/\Z_2;\Z).$$
If in addition $M$ is a compact manifold (and thus so is $M/\Z_2$),
we denote by $[M]^{\phi}\!\in\!H_*^{\phi}(M)$ the fundamental homology class of~$M/\Z_2$
with $\Z_2$-coefficients.
A \textsf{conjugation} on a complex vector bundle $V\!\lra\!M$ 
\textsf{lifting} an involution~$\phi$ is a vector bundle homomorphism 
$\ti\phi\!:V\!\lra\!V$ covering~$\phi$ (or equivalently 
a vector bundle homomorphism  $\ti\phi\!:V\!\lra\!\phi^*V$ covering~$\id_M$)
such that the restriction of~$\ti\phi$ to each fiber is anti-complex linear
and $\ti\phi\!\circ\!\ti\phi\!=\!\id_V$.
We will call the conjugation
$$\ti\phi_n\!: M\!\times\!\C^n\lra M\!\times\!\C^n, \qquad 
(x,v)\lra\big(\phi(x),\bar{v}\big)\quad\forall\,(x,v)\!\in\!M\!\times\!\C,$$
\textsf{the trivial lift of~$\phi$}.
For any conjugation~$\ti\phi$ in $V\!\lra\!M$ lifting~$\phi$, 
$V^{\ti\phi}\!\lra\!M^{\phi}$ is a maximal totally real subbundle of~$V|_{M^{\phi}}$.
We denote~by
$$\La_{\C}^{\top}(V,\ti\phi)=(\La_{\C}^{\top}V,\La_{\C}^{\top}\ti\phi)$$
the top exterior power of $V$ over $\C$ with the induced conjugation and~by
$$w_i^{\ti\phi}(V)\in H^i_{\phi}(M)$$
the $i$-th $\Z_2$-equivariant Stiefel-Whitney class of~$V$.
Moreover,
if $M$ is a manifold, possibly with boundary, or a (possibly nodal) surface, 
and $\phi$ is an involution on a submanifold $M'\!\subseteq\!M$,
a \textsf{real bundle pair} $(V,\ti\phi)\!\lra\!(M,\phi)$   
consists of a complex vector bundle $V\!\lra\!M$ and a conjugation~$\ti\phi$ on $V|_{M'}$
lifting~$\phi$.\\

\noindent
A \textsf{boundary involution} on a surface~$\Si$ with boundary $\prt\Si$ is 
an orientation-preserving involution~$c$ preserving each component of~$\prt\Si$.
The restriction of such an involution to a boundary component 
is either the identity or given~by 
\BE{antip_e}\fa\!: S^1\lra S^1, \qquad z\lra -z \quad\forall~z\!\in\!S^1\subset\C,\EE
for a suitable identification $(\prt\Si)_i\!\approx\!S^1$;
the latter type of boundary structure is called \textsf{crosscap} 
in the string theory literature.
We define
$$c_i=c|_{(\prt\Si)_i}, \qquad 
|c_i|= \begin{cases} 0,&\hbox{if}~c_i=\id;\\ 1,&\hbox{otherwise};\end{cases}
\qquad
|c|_k=\big|\{(\prt\Si)_i\!\subset\!\Si\!:\,|c_i|\!=\!k\}\big|\quad k=0,1.$$
Thus, $|c|_0$ is the number of standard boundary components of $(\Si,\prt\Si)$  
and $|c|_1$ is the number of crosscaps.\\

\noindent
An \textsf{oriented symmetric half-surface} (or simply \textsf{oriented sh-surface}) 
is a pair $(\Si,c)$ consisting of
an oriented bordered smooth surface~$\Si$ and a boundary involution~$c$ on~$\Si$. 
Such a pair doubles to a symmetric surface~$(\hat\Si,\hat{c})$; see~\eref{dblSi_e}.
We denote by $\cJ_{\Si}$ the space of all complex structures on~$\Si$ compatible
with the orientation 
and by $\cJ_c$ the subspace of $\cJ_{\Si}$ consisting of the complex structures~$\fJ$
so that $c$ is real-analytic with respect to~$\fJ$; see Section~\ref{double_sec}.
In the standard case of open Gromov-Witten theory, $c\!=\!\id_{\prt\Si}$ and 
$\cJ_c\!=\!\cJ_{\Si}$.\\

\noindent
A \textsf{real Cauchy-Riemann operator on a real bundle pair $(V,\ti{c})\!\lra\!(\Si,c)$},  
where $(\Si,c)$ is an oriented sh-surface, is a linear map of the~form
\BE{CRdfn_e}\begin{split}
D=\bp\!+\!A\!: \Ga(\Si;V)^{\ti{c}}
\equiv&\big\{\xi\!\in\!\Ga(\Si;V)\!:\,\xi\!\circ\!c\!=\!\ti{c}\!\circ\!\xi|_{\prt\Si}\big\}\\
&\hspace{.5in}\lra
\Ga_{\fJ}^{0,1}(\Si;V)\equiv\Ga\big(\Si;(T^*\Si,\fJ)^{0,1}\!\otimes_{\C}\!V\big),
\end{split}\EE
where $\bp$ is the holomorphic $\bp$-operator for some $\fJ\!\in\!\cJ_{\Si}$
and a holomorphic structure in~$V$ and  
$$A\in\Ga\big(\Si;\Hom_{\R}(V,(T^*\Si,\fJ)^{0,1}\!\otimes_{\C}\!V) \big)$$ 
is a zeroth-order deformation term. 
A real Cauchy-Riemann operator on a real bundle pair
need not be Fredholm in the appropriate completions;
see Remark~\ref{notFred_rem}.
However, it is Fredholm if the boundary involution~$c$ is real-analytic with respect 
to~$\fJ$; see Proposition~\ref{etaFred_prp}.\\

\noindent
Let $\bI=[0,1]$. 
Given an orientation-preserving diffeomorphism $\psi\!:\Si\!\lra\!\Si$, let
$$(M_{\psi},\prt M_{\psi})=\big(\bI\!\times\!(\Si,\prt\Si_{\psi})\big)/(1,x)\sim(0,\psi(x))$$
be the mapping torus of $\psi$ and $\pi\!: M_{\psi}\!\lra\!S^1$ be the projection map.
For each $t\!\in\!S^1$, let $\Si_t=\pi^{-1}(t)$ be the fiber over~$t$.
An involution~$c$ on~$\prt\Si$ commuting with~$\psi$ induces a fiber-preserving involution
on $\prt M_{\psi}$, which we continue to denote by~$c$.
In such a case, a continuous family of real Cauchy-Riemann operators on 
a real bundle pair $(V,\ti{c})$ over~$(M_{\psi},c)$ is a collection of real Cauchy-Riemann operators
$$D_t\!: \Ga(\Si_t;V|_{\Si_t})^{\ti{c}}\lra  \Ga_{\fJ_t}^{0,1}(\Si_t;V|_{\Si_t})$$
which varies continuously with $t\!\in\!S^1$. 
If $\fJ_t\!\in\!\cJ_c$, so that $D_t$ is Fredholm,
we denote by $\det D\lra\!S^1$ the determinant line bundle corresponding to this family; 
see \cite[Section~A.2]{MS} and \cite{detLB} for a construction.

\begin{thm}\label{main_thm}
Let $(\Si,c)$ be an oriented sh-surface, 
$\psi\!:\Si\!\lra\!\Si$ be a diffeomorphism preserving the orientation and each boundary component
and commuting with~$c$ when restricted to~$\prt\Si$,
and $(V,\ti{c})$ be a real bundle pair over~$(M_{\psi},c)$.
For each boundary component  $(\prt\Si)_i$ of~$\Si$,
choose a section $\al_i$ of 
$$(\prt M_{\psi})_i\equiv M_{\psi}|_{(\prt\Si)_i}\lra S^1.$$ 
If $D$ is a continuous family of real Cauchy-Riemann operators on $(V,\ti{c})$
such that each $D_t$ is compatible with some $\fJ_t\!\in\!\cJ_c$, then
\BE{mainthm_e}\begin{split}
\blr{w_1(\det D), S^1}&= \sum_{|c_i|=0}\Big(\!\big(\blr{w_1(V^{\ti{c}}),(\prt\Si)_i}+1\big)
\lr{w_1(V^{\ti{c}}),[\al_i]}+ \blr{w_2(V^{\ti{c}}),(\partial M_\psi)_i}\!\Big)\\
&\quad+\sum_{|c_i|=1}  \blr{w_2^{\La_{\C}^{\top}\ti{c}}(\La_{\C}^{\top}V),[(\prt M_{\psi})_i]^c},
\end{split}\EE
where the sums are taken over 
the connected components $(\prt M_{\psi})_i$ of~$\prt M_{\psi}$.
\end{thm}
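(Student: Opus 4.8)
The plan is to reduce~\eref{mainthm_e} to a short list of explicit model computations via homotopy invariance and additivity, then to evaluate the two types of boundary contributions separately, importing the computation of~\cite{Ge} for the standard boundary components and the reinterpretation, outlined after~\eref{etadfn_e}, of Teh's construction for the crosscaps. By homotopy invariance of the determinant line bundle --- the operators~$D_t$ being Fredholm by Proposition~\ref{etaFred_prp} --- the number $\blr{w_1(\det D),S^1}$ depends only on the isomorphism class of the real bundle pair $(V,\ti{c})\lra(M_\psi,c)$, so we may replace $(V,\ti{c})$ by any isomorphic pair and~$D$ by any compatible family; the right-hand side of~\eref{mainthm_e} manifestly has the same invariance. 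Both sides are moreover additive under $(V,\ti{c})\!=\!(V',\ti{c}')\!\oplus\!(V'',\ti{c}'')$: the left side because $\det(D'\!\oplus\!D'')\!\cong\!\det D'\!\otimes\!\det D''$ and $w_1$ of a tensor product of real line bundles is the sum of the $w_1$'s; the right side by the Whitney formula, after using that each $(\prt M_\psi)_i$ is a $2$-torus (a circle bundle over~$S^1$ with orientation-preserving monodromy), that $w_2$ of a real line bundle vanishes, and that the first equivariant Stiefel--Whitney class of a complex line bundle carrying a conjugation over a free involution is the bundle-independent class of the corresponding double cover, whose square vanishes on a torus --- so every Whitney cross term cancels.

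Since $M_\psi$ has the homotopy type of a $2$-complex, every complex bundle over it splits as a sum of line bundles; consequently $(V,\ti{c})$ differs from a direct sum of rank-$1$ real bundle pairs only in the homotopy type of the conjugation over $\prt M_\psi$, and analyzing that discrepancy component by component reduces~\eref{mainthm_e} to a bounded list of models: rank-$1$ real bundle pairs over~$(M_\psi,c)$ and a few ``exotic'' pairs supported near a single boundary component, realizing the unique nonsplit rank-$2$ real vector bundle on the relevant torus. For a rank-$1$ pair $(L,\ti{c})$ the right-hand side of~\eref{mainthm_e} reduces to
\[
\sum_{|c_i|=0}\big(\blr{w_1(L^{\ti{c}}),(\prt\Si)_i}+1\big)\lr{w_1(L^{\ti{c}}),[\al_i]}
+\sum_{|c_i|=1}\blr{w_2^{\ti{c}}(L),[(\prt M_\psi)_i]^c}.
\]

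In each model I would compute $\blr{w_1(\det D),S^1}$ by using that orientations of $\det D$ are defined by comparison with a reference real Cauchy-Riemann operator on a reference bundle pair carrying a canonical orientation --- this is exactly what the orienting constructions recalled in the introduction supply --- so that $\blr{w_1(\det D),S^1}$ is the obstruction to propagating such a comparison around~$S^1$. Cutting $\Si$ along embedded circles into a collar of each boundary component and a complementary piece~$\Si_0$, and applying the gluing formula for determinant line bundles, one arranges the bundle pair to be trivial over~$\Si_0$; then the $\Si_0$-part of the family is, up to homotopy, constant and contributes nothing, and the obstruction becomes a sum of local terms, one per boundary component, each determined by the restriction of $(V,\ti{c})$ and of~$\psi$ to a collar of~$(\prt\Si)_i$. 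For a standard boundary component ($|c_i|\!=\!0$) the boundary involution is trivial, the reference construction is that of~\cite[Proposition~8.1.4]{FOOO}, and the local term is the $S^1$-monodromy of the induced (relative) pin structure on $V^{\ti{c}}\lra(\prt\Si)_i$; tracking this sign is the computation already performed in~\cite{Ge} and yields the first sum of~\eref{mainthm_e}, the exotic rank-$2$ model accounting for the $\blr{w_2(V^{\ti{c}}),(\prt M_\psi)_i}$ term --- this part I would import.

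For a crosscap ($|c_i|\!=\!1$) the fixed locus of~$c$ is empty, so there is no real subbundle on which to place a pin structure; instead the reference orientation is the one furnished by Teh's construction for~$(\P^1,\eta)$, reinterpreted as in the introduction via a real Cauchy-Riemann operator with the orientation-preserving boundary involution~$\fa$, and the local obstruction is most naturally expressed $\Z_2$-equivariantly. In the model, $(\prt M_\psi)_i$ with its free $\Z_2$-action is equivariantly $(S^1\!\times\!S^1,\id\!\times\!\fa)$, so $H^2_c((\prt M_\psi)_i)$ is the ordinary $H^2$ of the quotient torus; comparing the reference orientation with its $\psi$-monodromy identifies the local term as $\blr{w_2^{\La_{\C}^{\top}\ti{c}}(\La_{\C}^{\top}V),[(\prt M_\psi)_i]^c}$, the top exterior power entering because the rank reduction leaves only the induced conjugation on $\La_{\C}^{\top}V$ visible, and one checks separately that the exotic rank-$2$ crosscap model contributes zero. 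The main obstacle is precisely this: turning Teh's genus-$0$ recipe into a $\Z_2$-equivariant comparison whose obstruction is the stated equivariant characteristic number, and verifying that the collar localization is genuinely clean --- that~\eref{mainthm_e} carries no cross terms between distinct boundary components. This is where the principles of~\cite{Ge}, which localize $w_1(\det D)$ to neighborhoods of~$\prt\Si$, have to be fused with the equivariant bookkeeping.
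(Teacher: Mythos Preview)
Your overall strategy—localize $w_1(\det D)$ to neighborhoods of the boundary components, import~\cite{Ge} for the standard ones, and treat the crosscaps equivariantly—matches the paper's. But your execution diverges from the paper's in a way that introduces real gaps.

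The paper never reduces rank and never trivializes the bundle over an interior piece. Instead it first deforms~$\psi$ (Lemma~\ref{def_lm}) to be the identity near~$\prt\Si$, so each crosscap collar in~$M_\psi$ is a product $S^1\times(\prt\Si)_i\times[0,2\eps]$. It then pinches every fiber~$\Si_t$ along a circle parallel to each \emph{crosscap} boundary only, producing a nodal family whose normalization is a surface~$\Si'$ carrying exactly the boundary components with $|c_i|=0$, together with one disk $(D^2_i,c_i)$ per crosscap. The gluing isomorphism~\eref{dets} gives
\[
\det D \;\approx\; (\det D')\otimes \bigotimes_{|c_i|=1}\big((\det D_i)\otimes\La^{\top}_{\R}(V'|_{(t,p_i)})\big),
\]
with the last factor canonically oriented by its complex structure. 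The term $w_1(\det D')$ is then read off \emph{wholesale} from \cite[Theorem~1.1]{Ge}, which already handles arbitrary rank, arbitrary~$\psi$, and arbitrary $(V',\ti c')$ over~$\Si'$; this yields the first sum in~\eref{mainthm_e} with no further work. Each crosscap contribution $w_1(\det D_i)$ is handled by Lemma~\ref{eta_lem}: over $S^1\times(D^2,c_i)$ with fiberwise Maslov index~$0$ there are exactly two isomorphism classes of rank~$n$ real bundle pairs (by \cite[Lemma~2.2]{Teh}), the standard $\dbar$-operator has one-dimensional kernel per $\C$-summand (Proposition~\ref{etaFred_prp}), and Lemmas~\ref{T2equivL_lmm} and~\ref{Latop_lmm} show the two classes are distinguished simultaneously by the orientability of $\det D_i$ and by $w_2^{\La_\C^{\top}\ti c}(\La_\C^{\top}V)$.

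Your route has two genuine gaps. First, while $V$ splits as a complex bundle over the $2$-complex~$M_\psi$, there is no reason the conjugation~$\ti c$ splits compatibly; the passage from ``$V$ is a sum of line bundles'' to ``$(V,\ti c)$ is a sum of rank-$1$ real bundle pairs up to an adjustment near~$\prt M_\psi$'' is not justified, and the resulting zoo of ``exotic'' models is never pinned down. Second, the step ``arrange the bundle pair to be trivial over~$\Si_0$'' cannot be carried out in a family over~$S^1$ in general: the restriction of~$V$ to the $\Si_0$-part of~$M_\psi$ need not be trivial (indeed, its nontriviality is exactly what \cite[Theorem~1.1]{Ge} accounts for). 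The paper sidesteps both issues by leaving the entire standard-boundary part, bundle and monodromy included, to~\cite{Ge}, and by treating the crosscap disk at full rank via the two-class classification rather than by rank reduction.
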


\noindent
This theorem extends \cite[Theorem~1.1]{Ge} to bordered Riemann surfaces with crosscaps
and is the key step to the remaining results in this paper,
analogously to \cite[Theorem~1.1]{Ge} being the key step to the remaining results in~\cite{Ge}.
By Lemma~\ref{Latop_lmm}, $w_2^{\La_{\C}^{\top}\ti{c}}(\La_{\C}^{\top}V)$ 
in~\eref{mainthm_e} can be replaced by the simpler looking~$w_2^{\ti{c}}(V)$.
However, as \cite[Section~2.1]{Teh} suggests, $\La_{\C}^{\top}(V,\ti{c})$
is in fact the simpler object to work with.
By Proposition~\ref{tensor_prp},
\BE{w2sum_e}
w_2^{\La_{\C}^{\top}(\ti\phi_1\oplus\ti\phi_2)}\big(\La_{\C}^{\top}(V_1\!\oplus\!V_2)\big)
=w_2^{\La_{\C}^{\top}\ti\phi_1}(\La_{\C}^{\top}V_1)
+w_2^{\La_{\C}^{\top}\ti\phi_2}(\La_{\C}^{\top}V_2)\EE
for all real bundle pairs $(V_1,\ti\phi_1),(V_2,\ti\phi_2)\!\lra\!(M,\phi)$, but 
in general
$$w_2^{\ti\phi_1\oplus\ti\phi_2}(V_1\!\oplus\!V_2)
\neq w_2^{\ti\phi_1}(V_1) +w_2^{\ti\phi_2}(V_2).$$
The last equality fails even for the trivial rank~1 real bundle pairs 
$(V_i,\ti\phi_i)\!\lra\!(M,\phi)$ over the Klein bottle
with a natural fixed-point-free involution.
We show in~\cite{GZ} that the equivariant~$w_2$ of $\La_{\C}^{\top}(V,\ti\phi)$,
and not of~$(V,\ti\phi)$ itself, enters into orientability considerations in real Gromov-Witten
theory (i.e.~when interchanges of halves are considered).\\
 
\noindent
We prove Theorem~\ref{main_thm} in Section~\ref{mainpf_sec} by separating off the contributions
of the individual crosscaps, following one of the principles in the proof of \cite[Theorem~1.1]{Ge};
the contribution from the remainder of~$\Si$ is then given by \cite[Theorem~1.1]{Ge}.
We determine the contributions from the crosscaps in Section~\ref{mainpf_sec} by
combining some of the ideas in the proof of \cite[Theorem~1.1]{Ge} 
with equivariant cohomology.

\begin{remark}\label{fam_rem}
Families of real Cauchy-Riemann operators often arise by pulling back data from
a target manifold by smooth maps as follows. 
Suppose $(X,J)$ is an almost complex manifold with an anti-complex  involution
$\phi\!:X\!\lra\!X$ and $(V,\ti\phi)\!\lra\!(X,\phi)$ is a real bundle pair.
Let $\na$ be a connection in $V$ and 
$$A\in\Ga\big(X;\Hom_{\R}(V,(T^*X,J)^{0,1}\otimes_{\C}\!V)\big).$$ 
For any map $u:\Si\!\lra\!X$ and $\fJ\!\in\!\cJ_{\Si}$, 
let $\na^u$ denote the induced connection in $u^*V$ and
$$ A_{\fJ;u}=A\circ \prt_{\fJ} u\in\Ga(\Si;
\Hom_{\R}(u^*V,(T^*\Si,\fJ)^{0,1}\otimes_{\C}u^*V)\big).$$
If $c$ is a boundary involution on $\Si$ and $u\!\circ\!c\!=\!\phi\!\circ\!u$ on $\prt\Si$, 
the homomorphisms
$$\bp_u^\na =\frac{1}{2}(\na^u+\fI\circ\na^u\circ\fJ), \,\,
D_u\equiv \bp_u^\na\!+\!A_{\fJ;u}\!: \Ga(\Si;u^*V)^{u^*\ti\phi}\lra
\Ga^{0,1}_{\fJ}(\Si;u^*V)$$
are real Cauchy-Riemann operators on $(u^*V,u^*\ti\phi)\!\lra\!(\Si,c)$
that form families of real Cauchy-Riemann operators over families of maps.
\end{remark}

\noindent
The \textsf{double} of an oriented sh-surface $(\Si,c)$ is the closed oriented topological surface
\BE{dblSi_e}\hat\Si\equiv\big(\Si^+\sqcup\Si^-\big)\big/\!\sim~
\equiv\{+,-\}\!\times\!\Si\big/\!\sim,\qquad
(+,z)\sim\big(-,c(z)\big)~~~\forall\,z\!\in\!\prt\Si.\EE
The involution~$c$ on $\prt\Si$ naturally extends to the involution
$$\hat{c}\!:\hat\Si\lra\hat\Si, \qquad [\pm,z]\lra[\mp,z]\quad\forall\,z\!\in\!\Si.$$
Similarly, if $(X,\phi)$ is a manifold with an involution, 
a map $u\!:\Si\!\lra\!X$ such that $u\!\circ\!c\!=\!\phi\!\circ\!u$ on~$\prt\Si$
\textsf{doubles} to a map $\hat{u}\!:\hat\Si\!\lra\!X$ such that 
$\hat{u}\!\circ\!\hat{c}\!=\!\phi\!\circ\!\hat{u}$.
A complex structure~$\fJ$ on $\Si^+\!=\!\Si$ extends to a complex structure~$\hat\fJ$ 
on~$\hat\Si$ so that $\hat{c}^*\hat\fJ\!=\!-\hat\fJ$ if and only~if
$c\!:\prt\Si\!\lra\!\prt\Si$ is real-analytic with respect to~$\fJ$;
see Corollary~\ref{dblJ_cor}.
If $c$ is real-analytic with respect to~$\fJ$, $J$ is an almost complex structure  on~$X$ 
such that $\phi^*J\!=\!-J$,  and $u$ as above is $(J,\fJ)$-holomorphic, 
then $\hat{u}$ is $(J,\hat\fJ)$-holomorphic.

\begin{remark}\label{dbldfn_e}
We note that \eref{dblSi_e} does not specify a smooth structure on~$\hat\Si$
across the boundary of~$\Si$.
Whenever $c$ is real-analytic with respect to~$\fJ$, there is a natural doubled 
smooth complex structure~$\hat\fJ$ so that the image of $\prt\Si$ in~$\Si$ 
is a real-analytic curve; see~(1) in the proof of Corollary~\ref{dblJ_cor}.
However, there can be other smooth and complex structures on~$\hat\Si$ that
are compatible with~$\hat{c}$ and restrict to~$\fJ$ on~$\Si$; see Remark~\ref{dbl_rem}.
\end{remark}

\noindent
Let $(\Si,c)$ be a genus~$g$ oriented sh-surface with orderings 
$$(\prt\Si)_1,\ldots,(\prt\Si)_{|c|_0} \qquad\hbox{and}\qquad
(\prt\Si)_{|c|_0+1},\ldots,(\prt\Si)_{|c|_0+|c|_1}$$
of the boundary components with $|c_i|\!=\!0$ and with $|c_i|\!=\!1$, respectively.
Denote by $\cD_c$ the group of diffeomorphisms of~$\Si$ preserving the orientation and each
boundary component and commuting with the involution~$c$ on~$\prt\Si$.
If $(X,\phi)$ is a smooth manifold with an involution~and 
\BE{btuple_eq}
\b=(B,b_1,\ldots,b_{|c|_0+|c|_1})
\in H_2(X;\Z)\oplus H_1(X^{\phi};\Z)^{\oplus |c|_0}\oplus 
H_1^{\phi}(X;\Z)^{\oplus |c|_1}, \EE
let $\fB_g(X,\b)^{\phi,c}$ denote the space of maps 
$u\!:\Si\!\lra\!X$ such that 
\begin{enumerate}[label=$\bullet$,leftmargin=*]
\item $u\!\circ\!c\!=\!\phi\!\circ\!u$ on $\prt\Si$,
\item $\hat{u}_*[\hat\Si]=B$, $u_*[(\prt\Si)_i]=b_i$ for $i=1,\ldots,|c|_0$, and
\item $[u|_{(\prt\Si)_i}]^{c_i}=b_i$ for $i=|c|_0\!+\!1,\ldots,|c|_0\!+\!|c|_1$,
where $[u|_{(\prt\Si)_i}]^{c_i}$ is the equivariant pushforward 
of $[(\prt\Si)_i]^{c_i}$  by~$u|_{(\prt\Si)_i}$.
\end{enumerate}
We define
$$\cH_g(X,\b)^{\phi,c}=
\big(\fB_g(X,\b)^{\phi,c}\times\cJ_c\big)/\cD_c.$$
By Lemma~\ref{cDcJ_lem}, the action of $\cD_c$ on $\cJ_{\Si}$ given by $h\cdot\fJ=h^*\fJ$
preserves~$\cJ_c$; so, the above quotient is well-defined.

\begin{remark}\label{equivH1_rem} 
As discussed in detail at the beginning of Section~\ref{equivappl_subs}, 
$H_1^{\phi}(X;\Z)$ provides a finer
invariant than $H_1(X;\Z)$.
\end{remark}

\begin{remark}\label{mfld_rem} 
For simplicity, we will assume that the action of $\cD_c$ has no fixed points on the relevant
subspaces of $\fB_g(X,\b)^{\phi,c}\!\times\!\cJ_c$.
This happens for example if sufficiently many marked points are added to~$\Si$.
In applications to more general cases,  this issue can be avoided by working with 
Prym structures on Riemann surfaces; see~\cite{Loo}. 
\end{remark}

\noindent
The determinant line bundle of a family of real Cauchy-Riemann operators
$D_{(V,\ti\phi)}$ on 
$$\fB_g(X,\b)^{\phi,c}\times\cJ_c$$ 
induced by a real bundle pair as in Remark~\ref{fam_rem} descends 
to a line bundle over $\cH_g(X,\b)^{\phi,c}$, which we still denote by $\det D_{(V,\ti\phi)}$.
As a direct corollary of Theorem~\ref{main_thm}, we obtain the following result
on its orientability.

\begin{cor}\label{orient_cor} 
Let $\gm$ be a loop in $\cH_g(X,\b)^{\phi,c}$ and $\wt\gm$ be a path in
$\fB_g(X,\b)^{\phi,c}\!\times\!\cJ_c$ lifting $\gm$ such that
$\wt\gm_1=\psi\cdot\wt\gm_0$ for some $\psi\!\in\!\cD_c$ with $\psi|_{\prt\Si}\!=\!\id$.
For each boundary component $(\prt\Si)_i$ of~$\Si$, denote by 
$\al_i\!:S^1\!\lra\!X$ and $\be_i\!:S^1\!\times\!(\prt\Si)_i\!\lra\!X$ 
the paths traced by a fixed point on $(\prt\Si)_i$ and
by the entire boundary component~$(\prt\Si)_i$ along~$\wt\gm$. 
Then,
\BE{orient_eq}\begin{split}
\lr{w_1(\det D_{(V,\ti\phi)}),\gm}&= 
\sum_{i=1}^{|c|_0}\Big(\big(\blr{w_1(V^{\ti\phi}),b_i}+1\big)
\lr{w_1(V^{\ti\phi}),[\al_i]}+\blr{w_2(V^{\ti\phi}),[\be_i]}\Big)\\
&\quad+\sum_{i=|c|_0+1}^{|c|_0+|c|_1} \!\!
\blr{w_2^{\La_{\C}^{\top}{\ti\phi}}(\La_{\C}^{\top}V),[\be_i]^{\id\times c_i}},
\end{split}\EE
where $[\be_i]^{\id\times c_i}\!\in\!H_2^{\phi}(X)$ is the equivariant push-forward of
$[S^1\!\times\!(\prt\Si)_i]^{\id\times c_i}$ by~$\be_i$.
\end{cor}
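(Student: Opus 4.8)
The plan is to deduce Corollary~\ref{orient_cor} from Theorem~\ref{main_thm} by a direct translation of data: a loop $\gm$ in $\cH_g(X,\b)^{\phi,c}$, once we have chosen the lift $\wt\gm$ with $\wt\gm_1=\psi\cdot\wt\gm_0$, produces a mapping torus $M_\psi$ of the fixed domain $\Si$ together with a family of real Cauchy--Riemann operators on it. First I would make this explicit: the path $t\mapsto\wt\gm_t=(u_t,\fJ_t)$ gives a map $\bI\times\Si\to X$ and a family of complex structures $\fJ_t\in\cJ_c$, and the relation $\wt\gm_1=\psi\cdot\wt\gm_0$ (i.e. $u_1=u_0\circ\psi$, $\fJ_1=\psi^*\fJ_0$) is exactly the gluing condition needed to descend everything to the mapping torus $M_\psi=\bI\times\Si/(1,x)\sim(0,\psi(x))$. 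Because $\psi|_{\prt\Si}=\id$, the restriction of $\psi$ to $\prt\Si$ trivially commutes with $c$, so $c$ induces a fiber-preserving involution on $\prt M_\psi$ and the boundary components $(\prt M_\psi)_i$ are honestly $S^1\times(\prt\Si)_i$.

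Next I would pull back the target data of Remark~\ref{fam_rem}: the universal map $\ev\colon M_\psi\to X$ (well-defined since $\psi|_{\prt\Si}=\id$ makes the boundary of the mapping torus a product, and on the interior we use that $u_1=u_0\circ\psi$) pulls $(V,\ti\phi)\to(X,\phi)$ back to a real bundle pair over $(M_\psi,c)$, and the connection $\na$ and deformation term $A$ pull back to a continuous family $D$ of real Cauchy--Riemann operators with each $D_t$ compatible with $\fJ_t\in\cJ_c$. By the construction of the descended determinant line bundle, $\det D_{(V,\ti\phi)}$ restricted to $\gm$ is canonically isomorphic to the bundle $\det D\to S^1$ of Theorem~\ref{main_thm}, so $\lr{w_1(\det D_{(V,\ti\phi)}),\gm}=\blr{w_1(\det D),S^1}$. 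It then remains to identify each term on the right-hand side of \eref{mainthm_e} with the corresponding term of \eref{orient_eq}. For a boundary component with $|c_i|=0$: choosing $\al_i$ in Theorem~\ref{main_thm} to be the section of $(\prt M_\psi)_i\to S^1$ traced by a fixed point of $(\prt\Si)_i$ (recall $\psi|_{\prt\Si}=\id$, so a fixed point is literally fixed), the map $\ev$ sends this section to the loop $\al_i\colon S^1\to X$ of the statement and sends the fiber class $(\prt\Si)_i$ to $b_i=u_*[(\prt\Si)_i]$; hence $\ev^*V^{\ti\phi}=(\ev^*V)^{\ev^*\ti\phi}$ gives $\blr{w_1(V^{\ti\phi}),(\prt\Si)_i}=\blr{w_1(V^{\ti\phi}),b_i}$, $\lr{w_1(V^{\ti\phi}),[\al_i]}$ matches, and $\blr{w_2(V^{\ti\phi}),(\prt M_\psi)_i}=\blr{w_2(V^{\ti\phi}),[\be_i]}$ by naturality, where $\be_i=\ev|_{(\prt M_\psi)_i}\colon S^1\times(\prt\Si)_i\to X$. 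For a crosscap component with $|c_i|=1$: the equivariant class $[(\prt M_\psi)_i]^c\in H_2^c(\prt M_\psi)$ pushes forward under the equivariant map $\ev$ (which intertwines $c$ and $\phi$ on the boundary) to $[\be_i]^{\id\times c_i}\in H_2^\phi(X)$, and equivariant naturality of the equivariant Stiefel--Whitney class of $\La_\C^{\top}$ gives $\blr{w_2^{\La_\C^{\top}\ti c}(\La_\C^{\top}V),[(\prt M_\psi)_i]^c}=\blr{w_2^{\La_\C^{\top}\ti\phi}(\La_\C^{\top}V),[\be_i]^{\id\times c_i}}$.

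The main obstacle I anticipate is not any single estimate but the bookkeeping of the descent and naturality: one must check carefully that the isomorphism class of $\det D_{(V,\ti\phi)}|_\gm$ is independent of the chosen lift $\wt\gm$ (different lifts differ by an element of $\cD_c$ acting trivially on $\prt\Si$, and one needs the right-hand side of \eref{orient_eq} to be likewise lift-independent — this is forced because all the classes appearing are pulled back from $X$ and $X^\phi$ via data that only depends on $\gm$ up to homotopy), and that the equivariant push-forward $[\be_i]^{\id\times c_i}$ is well-defined, i.e. that $[S^1\times(\prt\Si)_i]^{\id\times c_i}$ is a legitimate equivariant fundamental class for the $\Z_2$-action $\id\times c_i$ on the torus (this uses that $\id\times c_i$ is free when $|c_i|=1$, since $c_i=\fa$ is the free antipodal map). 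Once these compatibilities are in place, the corollary is a line-by-line match with Theorem~\ref{main_thm}, so I would keep the proof short: set up the mapping torus, invoke Remark~\ref{fam_rem} to get the family $D$, apply Theorem~\ref{main_thm}, and translate each term via naturality of (equivariant) characteristic classes.
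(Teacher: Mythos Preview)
Your proposal is correct and matches the paper's approach: the paper states Corollary~\ref{orient_cor} as a direct consequence of Theorem~\ref{main_thm} without an explicit proof, and your argument---form the mapping torus from the lift~$\wt\gm$, pull back $(V,\ti\phi)$ via the evaluation map $M_\psi\!\to\!X$, apply Theorem~\ref{main_thm}, and identify each term using naturality of (equivariant) Stiefel--Whitney classes together with $\psi|_{\prt\Si}\!=\!\id$ to trivialize $\prt M_\psi$---is precisely the intended translation.
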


\noindent
The first assumption in this corollary imposes no restriction on~$\gm$;
see Lemma~\ref{lift_cor}.
Lemma~\ref{w2equiv_lmm} simplifies the computation of the second line 
in~\eref{orient_eq} in some cases.
In particular, combining Corollary~\ref{orient_cor} and Lemma~\ref{w2equiv_lmm},
we obtain Corollary~\ref{etaorient_cor} below, which concerns the orientability
problem for families of real Cauchy-Riemann operators over
surfaces without standard boundary components (i.e.~with crosscaps only).\\

\noindent
If $(V,\ti\phi)\!\lra\!(X,\phi)$ is a rank~1 real  bundle pair,
a \textsf{real square root for $(V,\ti\phi)$} is 
a rank~1 real bundle pair $(L,\ti\phi')\!\lra\!(X,\phi)$ and an isomorphism
$$(V,\ti\phi)\approx (L,\ti\phi')^{\otimes2}$$
of real bundle pairs.
As shown in \cite[Section 2.1]{Teh}, real square roots canonically 
induce orientations on the determinant lines of real Cauchy-Riemann operators
over disks with crosscaps.
Thus, Corollary~\ref{etaorient_cor} explains and extends this key observation
in~\cite{Teh}.

\begin{cor}\label{etaorient_cor}
Let $(X,\phi)$ be a manifold with an involution, $(V,\ti\phi)\!\lra\!(X,\phi)$
be a real bundle pair,
and $(\Si,c)$ be an oriented sh-surface with $|c|_0\!=\!0$.
If $\pi_1(X)\!=\!0$ and $c_1(V)$ is an even class or 
$\La^{\top}_{\C}(V,\ti\phi)$ admits a real square root, then
the determinant line bundle 
$$\det D_{V,\ti\phi} \lra \cH_g(X,\b)^{\phi,c}$$
is orientable.
\end{cor}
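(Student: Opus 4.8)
The plan is to deduce Corollary~\ref{etaorient_cor} from Corollary~\ref{orient_cor} by showing that, under either hypothesis, the right-hand side of~\eref{orient_eq} vanishes for every loop $\gm$ in $\cH_g(X,\b)^{\phi,c}$. Since $|c|_0\!=\!0$, the first sum in~\eref{orient_eq} is empty, so the entire obstruction class is
$$\sum_{i=|c|_0+1}^{|c|_0+|c|_1} \blr{w_2^{\La_{\C}^{\top}{\ti\phi}}(\La_{\C}^{\top}V),[\be_i]^{\id\times c_i}},$$
a sum over crosscaps of equivariant $w_2$-pairings of $\La_{\C}^{\top}(V,\ti\phi)$. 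Thus it suffices to prove that $w_2^{\La_{\C}^{\top}\ti\phi}(\La_{\C}^{\top}V)\!\in\!H^2_\phi(X)$ itself vanishes under each hypothesis; then each term in the sum is zero regardless of~$\gm$.

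First I would handle the real square root case. If $\La_{\C}^{\top}(V,\ti\phi)\!\approx\!(L,\ti\phi')^{\otimes2}$, then by the additivity~\eref{w2sum_e} we have $w_2^{\La_{\C}^{\top}\ti\phi}(\La_{\C}^{\top}V)=2\,w_2^{\ti\phi'}(L)=0$ in $H^2_\phi(X;\Z_2)$, since $w_1$ of a rank~1 real bundle pair lives in $H^1_\phi(X;\Z_2)$ and the relevant equivariant Stiefel–Whitney class is computed with $\Z_2$-coefficients; more directly, the equivariant first Chern class of a tensor square is even, and $w_2^{\La_{\C}^{\top}\ti\phi}(\La_{\C}^{\top}V)$ is (up to a universal transgression governed by the orientation-bundle formalism of \cite{Teh} and Lemma~\ref{Latop_lmm}) the mod~$2$ reduction of $c_1^{\ti\phi}(\La_{\C}^{\top}V)$ — I would invoke whichever of the paper's earlier lemmas makes this precise (the discussion around Lemma~\ref{Latop_lmm} and~\eref{w2sum_e}). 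Second, for the case $\pi_1(X)\!=\!0$ and $c_1(V)$ even: since $X$ is simply connected, the Leray–Serre / Borel spectral sequence for $X\to X_{h\Z_2}\to B\Z_2$ has $H^1(X)=0$, so the equivariant cohomology $H^2_\phi(X;\Z_2)$ fits in a short exact sequence whose pieces are $H^2(B\Z_2;\Z_2)$, an extension class, and $H^2(X;\Z_2)$; the point is that $w_2^{\La_{\C}^{\top}\ti\phi}(\La_{\C}^{\top}V)$ maps to $w_2(\La_{\C}^{\top}V)\equiv c_1(V) \bmod 2$ in $H^2(X;\Z_2)$, which vanishes by the evenness hypothesis, and its image in the $B\Z_2$-direction also vanishes because a rank~1 complex bundle pair has no equivariant obstruction in that slot once the underlying $w_1$-contribution is killed — here I would use Lemma~\ref{w2equiv_lmm}, which the excerpt explicitly advertises as simplifying exactly this computation, to conclude $w_2^{\La_{\C}^{\top}\ti\phi}(\La_{\C}^{\top}V)=0$.

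The main obstacle I anticipate is the second case: ruling out a nontrivial contribution from the $H^*(B\Z_2)$-part of equivariant cohomology. Evenness of $c_1(V)$ only controls the non-equivariant reduction $w_2(\La_{\C}^{\top}V)\in H^2(X;\Z_2)$; one must still show there is no "equivariant correction" supported on the classifying space factor. This is where the structure of $\La_{\C}^{\top}(V,\ti\phi)$ as a \emph{real} bundle pair (as opposed to an arbitrary $\Z_2$-equivariant bundle) is essential: a conjugation lifting~$\phi$ trivializes the would-be equivariant $w_1$ over~$X^\phi$ in a way that, combined with $\pi_1(X)=0$, forces the transgression to vanish. I expect the clean way to organize this is exactly Lemma~\ref{w2equiv_lmm} together with the observation (from the $\eta$-type analysis in~\cite{Teh} and Corollary~\ref{etaorient_cor2}) that for a fixed-point-free component of~$c$ the relevant equivariant class on $S^1\!\times\!(\prt\Si)_i$ is pulled back from $X_{h\Z_2}$ and detected by $c_1^{\ti\phi}$ alone. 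Once that reduction is in place, the argument is purely formal: both hypotheses kill $c_1^{\ti\phi}(\La_{\C}^{\top}V)\bmod 2$, hence kill every summand in~\eref{orient_eq}, hence $w_1(\det D_{V,\ti\phi})$ pairs trivially with every loop, so $\det D_{V,\ti\phi}\to\cH_g(X,\b)^{\phi,c}$ is orientable.
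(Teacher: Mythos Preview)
Your overall reduction is the paper's own: apply Corollary~\ref{orient_cor}, note that $|c|_0=0$ kills the first sum, and show each crosscap term $\blr{w_2^{\La^{\top}_{\C}\ti\phi}(\La^{\top}_{\C}V),[\be_i]^{\id\times c_i}}$ vanishes. Your treatment of the real-square-root case via~\eref{w2sum_e} is also correct; the paper makes the same remark that the equivariant $w_2$ of the top exterior power genuinely vanishes in that case.

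The gap is in the second case. You set out to prove that $w_2^{\La^{\top}_{\C}\ti\phi}(\La^{\top}_{\C}V)=0$ in $H^2_\phi(X)$ whenever $\pi_1(X)=0$ and $c_1(V)$ is even, and you rightly worry about the $H^2(\bB\Z_2)$ piece in the Leray--Serre filtration. That worry is fatal to your plan: the paper states explicitly (in the paragraph following Corollary~\ref{etaorient_cor}) that under these hypotheses the equivariant $w_2$ is only a \emph{square} class in $H^1_\phi(X)^2$, ``but not necessarily the trivial one anymore.'' So the target you are aiming for is false in general, and no appeal to Lemma~\ref{w2equiv_lmm} will give it, because that lemma says nothing about the global class.

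What Lemma~\ref{w2equiv_lmm} actually gives, and what the paper uses, is the vanishing of each \emph{pairing}, not of the class. Its second statement says that when $\pi_1(X)=0$ the equivariant homology class $[\be_i]^{\id\times c_i}\in H_2^{\phi}(X)$ is the image of an ordinary spherical class $[\be_i']\in H_2(X)$ under the fiber inclusion $X\hookrightarrow\bB_{\phi}X$, so that
\[
\blr{w_2^{\ti\phi}(V),[\be_i]^{\id\times c_i}}=\blr{w_2(V),[\be_i']}.
\]
Since $w_2(V)\equiv c_1(V)\bmod 2=0$, each summand in~\eref{orient_eq} vanishes. No control over the $\bB\Z_2$-direction of the equivariant class is needed, because classes pushed in from the fiber do not see it. Equivalently (and this is the unifying viewpoint the paper emphasizes), a square class in $H^2$ pairs trivially with the fundamental class of the orientable torus $(\prt M_\psi)_i/\Z_2$, so even the possibly nonzero $\omega^2$ contribution is harmless at the level of~\eref{orient_eq}.
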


\noindent
By Corollary~\ref{SQrt_crl}, 
the two sets of completely different conditions in Corollary~\ref{etaorient_cor} 
are in fact two different  specializations of the natural vanishing
condition on the $w_2$-terms in~\eref{mainthm_e} and~\eref{orient_eq} that first 
came~up in the case $|c|_1\!=\!0$ in~\cite{Ge}: 
$w_2^{\La^{\top}_{\C}\ti\phi}(\La^{\top}_{\C}V)$ is a square of a class
in~$H^1_{\phi}(X)$.
Since there are no non-trivial square top cohomology classes on
a closed orientable surface, each summand on the second lines 
in~\eref{mainthm_e} and~\eref{orient_eq} vanishes if 
$w_2^{\La^{\top}_{\C}\ti\phi}(\La^{\top}_{\C}V)$ is a square class.
Thus, the two sets of conditions in Corollaries~\ref{etaorient_cor} 
and~\ref{etaorient_cor2} can be replaced by the single requirement 
that the equivariant $w_2$ is a square class.\\

\noindent
Let $\si$ be an orientation-reversing involution on a compact closed oriented genus~$g$ 
surface~$\hat\Si$.
Denote by $\cJ_{\si}$ and $\cD_{\si}$ the space of complex structures~$\hat\fJ$ on~$\hat\Si$
such that $\si^*\hat\fJ\!=\!-\hat\fJ$ and the group of orientation-preserving diffeomorphisms~$\psi$ 
of~$\hat\Si$  such that $\si\!\circ\!\psi\!=\!\psi\!\circ\!\si$, respectively.
Let $(X,\phi)$ be a smooth manifold with an involution and $J$ 
be an almost complex structure on $X$ such that $\phi^*J\!=\!-J$.
Of a particular interest in real Gromov-Witten theory is the problem of 
the orientability of the moduli spaces 
$$\mf_g(X,J,B)^{\phi,\si}\equiv
\big\{(\hat{u},\hat\fJ)\in C^{\i}(\hat\Si,X)\!\times\!\cJ_{\si}\!:~\hat{u}_*[\hat\Si]\!=\!B,~
\hat{u}\!\circ\!\si\!=\!\phi\!\circ\!\hat{u},~
\dbar_{J,\hat\fJ}\hat{u}\!=\!0\big\} \big/\cD_{\si},$$
where
\BE{dbar_e} \dbar_{J,\hat\fJ}\hat{u}=\frac12\big(\tnd\hat{u}+J\circ\tnd\hat{u}\circ\hat\fJ\big).\EE
This problem is closely related to the problem of orienting the index bundle for 
a family of real Cauchy-Riemann operators over $\mf_g(X,J,B)^{\phi,\si}$
induced by the bundle $(TX,\tnd\phi)\!\lra\!(X,\phi)$ as in Remark~\ref{fam_rem}.
Since $\hat\Si$ can be decomposed into two conjugate oriented sh-surfaces,
$(\Si,c)$ and~$(\bar\Si,c)$, 
the latter problem is in turn closely related to the orientability of 
$\det D_{(TX,\tnd\phi)}$ over the space of holomorphic maps from~$\Si$ to~$X$
that commute with the involutions on the boundary.\\

\noindent
Let $\eta\!:\P^1\!\lra\!\P^1$ be as in~\eref{etadfn_e}.
As shown in \cite[Section~2.3]{Teh}, the orientability problem for 
$\mf_0(X,J,B)^{\phi,\eta}$
is precisely equivalent to the orientability problem for $D_{(TX,\tnd\phi)}$ over
\begin{equation*}\begin{split}
\cP(X,J,B)&\equiv\big\{\hat{u}\!\in\!C^{\i}(\P^1,X)\!:~\hat{u}_*[\P^1]\!=\!B,~
\hat{u}\!\circ\!\eta\!=\!\phi\!\circ\!\hat{u},~\dbar_{J,\hat\fJ}\hat{u}\!=\!0\big\}\\
&\equiv\big\{u\!\in\!C^{\i}(D^2,X)\!:~\hat{u}_*[\P^1]\!=\!B,~
u\!\circ\!\fa|_{S^1}\!=\!\phi\!\circ\!u|_{S^1},
~\dbar_{J,\fJ}u\!=\!0\big\},
\end{split}\end{equation*}
where $\hat\fJ$ and $\fJ$ are the standard complex structures on $\P^1$ and 
the unit disk $D^2\!\subset\!\P^1$, respectively.
The reason is that 
$$\mf_0(X,J,B)^{\phi,\eta}=\cP(X,J,B)\big/\aut(\P^1,\eta)
\qquad\hbox{and}\quad \aut(\P^1,\eta)\approx \R\P^3.$$
Thus, the orientability of $\mf_0(X,J,B)^{\phi,\eta}$ along a loop $\gm$ 
in this moduli space
is described by the last term in~\eref{orient_eq} with $(V,\ti\phi)\!=\!(TX,\tnd\phi)$.
This allows us to immediately deduce the following conclusion about the orientability
of $\mf_0(X,J,B)^{\phi,\eta}$ from Corollary~\ref{etaorient_cor}.

\begin{cor}\label{etaorient_cor2}
Let $(X,\phi,J)$ be a manifold with an involution and an almost complex structure~$J$
such that $\phi^*J\!=\!-J$.
If $\pi_1(X)\!=\!0$ and $c_1(TX,J)$ is an even class or 
$\La^{\top}_{\C}(TX,\ti\phi)$ admits a real square root, 
the moduli space $\mf_0(X,J,B)^{\phi,\eta}$ is orientable
for every $B\!\in\!H_2(X;\Z)$.
\end{cor}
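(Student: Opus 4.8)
The plan is to deduce Corollary~\ref{etaorient_cor2} directly from Corollary~\ref{etaorient_cor} by specializing the latter to the case $(\Si,c)=(D^2,\fa|_{S^1})$, a disk with a single crosscap and no standard boundary components, so that $|c|_0=0$ and $|c|_1=1$, and to the real bundle pair $(V,\ti\phi)=(TX,\tnd\phi)$. First I would recall, from the discussion immediately preceding the statement, the identification
$$\mf_0(X,J,B)^{\phi,\eta}=\cP(X,J,B)\big/\aut(\P^1,\eta),\qquad \aut(\P^1,\eta)\approx\R\P^3,$$
together with the observation of \cite[Section~2.3]{Teh} that $\cP(X,J,B)$ is exactly the space of $(J,\fJ)$-holomorphic maps $u\!:D^2\!\lra\!X$ with $u\!\circ\!\fa|_{S^1}=\phi\!\circ\!u|_{S^1}$ and $\hat u_*[\P^1]=B$. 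In the notation of Corollary~\ref{etaorient_cor}, with $g=0$ and $\b$ having only the $B$-component (since $|c|_0=0$ and the single crosscap contributes the class $[u|_{S^1}]^{\fa}\in H_1^{\phi}(X;\Z)$, which is determined by $u$), the space $\cH_0(X,\b)^{\phi,c}$ is precisely $\mf_0(X,J,B)^{\phi,\eta}$ after restricting to the $B$-summand, and $\det D_{(TX,\tnd\phi)}$ over it is the index bundle whose orientability is the content of the corollary.

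Next I would invoke Corollary~\ref{etaorient_cor} verbatim: under the hypothesis that either $\pi_1(X)=0$ and $c_1(TX,J)=c_1(V)$ is even, or $\La^{\top}_{\C}(TX,\tnd\phi)$ admits a real square root, the line bundle $\det D_{(TX,\tnd\phi)}\lra\cH_0(X,\b)^{\phi,c}$ is orientable, i.e.\ $w_1$ of this line bundle vanishes on every loop. Because $\cH_0(X,\b)^{\phi,c}$ is exactly $\mf_0(X,J,B)^{\phi,\eta}$ (for the relevant component $B$) and the families of real Cauchy-Riemann operators agree — both being obtained by pulling back $(TX,\tnd\phi)$ as in Remark~\ref{fam_rem} — the conclusion is immediate: the determinant line bundle of the $\dbar$-operator family over $\mf_0(X,J,B)^{\phi,\eta}$ is orientable, hence so is the moduli space itself (the tangent bundle of the moduli space, where smooth, differs from this index bundle only by canonically oriented factors, so orientability transfers). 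I would then note that $B\!\in\!H_2(X;\Z)$ was arbitrary, so the statement holds for all~$B$.

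The one genuinely substantive point that needs care — and where I would spend most of the argument — is the passage from the abstract quotient $\cH_0(X,\b)^{\phi,c}$ to the geometric moduli space $\mf_0(X,J,B)^{\phi,\eta}$, that is, verifying that the doubling construction of Section~\ref{double_sec} identifies an sh-surface disk-with-crosscap with $(\P^1,\eta)$ and that $\cD_c$ corresponds to $\aut(\P^1,\eta)\approx\R\P^3$ under this identification, so that quotienting matches on both sides and the index bundles are genuinely the same. This is essentially the reinterpretation already carried out in \cite[Section~2.3]{Teh} and recalled in the excerpt, so the work is one of bookkeeping rather than new ideas: checking that $\fJ$ standard on $D^2$ lies in $\cJ_c$ (the antipodal map $\fa$ is real-analytic for the standard structure), that the double $\hat\fJ$ is the standard structure on $\P^1$, that $\hat c=\eta$ up to conjugation as in~\eqref{etadfn_e}, and that the $\cD_c$-action descends correctly. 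A secondary technical caveat, which I would flag via Remark~\ref{mfld_rem}, is the freeness of the $\cD_c$-action; one either adds marked points or passes to Prym structures, exactly as in that remark, and this does not affect orientability. Once these identifications are in place, Corollary~\ref{etaorient_cor2} is a formal consequence of Corollary~\ref{etaorient_cor} with no further computation.
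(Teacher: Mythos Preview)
Your overall strategy matches the paper's: reduce to Corollary~\ref{etaorient_cor} with $(\Si,c)=(D^2,\fa)$ and $(V,\ti\phi)=(TX,\tnd\phi)$, using the reduction from \cite[Section~2.3]{Teh} recalled just before the statement. That part is fine.

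However, the space identifications you rely on are wrong. The space $\cH_0(X,\b)^{\phi,c}$ is \emph{not} $\mf_0(X,J,B)^{\phi,\eta}$: the former consists of equivalence classes of \emph{all smooth} maps $u\!:D^2\!\to\!X$ together with a choice of complex structure in~$\cJ_c$, modulo the infinite-dimensional group~$\cD_c$, whereas the latter consists only of $J$-holomorphic maps from $(\P^1,\eta)$. Likewise, $\cD_c$ does \emph{not} correspond to $\aut(\P^1,\eta)\approx\R\P^3$; $\cD_c$ is the infinite-dimensional group of all orientation-preserving diffeomorphisms of $D^2$ commuting with~$\fa$ on~$S^1$, while $\aut(\P^1,\eta)$ is the $3$-dimensional group of biholomorphisms of $(\P^1,\fJ_0)$ commuting with~$\eta$. (Section~\ref{sec:ls2} introduces a finite-dimensional slice $\cD_c^*$ for the disk, but even that is only~$S^1$, not~$\R\P^3$.)

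The correct link is the one the paper states explicitly and you in fact quote: $\mf_0(X,J,B)^{\phi,\eta}=\cP(X,J,B)/\aut(\P^1,\eta)$. Since $\aut(\P^1,\eta)\approx\R\P^3$ is connected and orientable, the orientability of $\mf_0$ along a loop reduces to that of $\det D_{(TX,\tnd\phi)}$ along the corresponding loop in $\cP(X,J,B)$. Such a loop maps into $\cH_0(X,\b)^{\phi,c}$ via $u\mapsto[u,\fJ_0]$, and Corollary~\ref{etaorient_cor} then applies. You do not need $\cH_0$ and $\mf_0$ to coincide; you only need loops in the latter to give loops in the former, which they do. With this correction, your argument goes through and is the paper's argument.
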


\noindent
The orientability of $\mf_0(X,J,B)^{\phi,\eta}$ under the last assumption is shown
directly in \cite[Section~2.1]{Teh}.
This in particular implies that the moduli spaces $\mf_0(\P^{4m-1},B)^{\phi,\eta}$
are orientable.
All moduli spaces $\mf_0(\P^{2m-1},B)^{\phi,\eta}$ with the two standard involutions~$\phi$
are shown to be orientable in \cite[Section~A.1]{Teh},
as implied by the first case of our assumptions.
By Example~\ref{evenP_eg} below, the moduli spaces $\mf_0(\P^n,B)^{\tau_n,\eta}$,
where 
\BE{tau2mdfn_e}
\tau_n\!:\P^n\lra\P^n,\qquad
[Z_0,Z_1,\ldots,Z_n]\lra[\bar{Z}_0,\bar{Z}_1,\ldots,\bar{Z}_n],\EE
is not orientable if $n$ and $B$ are even, 
i.e.~the condition $\pi_1(X)\!=\!0$ alone does 
not suffice for the orientability of $\mf_0(X,J,B)^{\phi,\eta}$.
By Example~\ref{prodorient_eg}, which builds on \cite[Example~2.5]{Teh}, 
no divisibility condition on $c_1(TX)$ can suffice by itself either.\\

\noindent
This paper is organized as follows.
Section~\ref{equivcoh_sec} reviews $\Z_2$-equivariant cohomology and homology and
obtains a number of related results that are used in the proof and applications
of Theorem~\ref{main_thm}.
The examples of Section~\ref{examples_sec} indicate the delicate nature
of the equivariant $w_2$-terms in~\eref{orient_eq} and
show that Corollary~\ref{etaorient_cor2} is sharp in a sense.
Section~\ref{double_sec} describes doubling constructions for oriented surfaces
with boundary involutions, develops the necessary Fredholm theory, and 
obtains a Riemann-Roch theorem for 
Cauchy-Riemann operators over such surfaces.
The somewhat technical Sections~\ref{equivcoh_sec} and~\ref{double_sec} 
enable us to extend the principles from~\cite{Ge} to
surfaces with crosscaps.
The proof of Theorem~\ref{main_thm} is the subject of Section~\ref{mainpf_sec}.
Section~\ref{sec:ls} reinterprets Corollary~\ref{orient_cor} in terms of 
local systems.
In Section~\ref{sec:ls2}, we define moduli spaces of $J$-holomorphic maps from
oriented surfaces with crosscaps and  apply the reinterpretation of Section~\ref{sec:ls}
to describe their local systems of orientations.
In Appendix~\ref{realanal_app}, we show that the notion of almost complex structure
on a bordered Riemann surface used in this paper is equivalent to 
the notion of analytic structure used in~\cite{AG,KatzLiu,Melissa}.\\ 

\noindent
We would like to thank M.~Liu for detailed discussions on topics covered in this paper, 
and W.~Browder, E.~Brugell\'e, E.~Ionel, S.~Galatius, J.~Solomon, M.~Tehrani, and G.~Tian 
for related conversations, and
the referee for the quick and detailed feedback.
The second author is also grateful to the IAS School of Mathematics for its hospitality 
during the period when the results in this paper were obtained.

\section{Equivariant cohomology}
\label{equivcoh_sec}

\noindent
We begin this section by recalling basic notions in equivariant cohomology,
in the case the group is~$\Z_2$,
and establishing some key properties of the equivariant~$w_2$ 
of real vector bundle pairs; see Proposition~\ref{tensor_prp} and 
Corollary~\ref{SQrt_crl}.
We then make a key observation, Lemma~\ref{T2equivL_lmm}, concerning 
the $\Z_2$-equivariant second Stiefel-Whitney class of real bundle pairs over 
the torus
$(S^1\!\times\!S^1,\id\!\times\!\fa)$; it is used in the proof of Theorem~\ref{main_thm}.
Lemma~\ref{w2equiv_lmm} simplifies the computation of the second line in~\eref{orient_eq}
in some cases and immediately leads to Corollary~\ref{etaorient_cor} from 
Corollary~\ref{orient_cor}.
We conclude with three examples intended to give the flavor of 
the equivariant $w_2$-term which plays a central role in the orientability problems
studied in this paper.

\subsection{Basic notions}
\label{equivdfn_e}

\noindent
The group $\Z_2$ acts freely on the contractible space $\bE\Z_2\!\equiv\!S^{\i}$
with the quotient $\bB\Z_2\!\equiv\!\R\P^{\i}$.
An involution $\phi\!:M\!\lra\!M$ corresponds to a $\Z_2$-action on~$M$.
We denote by
$$\bB_{\phi}M=\bE\Z_2\!\times_{\Z_2}\!M$$
the corresponding Borel construction and by
$$H^*_{\phi}(M)\equiv H^*(\bB_{\phi}M;\Z_2), \quad
 H_*^{\phi}(M)\equiv H_*(\bB_{\phi}M;\Z_2), \quad 
 H_*^{\phi}(M;\Z)\equiv H_*(\bB_{\phi}M;\Z)$$
the corresponding \textsf{$\Z_2$-equivariant cohomology} and \textsf{homology} of~$M$. 
The projection map \hbox{$p_1\!:\bE\Z_2\!\times\!M\!\lra\!\bE\Z_2$} descends to a fibration
\BE{BXfib_e} M\lra \bB_{\phi}M\lra \bB\Z_2=\R\P^{\i}\,.\EE
If $(V,\ti\phi)\!\lra\!(M,\phi)$ is a real bundle pair,
$$\bB_{\ti\phi} V\equiv \bE\Z_2\!\times_{\Z_2}\!V\lra \bB_{\phi}M$$
is a real vector bundle;
this is the quotient of the vector bundle $p_2^*V\!\lra\!\bE\Z_2\!\times\!M$
by the natural lift of the free $\Z_2$-action on the base.
Let 
$$w_i^{\ti\phi}(V)\equiv w_i(\bB_{\ti\phi}V)\in H_{\phi}^i(M)$$
be the \textsf{$\Z_2$-equivariant Stiefel-Whitney classes of $V\!\lra\!M$}. 
For example, if $M$ is a point and $V\!=\!\C\!=\!\R\!\oplus\!\fI\R$, 
$$\bB_{\ti\phi}V=\R\P^{\i}\!\times\!\R \oplus\cO_{\R\P^{\i}}(-1)\lra\R\P^{\i}\,,$$
where $\cO_{\R\P^{\i}}(-1)$ is the tautological line bundle;
thus, $w_1^{\ti\phi}(V)$ is the generator of $H^1_{\phi}(M)$
in this case.
The non-equivariant Stiefel-Whitney classes of~$V$ are recovered from 
the equivariant Stiefel-Whitney classes of~$V$ by restricting to the fiber
of the fibration~\eref{BXfib_e}.
If $f\!:\Si\!\lra\!M$ is a continuous map commuting with involutions~$c$ on~$\Si$
and~$\phi$ on~$M$, 
the involution~$\ti\phi$ on~$V$ induces an involution $f^*\ti\phi$ on~$f^*V$ 
lifting~$c$ and 
\BE{equivback_e} w_i^{f^*\ti\phi}(f^*V)=\{\bB_{\phi,c}f\}^*w_i^{\ti\phi}(V)\in H^i_c(\Si),\EE
where 
$$\bB_{\phi,c}f\!:\bB_c\Si\!\lra\!\bB_{\phi}M, \qquad
\{\bB_{\phi,c}f\}\big([e,z]\big)=\big[e,f(z)\big],$$
is the map induced by~$f$.\\

\noindent
If an involution $c\!:\Si\!\lra\!\Si$ has no fixed points,
the projection $p_2\!:\bE\Z_2\!\times\!\Si\!\lra\!\Si$ 
descends to a fibration
\BE{Z2fib_e2}\bE\Z_2\lra \bB_c\Si\stackrel{q}{\lra} \Si/\Z_2\,.\EE
Since $\bE\Z_2$ is contractible, this fibration is a homotopy equivalence,
with a homotopy inverse provided by any section of~$q$.
In particular, $q$~induces isomorphisms
\BE{Z2isom_e}q^*\!:H^*(\Si/\Z_2)\lra H^*_c(\Si), \qquad
q_*\!: H_*^c(\Si;\Z)\lra H_*(\Si/\Z_2;\Z).\EE
Any section of~$q$ embeds $\Si/\Z_2$ as a homotopy retract, and 
every two such sections are homotopic.
Thus, if $f\!:\Si\!\lra\!M$ is a continuous map commuting  
with the involutions~$c$ on~$\Si$ and~$\phi$ on~$M$, we also denote~by
$$\bB_{\phi,c}f\!:\Si/\Z_2\lra\bB_{\phi}M$$
the composition of $\bB_{\phi,c}f\!:\bB_c\Si\!\lra\!\bB_{\phi}M$ with
any section of~$q$; this is well-defined and unambiguous up~to homotopy.
If $(V,\ti{c})\!\lra\!(\Si,c)$ is a real bundle pair, 
$V/\Z_2\!\lra\!\Si/\Z_2$ is a real vector bundle and
\begin{gather*}
\bB_{\ti\phi}V\lra q^*(V/\Z_2)\equiv
\big\{\big([e,x],[v]\big)\!\in\!\bB_c\Si\!\times\!(V/\Z_2)\!:\,
[x]\!=\![p(v)]\big\}, \\ 
[e,v]\lra \big([e,p(v)],[v]),
\end{gather*}
is a vector bundle isomorphism covering the identity on $\bB_c\Si$.
Thus,
\BE{Vquot_e} w_i^{\ti{c}}(V)=w_i\big(q^*(V/\Z_2)\big)=q^*w_i(V/\Z_2)
\in H^i_c(\Si;\Z_2).\EE 

\subsection{Tensor products of real line bundle pairs}
\label{tensorprod_subs} 

\noindent
We now establish some important properties of the equivariant~$w_2$ 
of real vector bundle pairs.

\begin{prop}\label{tensor_prp}
Let $(X,\phi)$  be a paracompact topological space with an involution.
\begin{enumerate}[label=(\arabic*),leftmargin=*]
\item If $(L_1,\ti\phi_1),(L_2,\ti\phi_2)\!\lra\!(X,\phi)$ are rank~1 real bundle pairs, then
\BE{tensor_e}w_2^{\ti\phi_1\otimes_{\C}\ti\phi_2}(L_1\!\otimes_{\C}\!L_2)
 =w_2^{\ti\phi_1}(L_1) +w_2^{\ti\phi_2}(L_2).\EE
\item If $(V_1,\ti\phi_1),(V_2,\ti\phi_2)\!\lra\!(X,\phi)$ are any real bundle pairs, then
$$w_2^{\La_{\C}^{\top}(\ti\phi_1\oplus\ti\phi_2)}\big(\La_{\C}^{\top}(V_1\!\oplus\!V_2)\big)
=w_2^{\La_{\C}^{\top}\ti\phi_1}(\La_{\C}^{\top}V_1)
+w_2^{\La_{\C}^{\top}\ti\phi_2}(\La_{\C}^{\top}V_2).$$
\end{enumerate}
\end{prop}

\begin{proof}
The first statement of this proposition implies the second, since
$$\La_{\C}^{\top}\big((V_1,\ti\phi_1)\!\oplus\!(V_2,\ti\phi_2)\big)
=\big(\La_{\C}^{\top}(V_1,\ti\phi_1)\big)
\otimes \big(\La_{\C}^{\top}(V_2,\ti\phi_2)\big).$$
Below we establish~\eref{tensor_e}.\\

\noindent
(1) Let $\P^{\i}$ denote the infinite-dimensional complex projective space with the 
standard involution,
$$\tau_{\i}\!: \P^{\i}\lra\P^{\i}, \qquad [z_1,z_2,\ldots]\lra[\bar{z}_1,\bar{z}_2,\ldots],$$
$p_1,p_2\!:\P^{\i}\!\times\!\P^{\i}\!\lra\!\P^{\i}$ be the projection maps, and
$$(\bX,\Phi)=\big(\P^{\i}\!\times\!\P^{\i},\tau_{\i}\times\tau_{\i}\big).$$
The homotopy exact sequence for the fibration~\eref{BXfib_e}
with~$M$ replaced by~$\bX$  gives an exact sequence
$$\pi_2(\bX)\lra \pi_2\big(\bB_{\Phi}\bX)\lra 0 \lra 0\lra\pi_1(\bB_{\Phi}\bX)
\lra \pi_1(\R\P^{\i})\lra 0.$$
In particular, $H_1(\bB_{\Phi}\bX;\Z)\!=\!\Z_2$.
By \cite[Satz~II]{Hopf}, $\pi_2(\bB_{\Phi}\bX)$ surjects onto 
$H_2^{\Phi}(\bX;\Z)$, since
$$H_2\big(\pi_1(\bB_{\Phi}\bX);\Z\big) = H_2\big(\Z_2;\Z\big)
\equiv H_2(\bB\Z_2;\Z)=H_2(\R\P^{\i};\Z)=0.$$ 
Thus, by the Universal Coefficient Theorem for Homology \cite[Theorem~55.1]{Mu2},
$H_2^{\Phi}(\bX)$  is generated~by
$$\P^1_1\equiv \big\{[1,z,[1]]\!:\,z\!\in\!\P^1\big\}, \quad
\P^1_2\equiv \big\{[1,[1],z]\!:\,z\!\in\!\P^1\big\},\quad
\R\P^2\equiv \big\{[z,[1],[1]]\!:\,z\!\in\!S^2\big\},$$
where $1\!\equiv\![1,0,\ldots]\!\in\!S^{\i}$, $[1]\!\in\!\P^{\i}$ denotes 
the $S^1$-equivalence class of~$1$ in~$S^{\i}$, and $[z_1,z_2,z_3]$ denotes
the $\Z_2$-equivalence class of $(z_1,z_2,z_3)$ in 
$S^{\i}\!\times\!\P^{\i}\!\times\!\P^{\i}$;
the spheres $\P^1_1$ and $\P^1_2$ are generators of $H_2^{\Phi}(\bX;\Z)$
coming from $\pi_2(\bX)$ via $\pi_2(\bB_{\Phi}\bX)$, while $\R\P^2$ generates 
$$\tn{Tor}\big(H_1(\bB_{\Phi}\bX;\Z),\Z_2\big)\approx\Z_2$$
in the split short exact sequence for $H_2^{\Phi}(\bX)$
provided by  \cite[Theorem~55.1]{Mu2}.\\

\noindent
(2) The involution $\tau_{\i}$ naturally lifts to a conjugation~$\ti\tau_{\i}$ 
on the tautological line bundle
$$\cO_{\P^{\i}}(-1)\lra\P^{\i}.$$
We first verify~\eref{tensor_e} for the rank~1 real bundle pairs 
$$(\bL_1,\ti\Phi_1)\equiv p_1^*\big(\cO_{\P^{\i}}(-1),\ti\tau_{\i}\big),~
(\bL_2,\ti\Phi_2)\equiv p_2^*\big(\cO_{\P^{\i}}(-1),\ti\tau_{\i}\big)
~\lra~(\bX,\Phi).$$
Since the homology classes of $\P_1^1$ and $\P_2^1$ are the images of classes 
in a fiber of~\eref{BXfib_e} with~$M$ replaced by~$\bX$, 
\begin{equation*}\begin{split}
&\blr{w_2^{\ti\Phi_1\otimes\ti\Phi_2}(\bL_1\!\otimes\!\bL_2),[\P_i^1]_{\Z_2}} 
=\blr{w_2(\bL_1\!\otimes\!\bL_2),[\P_i^1]_{\Z_2}} 
=\blr{c_1(\bL_1\!\otimes\!\bL_2),[\P_i^1]_{\Z}}+2\Z\\
&\hspace{.5in}=\blr{c_1(\bL_1),[\P_i^1]_{\Z}}+
\blr{c_1(\bL_2),[\P_i^1]_{\Z}}+2\Z\\
&\hspace{.5in}=\blr{w_2(\bL_1),[\P_i^1]_{\Z_2}}+
\blr{w_2(\bL_2),[\P_i^1]_{\Z_2}}
=\blr{w_2^{\ti\Phi_1}(\bL_1),[\P_i^1]_{\Z_2}}+
\blr{w_2^{\ti\Phi_2}(\bL_2),[\P_i^1]_{\Z_2}}.
\end{split}\end{equation*}
The restrictions of $\bB_{\ti\Phi_1}\bL_1$, $\bB_{\ti\Phi_2}\bL_2$,
and $\bB_{\ti\Phi_1\otimes_{\C}\ti\Phi_2}(\bL_1\!\otimes_{\C}\!\bL_2)$ to 
$\R\P^{\i}\equiv \R\P^{\i}\!\times\![1]\!\times\![1]\subset\bB_{\Phi}\bX$ are
$$S^{\i}\!\times\!_{\Z_2}\C
\approx \R\P^{\i}\!\times\!\R\oplus\cO_{\R\P^{\i}}(-1)\lra\R\P^{\i}.$$
Thus, 
$$\blr{w_2^{\ti\Phi_1\otimes_{\C}\ti\Phi_2}(\bL_1\!\otimes_{\C}\!\bL_2),[\R\P^2]_{\Z_2}} 
=0=0+0
=\blr{w_2^{\ti\Phi_1}(\bL_1),[\R\P^2]_{\Z_2}} 
+\blr{w_2^{\ti\Phi_2}(\bL_2),[\R\P^2]_{\Z_2}}.$$
Since $\P_1^1$, $\P_2^1$, and $\R\P^2$ generate $H_2^{\Phi}(\bX)$,
this establishes~\eref{tensor_e} for $(L_i,\ti\phi_i)\!=\!(\bL_i,\ti\Phi_i)$.\\

\noindent
(3) Let $(L_i,\ti\phi_i)\!\lra\!(X,\phi)$ be as in the statement of the proposition.
By the proof of \cite[Lemma~5.6]{MiSa}, there exist continuous maps 
$$f_1,f_2\!:(X,\phi)\!\lra\!(\P^{\i},\tau_{\i})
\qquad\hbox{s.t.}\quad
(L_i,\ti\phi_i)=f_i^*(\cO_{\P^{\i}}(-1),\ti\tau_{\i}).$$
Thus,
\begin{equation*}\begin{split}
&w_2^{\ti\phi_1\otimes_{\C}\ti\phi_2}(L_1\!\otimes_{\C}\!L_2)
=\{f_1\!\times\!f_2\}^*
w_2^{\ti\Phi_1\otimes_{\C}\ti\Phi_2}(\bL_1\!\otimes_{\C}\!\bL_2)
=\{f_1\!\times\!f_2\}^*\big( w_2^{\ti\Phi_1}(\bL_1)+w_2^{\ti\Phi_2}(\bL_2)\big)\\
&\hspace{.8in}
=\{f_1\!\times\!f_2\}^*\big( p_1^* w_2^{\ti\tau_{\i}}(\cO_{\P^{\i}}(-1))
+p_2^*w_2^{\ti\tau_{\i}}(\cO_{\P^{\i}}(-1)) \big)
=w_2^{\ti\phi_1}(L_1)+w_2^{\ti\phi_2}(L_2);
\end{split}\end{equation*}
the second equality above follows from~(2).
\end{proof}

\begin{lem}\label{KleinSurface_lem}
Let $\Si$ be a compact connected unorientable surface and $b_{\Si}\!\in\!H_1(\Si;\Z)$ be 
the nontrivial torsion class.
If $\ka\!\in\!H^1(\Si;\Z_2)$,
\BE{unorpair_e}\blr{\ka^2,[\Si]_{\Z_2}}=\lr{\ka,b_{\Si}}\,,\EE
where $[\Si]_{\Z_2}\!\in\!H_2(\Si;\Z_2)$ is the fundamental class with $\Z_2$-coefficients.
\end{lem}

\begin{proof}
By \cite[Theorem~77.5]{Mu}, $\Si$ is the connected sum of $m$ copies
of~$\R\P^2$ and
$$H_2(\Si;\Z)\approx\Z^{m-1}\oplus\Z_2$$
for some $m\!\in\!\Z$.
By \cite[Theorem~77.5]{Mu}, $\Si$ 
can be represented by the labeling scheme $a_1a_1a_2a_2\!\ldots\!a_ma_m$; see Figure~\ref{Tm_fig}.
From the labeling scheme, it is immediate that 
the torsion element is given~by 
$$b_{\Si}=a_1\!+\!a_2\!+\!\ldots\!+\!a_m.$$
From the diagram, we see~that the $\Z_2$-homology intersection product is given~by 
$a_i\!\cdot\!a_i\!=\!1$ and $a_i\!\cdot\!a_j\!=\!0$ if $i\!\neq\!j$.
By the unoriented Poincare duality \cite[Theorem~67.1]{Mu2},
$\ka$ is thus Poincar\'e dual to the sum of some number of $a_1,a_2,\ldots,a_m$
and $\ka^2$ is Poincar\'e dual to the sum of the same number of $a_1^2,a_2^2,\ldots,a_m^2$. 
Thus, each side of~\eref{unorpair_e} vanishes if and only if 
$\ka$ is Poincar\'e dual to an even number of $a_1,a_2,\ldots,a_m$.
\end{proof}

\begin{figure}
\begin{pspicture}(-7,-2.3)(10,1)
\psset{unit=.4cm}
\psline[linewidth=.04](6,0)(3,1.73)\psline[linewidth=.04]{->}(6,0)(4.5,.86)
\psline[linewidth=.04](3,1.73)(0,0)\psline[linewidth=.04]{->}(3,1.73)(1.5,.86)
\psline[linewidth=.04](0,0)(0,-3.46)\psline[linewidth=.04]{->}(0,0)(0,-1.73)
\psline[linewidth=.04](6,0)(6,-3.46)\psline[linewidth=.04]{->}(6,-3.46)(6,-1.73)
\psline[linewidth=.04](0,-3.46)(3,-5.19)\psline[linewidth=.04]{->}(0,-3.46)(1.5,-4.32)
\psline[linewidth=.04](3,-5.19)(6,-3.46)\psline[linewidth=.04]{->}(3,-5.19)(4.5,-4.32)
\pscircle*(6,0){.15}\pscircle*(3,1.73){.15}\pscircle*(0,0){.15}
\pscircle*(6,-3.46){.15}\pscircle*(3,-5.19){.15}\pscircle*(0,-3.46){.15}
\rput(5.3,1.3){$a_1$}\rput(.8,1.3){$a_1$}\rput(3,-.2){$a_1'$}
\rput(5.3,-4.76){$a_3$}\rput(.8,-4.56){$a_2$}
\rput(-.6,-1.73){$a_2$}\rput(6.7,-1.73){$a_3$}
\pscircle*(.75,.43){.12}\pscircle*(3.75,1.3){.12}
\pnode(.75,.43){A}\pnode(3.75,1.3){B}
\ncarc[nodesep=0,arcangleA=-76.1,arcangleB=-53.9,ncurv=.6,linestyle=dashed,linewidth=.03]{-}{A}{B}
\end{pspicture}
\caption{Labeling scheme for $\Si=\R\P^2\!\#\R\P^2\!\#\R\P^2$ and a deformation of 
the loop $a_1$ used to compute the intersection product on $H_1(\Si;\Z)$}
\label{Tm_fig}
\end{figure}
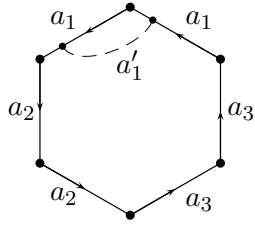

\begin{cor}\label{sqvan_cor}
For any topological space~$M$,
\BE{sqvancor_e} \big\{w\!\in\!H^2(M;\Z_2)\!:\,w(B)\!=\!0~\forall\,B\!\in\!H_2(M;\Z)\big\}
\supset\big\{\ka^2\!:\,\ka\!\in\!H^1(M;\Z_2)\big\}.\EE
If $H_1(M;\Z)$ is finitely generated, the reverse inclusion holds
if and only if $H_1(M;\Z)$ has no \hbox{4-torsion}.
\end{cor}

\begin{proof}
\noindent
(1) If $B\!\in\!H_2(M;\Z)$, there exists a continuous map $f\!:\Si\!\lra\!M$
from a closed oriented surface~$\Si$ such~that 
$$f_*[\Si]_{\Z}=B\in H_2(M;\Z).$$
Since every square class in $H^2(\Si;\Z_2)$ is trivial,
$$\blr{\ka^2,B}=\blr{\ka^2,f_*[\Si]_{\Z_2}}=\blr{(f^*\ka)^2,[\Si]_{\Z_2}}=0
\qquad\forall\,\ka\!\in\!H^1(M;\Z_2).$$
This establishes~\eref{sqvancor_e}.\\

\noindent
(2) If $H_1(M;\Z)$ is finitely generated,
\BE{H1dcomp_e}
H_1(M;\Z)\approx\Z^{r_0}\oplus\Z_{m_1}^{r_1}\oplus\ldots\oplus\Z_{m_k}^{r_k}\EE
for some $m_1,\ldots,m_k\!\ge\!2$ and $r_0,r_1,\ldots,r_m\!\ge\!0$.
In this case,
$$\Ext\big(H_1(M;\Z),\Z_2\big)\approx \bigoplus_{2|m_i}\Z_2^{r_i}\,;$$
see \cite[p331]{Mu2}.
We denote by $b_{i,j}$, with $i\!=\!1,\ldots,k$ and $j\!=\!1,\ldots,r_i$,
loops representing the generators of the torsion part in~\eref{H1dcomp_e}.
For each $i$ with $2|m_i$ and $j\!=\!1,\ldots,r_i$, there exist
a compact oriented surface $\Si_{i,j}$ with two boundary components 
$(\prt\Si_{i,j})_1$ and $(\prt\Si_{i,j})_2$,
a continuous map $F_{i,j}\!:\Si_{i,j}\!\lra\!M$, and 
an orientation-preserving diffeomorphism 
\hbox{$\vph_{i,j}\!: (\prt\Si_{i,j})_2\!\lra\!(\prt\Si_{i,j})_1$}
such~that
$$[F|_{(\prt\Si_{i,j})_1}]=(m_i/2)[b_{i,j}] \qquad\hbox{and}\qquad 
F_{i,j}|_{(\prt\Si_{i,j})_2}\!=\!F_{i,j}|_{(\prt\Si_{i,j})_1}\!\circ\!\vph_{i,j}.$$
The map $F_{i,j}$ descends to a continuous map~$\hat{F}_{i,j}$ from the unorientable surface
$$\hat\Si_{i,j}\equiv  \Si_{i,j}\big/\!\!\sim, \qquad
z\sim\vph_{i,j}(z)~~~\forall\,z\!\in\!(\prt\Si_{i,j})_2.$$
By Lemma~\ref{KleinSurface_lem},
\BE{si2iden_e2} \blr{\ka^2,\{\hat{F}_{i,j}\}_*[\hat\Si_{i,j}]_{\Z_2}}
=\blr{F_{i,j}^*\ka,[(\prt\Si_{i,j})_1]_{\Z_2}}
=\frac{m_i}{2}\lr{\ka,b_{i,j}} \qquad\forall\,\ka\in H^1(M;\Z_2).\EE
By the Universal Coefficient Theorem for Cohomology \cite[Theorem~53.1]{Mu2},
the natural homomorphism
$$H^1(M;\Z_2)\lra \Hom\big(H_1(M;\Z),\Z_2\big)
\approx \Z_2^{r_0}\oplus\bigoplus_{2|m_i}\Z_2^{r_i}$$
is an isomorphism.
By~\eref{si2iden_e2}, the homomorphism
\BE{SqHom_e}\bigoplus_{2|m_i,4{\not\,|}m_i}\!\!\!\!\Z_2^{r_i}\lra
\Ext\big(H_1(M;\Z),\Z_2\big)\subset H^2(M;\Z_2), \qquad \ka\lra\ka^2,\EE
is injective.
On the other hand, this homomorphism vanishes on the factors corresponding
to $\Z_{m_i}$ with $4|m_i$, since
their image is contained in the image of the homomorphism
$$H^1(M;\Z_{m_i})\lra H^2(M;\Z_{m_i})\lra H^2(M;\Z_2)$$
and $2\ka^2=0$ in $H^2(M;\Z_{m_i})$ for all $\ka\!\in\!H^1(M;\Z_{m_i})$.
Thus, the cokernel of the homomorphism~\eref{SqHom_e} is isomorphic
to $\bigoplus_{4|m_i}\Z_2^{r_i}$.
In particular, every element of $H^2(M;\Z_2)$ vanishing on 
the image of $H_2(M;\Z)$ in $H_2(M;\Z_2)$
is a square class if and only if $H_1(M;\Z_2)$ has no 4-torsion.
\end{proof}

\begin{cor}\label{SQrt_crl}
Let $(X,\phi)$ be a topological space with an involution and
$(L,\ti\phi)\!\lra\!(X,\phi)$ be a rank~1 real bundle pair.
\begin{enumerate}[label=(\arabic*),leftmargin=*]
\item\label{SQrtSC_it} If $X$ is simply connected and $w_2(L)\!=\!0$,  
$w_2^{\ti\phi}(L)$ is a square class.
\item\label{SQrt_it} If $X$ is paracompact and $(L,\ti\phi)$ admits a real square root,
$w_2^{\ti\phi}(L)=0$.
\end{enumerate}
\end{cor}

\begin{proof}
\ref{SQrtSC_it} The homotopy exact sequence for the fibration~\eref{BXfib_e}
with $M$ replaced by~$X$  gives an exact sequence
$$\pi_2(X)\lra \pi_2(\bB_{\phi}X)\lra 
0\lra0\lra \pi_1(\bB_{\phi}X)\lra \pi_1(\R\P^{\i})\lra 0.$$
In particular, $H_1(\bB_{\phi}X;\Z)\!=\!\Z_2$.
Thus, by Corollary~\ref{sqvan_cor},
\BE{spincond_e5}\big\{w\!\in\!H^2_{\phi}(X)\!:\,w(b)\!=\!0~\forall\,
b\!\in\!H_2^{\phi}(X;\Z)\big\}
=\big\{\ka^2\!:\,\ka\!\in\!H^1_{\phi}(X)\big\}.\EE
By \cite[Satz~II]{Hopf}, $\pi_2(\bB_{\phi}X)$ surjects onto 
$H_2^{\phi}(X;\Z)$, since
$$H_2\big(\pi_1(\bB_{\phi}X);\Z\big)
= H_2\big(\Z_2;\Z\big)
\equiv H_2(\bB\Z_2;\Z)=H_2(\R\P^{\i};\Z)=0.$$ 
Since the diagram
$$\xymatrix{\pi_2(X)\ar@{->>}[r]\ar[d]^{\approx}& \pi_2(\bB_{\phi}X)\ar@{->>}[d]\\   
H_2(X;\Z)\ar[r]& H_2(\bB_{\phi}X;\Z)\,}$$
commutes, $H_2(X;\Z)$ surjects onto $H_2^{\phi}(X;\Z)$.
Thus, we can replace $H_2^{\phi}(X;\Z)$ in~\eref{spincond_e5} by $H_2(X;\Z)$, i.e.
$$\big\{w\!\in\!H^2_{\phi}(X)\!:\,w(b)\!=\!0~\forall\,
b\!\in\!H_2(X;\Z)\big\}
=\big\{\ka^2\!:\,\ka\!\in\!H^1_{\phi}(X)\big\}.$$
Since the restriction of $w_2^{\ti\phi}(L)$ to the fiber $X\!\subset\!\bB_{\phi}X$
is $w_2(L)\!=\!0$, it follows that $w_2^{\ti\phi}(L)$ is a square class.\\

\noindent
\ref{SQrt_it} This follows immediately from the first statement 
of Proposition~\ref{tensor_prp}.
\end{proof}

\subsection{Applications to real bundle pairs}
\label{equivappl_subs} 
 
\noindent
The antipodal involution $\fa\!:S^1\!\lra\!S^1$ given by~\eref{antip_e} has no fixed points.
A~section of the projection~$q$ in~\eref{Z2fib_e2} in this case is given~by
$$S^1\lra S^{\i}\!\times_{\Z_2}\!S^1\subset(\C^{\i}\!-\!0)\!\times_{\Z_2}\!S^1, \qquad 
\ne^{\fI\th}\lra \big[(\ne^{\fI\th/2},0,0,\ldots),\ne^{\fI\th/2}\big].$$
Thus, if $(X,\phi)$ is a topological space with an involution and 
$\al\!:S^1\!\lra\!X$ is any map such that $\al\!\circ\!\fa\!=\!\phi\!\circ\!\al$, then
$$\bB_{\phi,\fa}\al\!: S^1\lra \bB_{\phi}X, \qquad 
\ne^{\fI\th}\lra \big[(\ne^{\fI\th/2},0,0,\ldots),\al(\ne^{\fI\th/2})\big].$$
The composition of $\bB_{\phi,\fa}\al$ with the projection in~\eref{BXfib_e}
is a generator of $\pi_1(\bB\Z_2)\!\approx\!\Z_2$, and so $[\al]^{\fa}\neq0\in H_1^{\phi}(X)$.
Furthermore, every loop in $B_{\phi}X$ which projects to a  generator of $\pi_1(\bB\Z_2)$
is homotopic to $\bB_{\phi,\fa}\al$ for some $\Z_2$-equivariant map $\al\!:S^1\!\lra\!X$.
If $\al,\be\!:S^1\!\lra\!X$ are two maps commuting with the involutions~$\fa$ on $S^1$
and~$\phi$ on~$X$, 
a homotopy $h\!:\bI\!\times\!S^1\lra\!\bB_{\phi}X$ between $\bB_{\phi,\fa}\al$ 
and $\bB_{\phi,\fa}\be$ lifts to a homotopy 
$$\ti{h}\!:\bI\!\times\!S^1\lra\bE\Z_2\!\times\!X$$
commuting with the $\Z_2$-actions.
Thus, $\bB_{\phi,\fa}\al$ and $\bB_{\phi,\fa}\be$ are homotopic if and only if
$\al$ and $\be$ are homotopic through maps $S^1\!\lra\!X$ intertwining~$\fa$ and~$\phi$.\\

\noindent
The composition of $\bB_{\phi,\fa}\al$ with the projection $\bB_{\phi}X\!\lra\!X/\Z_2$
is the loop $\ne^{\fI\th}\lra [\al(\ne^{\fI\th/2})]$, i.e.~the composition of 
the projection $q_X\!:X\!\lra\!X/\Z_2$ with the restriction of~$\al$ to the upper half 
$S_+^1$ of~$S^1$;
since $\al$ is~$\Z_2$-invariant, $\al(1)\!=\!\phi(\al(-1))$ and so  
the endpoints of this semi-circle map to the same point in~$X/\Z_2$.
Thus, if $\phi$ acts on~$X$ without fixed point, the loop $\bB_{\phi,\fa}\al$ in 
$\bB_{\phi}X$
corresponds to the loop $q_X\!\circ\!\al|_{S^1_+}$ in $X/\Z_2$
under the isomorphism $q_*$ 
in~\eref{Z2isom_e}, with $\Si$ replaced by~$X$.
For example, let $n\!\ge\!2$,
$$X=(S^n\!\times\!S^n)/\sim,~~(x_1,x_2)\sim(-x_1,-x_2), \qquad
\phi\big([x_1,x_2]\big)=[-x_1,x_2]=[x_1,-x_2].$$
If $\al\!:S^1\!\lra\!S^n$ is any map intertwining the antipodal involutions,
the homotopically trivial loops
$$\al_1,\al_2\!:S^1\lra X, \qquad \al_1=[\al,x^*],~~\al_2=[x^*,\al],$$
where $x^*\!\in\!S^n$ is any point, are $\Z_2$-equivariant.
Since $\bB_{\phi,\fa}\al_1$ and $\bB_{\phi,\fa}\al_2$ correspond to the two standard generators
of $\pi_1(\R\P^n\!\times\!\R\P^n)$ by the above, 
$$[\al_1]^{\fa}\neq[\al_2]^{\fa}\in H_1^{\phi}(X;\Z),$$
illustrating the statement made in Remark~\ref{equivH1_rem}.\\

\noindent
If $c\!:S^1\!\lra\!S^1$ is an orientation-preserving involution, denote by 
$$(V_{\pm},\ti{c}_{\pm})\lra (S^1\!\times\!S^1,\id\!\times\!c)$$
the real bundle pairs with 
$$V_{\pm}=\big(\bI\!\times\!S^1\!\times\!\C\big)\big/\sim, \quad
(0,z,v)\sim(1,z,\pm v)~~~\forall~z\!\in\!S^1,\,v\!\in\!\C,$$
and with the involutions induced by the standard conjugation on~$\C$.

\begin{lem}\label{T2equivL_lmm}
Let $c\!:S^1\!\lra\!S^1$ be an orientation-preserving involution different 
from the identity.
For every $n\!\ge\!1$,
$$\blr{w_2^{n\ti{c}_+}(nV_+),[S^1\!\times\!S^1]^{\id\times c}}=0,\quad
\blr{w_2^{\ti{c}_-\oplus (n-1)\ti{c}_+}(V_-\!\oplus\!(n\!-\!1)V_+),
[S^1\!\times\!S^1]^{\id\times c}}\neq0\in\Z_2.$$
\end{lem}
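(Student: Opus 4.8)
The plan is to compute both equivariant characteristic numbers by reducing to the quotient space $(S^1\times S^1)/(\id\times c)$, which is a Klein bottle $K$, and using the isomorphism $q^*\!:H^*(K;\Z_2)\to H^*_{\id\times c}(S^1\times S^1;\Z_2)$ from \eref{Z2isom_e}, together with the identification \eref{Vquot_e} of $w_i^{\ti{c}}(V)$ with $q^*w_i(V/\Z_2)$. Since $c$ is orientation-preserving and nontrivial, it is conjugate to the antipodal map $\fa$, so without loss of generality $c=\fa$; then $V_\pm/\Z_2 \to K$ are explicit real plane bundles over the Klein bottle, and the pairings in question become ordinary Stiefel-Whitney numbers $\lr{w_2(V_\pm/\Z_2),[K]}$ (using that $q_*[S^1\!\times\!S^1]^{\id\times c}=[K]$). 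The first claim, $\lr{w_2((nV_+)/\Z_2),[K]}=0$, should follow because $V_+/\Z_2$ is the pullback to $K$ of a bundle from the base $S^1$ of the fibration $S^1\to K\to S^1$: indeed $V_+=\bI\times S^1\times\C$ with the trivial gluing $(0,z,v)\sim(1,z,v)$ along the first circle, so $V_+/\Z_2$ is pulled back along the projection $K\to S^1$ onto the second (antipodally quotiented) factor, hence its $w_2$, being pulled back from a $1$-complex, vanishes; so does that of any direct sum of copies.

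For the second claim I would pin down $w_2(V_-/\Z_2)$ directly. Write $H^*(K;\Z_2)=\Z_2[a,b]/(a^2,\,ab+b^2\ \text{or similar relation})$ with $a$ pulled back from the base $S^1$ and $b$ a fiber class, and recall $[K]$ is dual to the product of the two degree-one generators. The bundle $V_-/\Z_2$ has $w_1$ equal to $a$ (the "$-$" gluing introduces exactly the monodromy along the base circle), while $V_+/\Z_2$ has $w_1=0$. Then $w_2\big((V_-\oplus(n-1)V_+)/\Z_2\big)=w_2(V_-/\Z_2)+w_1(V_-/\Z_2)\cdot 0 + \ldots = w_2(V_-/\Z_2)$ by the Whitney sum formula, so it suffices to show $\lr{w_2(V_-/\Z_2),[K]}\neq 0$. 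This is where the real content lies: I would compute it either by exhibiting a section of $V_-/\Z_2$ with a single transverse zero (counted mod $2$), or by recognizing $V_-/\Z_2$ as (isomorphic to) $TK$ or to a twist thereof and invoking $\lr{w_2(TK),[K]}=\chi(K)\bmod 2=0$ — wait, that would give zero, so more care is needed; the correct model is that $V_-/\Z_2$ is the complex line bundle with the "Möbius along the base, trivial along the fiber" monodromy, whose underlying real plane bundle has $w_1=a$, $w_2=ab$, and $\lr{ab,[K]}=1$. Verifying this last equality — i.e. that the mod-$2$ Euler class of $V_-/\Z_2$ pairs nontrivially with $[K]$ — is the main obstacle, and I expect to handle it by the clutching-function/obstruction-theory computation: over the $1$-skeleton the bundle is classified by $w_1=a\neq 0$, and the obstruction to a section over the top cell is precisely $ab$, which I would identify with the generator of $H^2(K;\Z_2)$ by tracking how the "$-$" gluing in the first coordinate interacts with going once around the fiber circle.

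Alternatively, and perhaps more cleanly, I would use the naturality statement \eref{equivback_e} together with Lemma~\ref{Latop_lmm} to replace $w_2^{\ti c_-\oplus(n-1)\ti c_+}$ of the sum by $w_2^{\La^{\top}_\C(\ti c_-\oplus(n-1)\ti c_+)}\big(\La^{\top}_\C(V_-\oplus(n-1)V_+)\big)$; since $\La^{\top}_\C$ of this sum is $(V_-,\ti c_-)\otimes\La^{\top}_\C((n-1)V_+,\ldots)$ and each $V_+$-factor contributes trivially, this reduces everything to the single rank-one pair $(V_-,\ti c_-)$, and the claim becomes $\lr{w_2^{\ti c_-}(V_-),[S^1\!\times\!S^1]^{\id\times c}}\neq 0$, i.e. exactly the non-vanishing over the Klein bottle of the equivariant $w_2$ of the "twisted" rank-one pair — which is the statement already flagged in the discussion after Theorem~\ref{main_thm} as the reason $w_2^{\ti c}$ fails to be additive. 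So in the end I would state and prove the single rank-one computation $\lr{w_2^{\ti c_-}(V_-),[S^1\!\times\!S^1]^{\id\times\fa}}=1\in\Z_2$ as the core lemma (via the obstruction-theory argument above), and deduce both displayed equations from it together with the Whitney formula and the $V_+$-triviality observation.
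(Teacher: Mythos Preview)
Your overall strategy---pass to the quotient via \eref{Vquot_e} and compute ordinary Stiefel-Whitney numbers there---is exactly the paper's. But the execution has several concrete errors.

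First, the quotient $(S^1\!\times\!S^1)/(\id\!\times\!\fa)$ is the torus $S^1\!\times\!\R\P^1$, not the Klein bottle: the involution $(s,z)\mapsto(s,-z)$ is a translation in the second factor, hence orientation-preserving, and the quotient is orientable. Your cohomology ring and your guesses for $w_1$ are therefore based on the wrong space.

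Second, the alternative route through Lemma~\ref{Latop_lmm} is circular: in the paper that lemma is deduced \emph{from} Lemma~\ref{T2equivL_lmm}.

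Third, even granting the right quotient, your claims $w_1(V_+/\Z_2)=0$ and $w_1(V_-/\Z_2)=a$ are both wrong. The conjugation $v\mapsto\bar v$ acts as $+1$ on $\R\subset\C$ and $-1$ on $\fI\R\subset\C$, so $V_\pm/\Z_2$ splits \emph{as a real bundle} into a ``real part'' and an ``imaginary part''; tracking the two signs (the $\pm$ gluing along the first circle and the $\pm1$ from conjugation along the second) gives
$(V_+)_\R=\tau$, $(V_+)_{\fI\R}=\gm_2$, $(V_-)_\R=\gm_1$, $(V_-)_{\fI\R}=\gm_1\!\otimes\!\gm_2$,
where $\gm_1,\gm_2$ are the M\"obius bundles pulled back from the two circle factors. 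In particular $w_1(V_+/\Z_2)=w_1(\gm_2)\neq0$. This real/imaginary splitting is the missing idea: once you have it, the Whitney sum formula over the torus (where $w_1(\gm_i)^2=0$) gives $w_2\big(n(V_+/\Z_2)\big)=0$ and $w_2\big((V_-\!\oplus\!(n\!-\!1)V_+)/\Z_2\big)=w_1(\gm_1)w_1(\gm_2)\neq0$ immediately, with no obstruction-theory detour needed.
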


\begin{proof} We can assume that $c(z)\!=\!-z$ on $S^1\!\subset\!\C$.
The real bundle pairs 
$$(V,\ti{c})=(nV_+,n\ti{c}_+), (V_-\!\oplus\!(n\!-\!1)V_+,\ti{c}_-\oplus (n\!-\!1)\ti{c}_+)$$ 
canonically decompose into two $\Z_2$-equivariant real vector bundles,  
induced by the real and imaginary axes in~$\C$.
By \eref{Vquot_e},
$$ \blr{w_2^{\ti{c}}(V),[S^1\!\times\!S^1]^{\id\times c}}
= \blr{w_2(V/\Z_2),[S^1\!\times\!\R\P^1]}
= \blr{w_2(V_{\R}\!\oplus\!V_{\fI\R}),[S^1\!\times\!\R\P^1]},$$
where $\R\P^1=S^1/\Z_2\equiv\bI/0\!\sim\!1$ and 
$$V_{\R}=n(V_+)_{\R},(V_-)_{\R}\!\oplus\!(n\!-\!1)(V_+)_{\R},\qquad
V_{\fI\R}=n(V_+)_{\fI\R},(V_-)_{\fI\R}\!\oplus\!(n\!-\!1)(V_+)_{\fI\R}$$
are the $\Z_2$-quotients of the real and imaginary parts of~$V$. 
Since 
\begin{alignat*}{3}
(V_{\pm})_{\R}&=\big(\bI\!\times\!\bI\!\times\!\R\big)\big/\sim,&\qquad
(0,t,v)&\sim(1,t,\pm v),~~(s,0,v)\sim(s,1,v)&\quad &\forall~s,t\!\in\!\bI,\,v\!\in\!\R,\\
(V_{\pm})_{\fI\R}&=\big(\bI\!\times\!\bI\!\times\!\R\big)\big/\sim,&\qquad
(0,t,v)&\sim(1,t,\pm v),~~(s,0,v)\sim(s,1,-v)&\quad &\forall~s,t\!\in\!\bI,\,v\!\in\!\R, 
\end{alignat*} 
we find that 
$$(V_+)_{\R}=\tau,\quad (V_-)_{\R}=\gm_1,\quad
(V_+)_{\fI\R}=\gm_2,\quad (V_-)_{\fI\R}=\gm_1\!\otimes\!\gm_2,$$
where $\tau\!\lra\!S^1\!\times\!\R\P^1$ is the trivial real line bundle
and $\gm_1,\gm_2\!\lra\!S^1\!\times\!\R\P^1$ are the pull-backs of 
the Mobius/tautological line bundle by the projection maps.
Thus,
\begin{equation*}\begin{split}
w_2\big(n(V_+/\Z_2)\big)&=w_2\big(n(\tau\!\oplus\!\gm_2)\big)
=\binom{n}{2}w_1(\gm_2)^2=0\in H^2(S^1\!\times\!\R\P^1,\Z_2),\\
w_2\big((V_-\!\oplus\!(n\!-\!1)V_+)/\Z_2\big)&=
w_2\big(\gm_1\!\oplus\!\gm_1\!\otimes\!\gm_2\big)
+w_1\big(\gm_1\!\oplus\!\gm_1\!\otimes\!\gm_2\big)w_1\big((n\!-\!1)\gm_2\big)
+w_2\big((n\!-\!1)\gm_2\big)\\
&=w_1(\gm_1)\big(w_1(\gm_1)\!+\!w_1(\gm_2)\big)+w_1(\gm_2)\cdot(n\!-\!1)w_1(\gm_2)+0\\
&=w_1(\gm_1)w_1(\gm_2)\neq0\in H^2(S^1\!\times\!\R\P^1,\Z_2).
\end{split}\end{equation*}
This implies the claim.
\end{proof}

\begin{lem}\label{Latop_lmm}
Let $c\!:S^1\!\lra\!S^1$ be an orientation-preserving involution different 
from the identity.
If $(V,\ti{c})\!\lra\!(S^1\!\times\!S^1,\id\!\times\!c)$ is a real bundle pair,
\BE{Latoplmm_e}\blr{w_2^{\La^{\top}_{\C}\ti{c}}(\La^{\top}_{\C}V),[S^1\!\times\!S^1]^{\id\times c}}
=\blr{w_2^{\ti{c}}(V),[S^1\!\times\!S^1]^{\id\times c}}.\EE
\end{lem}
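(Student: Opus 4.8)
The plan is to reduce Lemma~\ref{Latop_lmm} to the classification of real bundle pairs over $(S^1\!\times\!S^1,\id\!\times\!c)$ together with the additivity computation already carried out in Lemma~\ref{T2equivL_lmm}. First I would assume $c(z)\!=\!-z$, so that $\id\!\times\!c$ is fixed-point-free and the isomorphism \eref{Vquot_e} applies: both sides of \eref{Latoplmm_e} are evaluations of $w_2$ of a real vector bundle over the Klein bottle $S^1\!\times\!\R\P^1$ against its fundamental class. The key structural input is that every real bundle pair $(V,\ti{c})$ of rank $n$ over $(S^1\!\times\!S^1,\id\!\times\!c)$ is isomorphic to one of the two models appearing in Lemma~\ref{T2equivL_lmm}, namely $(nV_+,n\ti{c}_+)$ or $(V_-\!\oplus\!(n\!-\!1)V_+,\ti{c}_-\!\oplus\!(n\!-\!1)\ti{c}_+)$, the two being distinguished precisely by the value of $\blr{w_2^{\ti{c}}(V),[S^1\!\times\!S^1]^{\id\times c}}\!\in\!\Z_2$ (equivalently, by whether the underlying complex bundle $V$ is trivial or not, i.e.\ by $c_1(V)$ reduced mod~2, since $H^2(S^1\!\times\!S^1;\Z)\!=\!\Z$). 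This classification should follow by a clutching argument: a real bundle pair over the mapping-torus-type surface $S^1\!\times\!S^1$ with the second factor carrying $c$ is determined by a complex bundle over $S^1\!\times\![0,1]$ — which is trivial — together with a gluing automorphism over $\{0,1\}$ commuting with conjugation, and such data is classified by $\pi_0$ of the relevant gauge group, which one checks has two components detected by $w_2^{\ti{c}}$.

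With the classification in hand, the lemma is immediate for the trivial model: if $(V,\ti{c})\!=\!(nV_+,n\ti{c}_+)$ then $\La^{\top}_{\C}V$ is the trivial complex line bundle, $\La^{\top}_{\C}(V,\ti{c})\!\cong\!(V_+,\ti{c}_+)$, and both sides of \eref{Latoplmm_e} vanish by the first computation in Lemma~\ref{T2equivL_lmm}. For the nontrivial model $(V,\ti{c})\!=\!(V_-\!\oplus\!(n\!-\!1)V_+,\ti{c}_-\!\oplus\!(n\!-\!1)\ti{c}_+)$, the determinant is $\La^{\top}_{\C}V\!\cong\!V_-\!\otimes\!V_+^{\otimes(n-1)}$, and since $V_+$ is the trivial complex line bundle this is $(V_-,\ti{c}_-)$; so the left side of \eref{Latoplmm_e} equals $\blr{w_2^{\ti{c}_-}(V_-),[S^1\!\times\!S^1]^{\id\times c}}$, which is the $n\!=\!1$ case of the second (nonvanishing) computation in Lemma~\ref{T2equivL_lmm}, hence equals $1$; and the right side is the full $n$-dimensional nonvanishing evaluation, also $1$. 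Thus both sides agree in each case, proving the lemma.

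The main obstacle I anticipate is justifying the two-element classification of real bundle pairs over $(S^1\!\times\!S^1,\id\!\times\!c)$ cleanly. One must be careful that conjugation is only specified over the second $S^1$ factor (the boundary, in the sh-surface language) — here the whole surface is its own boundier since there is no genuine interior, but in the intended application $S^1\!\times\!S^1$ arises as $(\prt M_{\psi})_i$, a mapping torus of a boundary circle — so the clutching data lives in an equivariant gauge group over $S^1$ with the antipodal action, and I would want to identify its components with $\Z_2$ via an equivariant first obstruction. An alternative that sidesteps the delicate gauge-theoretic count: observe that $w_2^{\ti{c}}(V)$ and $w_2^{\La^{\top}_{\C}\ti{c}}(\La^{\top}_{\C}V)$ are both additive under direct sums when evaluated on $[S^1\!\times\!S^1]^{\id\times c}$ — the former by the Whitney sum formula together with the fact that all the relevant $w_1$-cross-terms vanish on this class (as seen inside the proof of Lemma~\ref{T2equivL_lmm}), the latter by \eref{w2sum_e} — so it suffices to verify \eref{Latoplmm_e} on a generating set of $K$-theory classes, i.e.\ on rank-one pairs, where $\La^{\top}_{\C}(V,\ti{c})\!=\!(V,\ti{c})$ tautologically and there is nothing to prove. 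I would present the argument this way, as it avoids the bundle classification entirely and reduces everything to additivity plus the rank-one tautology.
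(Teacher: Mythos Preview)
Your primary approach --- classify real bundle pairs over $(S^1\!\times\!S^1,\id\!\times\!c)$ into the two models $n(V_+,\ti c_+)$ and $(V_-,\ti c_-)\!\oplus\!(n\!-\!1)(V_+,\ti c_+)$, compute $\La^{\top}_\C$ in each case, and invoke Lemma~\ref{T2equivL_lmm} --- is exactly what the paper does. The paper obtains the classification by citing \cite[Lemma~2.2]{Teh}: real bundle pairs over $\bI\!\times\!S^1$ trivialize, so pairs over $S^1\!\times\!S^1$ are governed by clutching maps $A\!:S^1\!\lra\!\GL_n\C$ with $A(c(z))\!=\!\ov{A(z)}$, and these fall into two homotopy classes represented by constant diagonal matrices. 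Your clutching sketch is the right idea; the execution is in~\cite{Teh}.

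Your preferred alternative via additivity, however, does not sidestep the classification. Two issues. First, the additivity \eref{w2sum_e} is not proved in this paper --- it is announced as a result of the sequel~\cite{GZ} --- so invoking it here is not self-contained and risks circularity. Second, even granting additivity of both sides, the conclusion ``it suffices to check on rank-one pairs'' requires knowing that every real bundle pair over $(S^1\!\times\!S^1,\id\!\times\!c)$ is (stably) a direct sum of rank-one pairs; that statement \emph{is} the two-model classification. So the alternative buys nothing: the honest work is the classification, and once you have it the case-by-case check (your first approach, and the paper's) is the cleanest way to finish. A minor side remark: the involution $\id\!\times\!c$ here acts on all of $S^1\!\times\!S^1$ and $\ti c$ is a conjugation on all of~$V$, not just over a boundary circle --- this is the ``$M'\!=\!M$'' case of the real-bundle-pair definition --- so your worry about the conjugation being ``only specified over the second $S^1$ factor'' does not arise.
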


\begin{proof}
We continue with the notation of Lemma~\ref{T2equivL_lmm} and its proof.
By the proof of \cite[Lemma~2.2]{Teh}, every real bundle pair $(\ti{V},\ti{c})$ 
over $\bI\!\times\!S^1$ admits a \textsf{trivialization}, i.e.~a fiber-preserving bundle
isomorphism
$$\Psi\!: \ti{V}\lra \bI\times S^1\times\C^n \quad\hbox{s.t.}\quad
\Psi\big(\ti{c}(\Psi^{-1}(t,z,w))\big)=\big(t,c(z),\ov{w})~~
\quad\forall~(t,z,w)\in\bI\!\times\!S^1\!\times\!\C^n,$$
where $\ov{w}$ denotes the standard complex conjugate of~$w$. 
Thus, the real bundle pairs~$(V,\ti{c})$ over
$$(S^1\!\times\!S^1,\id\!\times\!c)=
\big((\bI\!\times\!S^1)/(0,z)\!\sim\!(1,z),\id\!\times\!c\big)$$ 
are classified by the homotopy classes of the clutching maps $S^1\!\lra\!\GL_n\C$ 
satisfying the condition $A(c(z))\!=\!\ov{A(z)}$ for all~$z\!\in\!S^1$.
By \cite[Lemma~2.2]{Teh}, there are two homotopy classes of such maps;
they are represented by 
the constant maps with values in the diagonal matrices,
with at most one diagonal entry~-1 and the remaining diagonal entries~1.
Thus,
$$(V,\ti{c})\approx n(V_+,\ti{c}_+)\qquad\hbox{or}\qquad
(V,\ti{c})\approx(V_1,\ti{c}_-)\oplus (n\!-\!1)(V_+,\ti{c}_+).$$
Since
\begin{alignat*}{2}
(V,\ti{c})&=n(V_+,\ti{c}_+)
\quad&\Lra\quad \La^{\top}_{\C}(V,\ti{c})&=(V_+,\ti{c}_+), \\
(V,\ti{c})&=(V_-,\ti{c}_-)\oplus (n\!-\!1)(V_+,\ti{c}_+)
\quad&\Lra\quad \La^{\top}_{\C}(V,\ti{c})&=(V_-,\ti{c}_-),
\end{alignat*}
the claim thus follows from Lemma~\ref{T2equivL_lmm}.
\end{proof}

\begin{lem}\label{w2equiv_lmm}
Let $(X,\phi)$ be a manifold with an involution,
$(V,\ti\phi)\!\lra\!(X,\phi)$ be a real bundle pair,
and $\be\!:S^1\!\times\!S^1\!\lra\!X$
be a continuous map commuting with $\id\!\times\!c$ and~$\phi$,
where \hbox{$c\!:S^1\!\lra\!S^1$} is an orientation-preserving involution different 
from the identity.
The real bundle pair $\be^*\La^{\top}_{\C}(V,\ti\phi)$ 
over $(S^1\!\times\!S^1,\id\!\times\!c)$ admits a real square root if and only~if
\BE{w2SQRT_e}\blr{w_2^{\ti\phi}(V),[\be]^{\id\times c}}
=0.\EE
If $\pi_1(X)\!=\!0$, then $[\be]^{\id\times c}$ is the image of the homology class
of a map $\be'\!:S^2\!\lra\!X$ under the inclusion of $X\!\lra\!\bB_{\phi}X$ and   
$$\blr{w_2^{\ti\phi}(V),[\be]^{\id\times c}}= \blr{w_2(V),[\be']}.$$
\end{lem}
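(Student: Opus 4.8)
The plan is to handle the stated equivalence first -- which is essentially formal given Lemmas~\ref{Latop_lmm} and~\ref{T2equivL_lmm} -- and then the $\pi_1(X)\!=\!0$ statement, whose content is that $[\be]^{\id\times c}$ is pulled back from the fiber of~\eref{BXfib_e} and is spherical.

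For the equivalence I would first assume $c\!=\!\fa$ as in~\eref{antip_e}; then $\id\!\times\!c$ is fixed-point free and $(S^1\!\times\!S^1)/(\id\!\times\!c)\!=\!S^1\!\times\!\R\P^1$. By the naturality~\eref{equivback_e}, $\blr{w_2^{\ti\phi}(V),[\be]^{\id\times c}}\!=\!\blr{w_2^{\be^*\ti\phi}(\be^*V),[S^1\!\times\!S^1]^{\id\times c}}$. Applying Lemma~\ref{Latop_lmm} to $(\be^*V,\be^*\ti\phi)$ over $(S^1\!\times\!S^1,\id\!\times\!c)$ and using that $\La^{\top}_{\C}$ commutes with pullback, this equals $\blr{w_2^{\ti{c}'}(L),[S^1\!\times\!S^1]^{\id\times c}}$ with $(L,\ti{c}')\!\equiv\!\be^*\La^{\top}_{\C}(V,\ti\phi)$, a \emph{rank~$1$} real bundle pair over $(S^1\!\times\!S^1,\id\!\times\!c)$. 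By the classification recalled in the proof of Lemma~\ref{Latop_lmm} (from \cite[Lemma~2.2]{Teh}), $(L,\ti{c}')$ is isomorphic to $(V_+,\ti{c}_+)$ or to $(V_-,\ti{c}_-)$; since the clutching map of a tensor square is the square of the clutching map, the tensor square of either of these is isomorphic to $(V_+,\ti{c}_+)$, so $(L,\ti{c}')$ admits a real square root if and only if $(L,\ti{c}')\!\cong\!(V_+,\ti{c}_+)$. By Lemma~\ref{T2equivL_lmm} with $n\!=\!1$, this in turn holds if and only if $\blr{w_2^{\ti{c}'}(L),[S^1\!\times\!S^1]^{\id\times c}}$ vanishes, i.e.\ if and only if~\eref{w2SQRT_e} holds, which is the first assertion.

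For the second assertion, assume $\pi_1(X)\!=\!0$ (in particular $X$ is connected). Since the homotopy equivalence~\eref{Z2fib_e2} identifies $[S^1\!\times\!S^1]^{\id\times c}$ with the fundamental class of $S^1\!\times\!\R\P^1$, we have $[\be]^{\id\times c}\!=\!f_*[S^1\!\times\!\R\P^1]$ in $H_2(\bB_\phi X;\Z_2)$, where $f\!:S^1\!\times\!\R\P^1\!\to\!\bB_\phi X$ is $\bB_{\phi,\id\times c}\be$ composed with a section of the projection $q$ in~\eref{Z2fib_e2}. I would then check that, for the explicit (product) section built from the one at the start of Section~\ref{equivappl_subs}, the composite of $f$ with the projection $\pi\!:\bB_\phi X\!\to\!\bB\Z_2\!=\!\R\P^{\i}$ of~\eref{BXfib_e} is constant along each slice $S^1\!\times\!\{z\}$ with image the standard $\R\P^1\!\subset\!\R\P^{\i}$, so that $f$ factors through $\pi^{-1}(\R\P^1)$. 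This preimage of the $1$-skeleton is $S^1\!\times_{\Z_2}\!X$ with $\Z_2$ acting antipodally on $S^1$ and by~$\phi$ on $X$, i.e.\ the mapping torus $T_\phi\!=\!(\bI\!\times\!X)/(1,x)\!\sim\!(0,\phi(x))$, and under $T_\phi\!\hookrightarrow\!\bB_\phi X$ the fiber inclusion $X\!\hookrightarrow\!T_\phi$ becomes the fiber inclusion $\iota\!:X\!\hookrightarrow\!\bB_\phi X$ of~\eref{BXfib_e}. Since $H_1(X;\Z_2)\!=\!0$, the Wang exact sequence of the bundle $X\!\to\!T_\phi\!\to\!S^1$ shows $H_2(X;\Z_2)\!\to\!H_2(T_\phi;\Z_2)$ is onto, so $[\be]^{\id\times c}\!=\!\iota_*\xi$ for some $\xi\!\in\!H_2(X;\Z_2)$. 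As $H_1(X;\Z)\!=\!0$, the coefficient reduction $H_2(X;\Z)\!\to\!H_2(X;\Z_2)$ is onto, and by Hurewicz $\pi_2(X)\!\cong\!H_2(X;\Z)$; lifting $\xi$ accordingly gives $\xi\!=\!\be'_*[S^2]$ for some $\be'\!:S^2\!\to\!X$, so $[\be]^{\id\times c}\!=\!\iota_*\be'_*[S^2]$ is the image under $X\!\hookrightarrow\!\bB_\phi X$ of the homology class of~$\be'$. Finally, since restricting an equivariant Stiefel--Whitney class to the fiber of~\eref{BXfib_e} gives the ordinary one, $\iota^*w_2^{\ti\phi}(V)\!=\!w_2(V)$, and hence $\blr{w_2^{\ti\phi}(V),[\be]^{\id\times c}}\!=\!\blr{(\iota\!\circ\!\be')^*w_2^{\ti\phi}(V),[S^2]}\!=\!\blr{w_2(V),[\be']}$.

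The formal part is short; the hard part is the second assertion, and within it the geometric step of arranging $f$ to be supported over the $1$-skeleton $\R\P^1\!\subset\!\R\P^{\i}$, so that the ``base direction'' of the Borel construction contributes nothing in degree~$2$, and of recognizing this support as the mapping torus of~$\phi$ -- after which the Wang sequence and the Hurewicz isomorphism finish things off. One must be careful that the explicit section from Section~\ref{equivappl_subs} really does land $\pi\!\circ\!f$ inside $\R\P^1$; if one prefers, this can be replaced by a homotopy argument ($[\be]^{\id\times c}$ maps to $0$ in $H_2(\R\P^{\i};\Z_2)$ because $\pi\!\circ\!f$ factors through a $1$-complex, and a Serre spectral sequence argument then places it in the image of the fiber).
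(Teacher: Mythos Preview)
Your proof is correct. The first assertion is handled essentially as in the paper: reduce to the rank~$1$ pullback via Lemma~\ref{Latop_lmm}, classify it as $(V_+,\ti{c}_+)$ or $(V_-,\ti{c}_-)$, and invoke Lemma~\ref{T2equivL_lmm}.

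For the second assertion you take a genuinely different route. The paper argues directly with homotopy: from the long exact sequence of~\eref{BXfib_e} and $\pi_1(X)\!=\!0$ one gets $\pi_1(\bB_\phi X)\!\approx\!\Z_2$ and $\pi_2(X)\!\to\!\pi_2(\bB_\phi X)$ surjective; since the restriction of $\bB_{\phi,\id\times c}\be$ to the circle $S^1\!\times\!\{x\}$ projects to a point in $\R\P^{\i}$, that circle is null-homotopic in~$\bB_\phi X$, and collapsing it shows $[\be]^{\id\times c}$ is spherical in~$\bB_\phi X$, hence in the image of a spherical class from~$X$. Your approach instead factors $f$ through the mapping torus $T_\phi\!=\!\pi^{-1}(\R\P^1)$ and uses the Wang sequence (with $H_1(X;\Z_2)\!=\!0$) to lift the class to $H_2(X;\Z_2)$, then Hurewicz plus coefficient reduction to make it spherical. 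The paper's argument is shorter and avoids spectral-sequence machinery, but leaves the ``null-homotopic on one circle implies spherical'' step implicit; yours is more systematic and makes explicit why the base direction of the Borel construction contributes nothing in degree~$2$.
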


\begin{proof}
By Lemma~\ref{Latop_lmm}, we can assume that $\rk_{\C}V\!=\!1$.
Suppose $(L,\ti{c})\!\lra\!(S^1\!\times\!S^1,\id\!\times\!c)$ is a real bundle pair
such~that
$$\be^*(V,\ti\phi)\approx(L,\ti{c})^{\otimes2}.$$
By the proof of Lemma~\ref{Latop_lmm},
$$L=\big(\bI\!\times\!S^1\!\times\!\C\big)\big/\sim, \qquad
(0,z,v)\sim(1,z,\pm v)~~~\forall\,z\!\in\!S^1,\,v\!\in\!\C,$$
with the sign $\pm$ fixed and the conjugation~$\ti{c}$ induced by 
the standard conjugation in~$\C$.
Thus,
$$\be^*(V,\ti\phi)\approx
\big(\bI\!\times\!S^1\!\times\!\C\big)\big/\sim, \qquad
(0,z,v)\sim(1,z,v)~~~\forall\,z\!\in\!S^1,\,v\!\in\!\C,$$
i.e.~$\be^*(V,\ti\phi)\approx(V_+,\ti{c}_+)$.
Along with Lemma~\ref{T2equivL_lmm}, this implies~\eref{w2SQRT_e}.
On the other hand, by the proof of Lemma~\ref{Latop_lmm} and~\eref{equivback_e},
$$\blr{w_2^{\ti\phi}(V),[\be]^{\id\times c}}=0
\qquad\Lra\qquad
\be^*(V,\ti\phi)\approx(V_+,\ti{c}_+).$$
Thus, $\be^*(V,\ti\phi)$ is isomorphic to the square of $(V_+,\ti{c}_+)$
if $\blr{w_2^{\ti\phi}(V),[\be]^{\id\times c}}=0$.\\

\noindent
If $\pi_1(X)\!=\!0$, the fibration \eref{BXfib_e} gives rise to an exact sequence
$$\ldots\lra\pi_2(X)\lra\pi_2(\bB_{\phi}X)\lra0
\lra0\lra\pi_1(\bB_{\phi}X)\lra\Z_2\lra0.$$
Thus, the restriction of $\bB_{\phi,\id\times c}\be\!:S^1\!\times\!\R\P^1\lra\bB_{\phi}X$
to at least one simple circle is homotopically trivial (specifically, the circle $S^1\!\times\!x$). 
Therefore, $[\be]^{\id\times c}$ is a spherical class and thus equals $\io_*[\be']$
for some $\be'\!\in\!H_2(X)$, with $\Z_2$ or~$\Z$ coefficients,
where $\io\!:X\!\lra\!\bB_{\phi}X$ is the inclusion of a fiber in~\eref{BXfib_e}.
Thus,
$$\blr{w_2^{\ti{c}}(V),[\be]^{\id\times c}}
=\blr{\io^* w_2^{\ti{c}}(V),[\be']} =\blr{w_2(V),[\be']}.$$
This establishes the last claim.
\end{proof}

\subsection{Some examples}
\label{examples_sec}

\noindent
We now give three concrete examples. 
By Example~\ref{twistV_eg},  real bundle pairs $(V,\ti\phi)\!\lra\!(X,\phi)$ 
which induce non-orientable determinant bundle are quite common over non-simply connected
spaces.
Examples~\ref{prodorient_eg} and~\ref{evenP_eg} 
illustrate the significance of the vanishing requirements on~$\pi_1(X)$ and $w_2(X)$ 
in Corollary~\ref{etaorient_cor2}, showing that neither requirement by itself suffices
for the orientability of the moduli space $\mf_0(X,J,b)^{\phi,\eta}$.
The orientability in the former example in fact fails due to the twisting phenomenon
of Example~\ref{twistV_eg}.
 The $m,n\!=\!1$ case of Example~\ref{prodorient_eg} is \cite[Example~2.5]{Teh}.

\begin{example}\label{twistV_eg}
Let $(V,\ti\phi)\!\lra\!(X,\phi)$ be the trivial bundle pair of rank~1, i.e.
$$V=X\times\C, \qquad
\ti\phi\!:V\lra V, \quad \ti\phi(x,v)=\big(\phi(x),\bar{v}\big)~~
\forall\,(x,v)\!\in\!X\!\times\!\C\,,$$
and $L\!\lra\!Y$ be any real line bundle.
If $\pi_X,\pi_Y\!:X\!\times\!Y\!\lra\!X,Y$ are the projection maps,
$$(V_L,\ti\phi_L)\equiv\big(\pi_X^*V\!\otimes_{\R}\!\pi_Y^*L,
\pi_X^*\ti\phi\!\otimes_{\R}\!\pi_Y^*\id_L\big)
\lra (X\!\times\!Y,\phi\!\times\!\id_Y)$$
is a real bundle pair.
If 
$$\pi_X^{\phi},\pi_Y\!:\bB_{\phi\times\id_Y}(X\!\times\!Y)=(\bB_{\phi}X)\times\!Y\lra
\bB_{\phi}X,Y$$
are the projection maps,
\begin{gather*}
\bB_{\ti\phi_L}(V_L)=\pi_X^{\phi\,*}\bB_{\ti\phi}V\otimes_{\R}\pi_Y^*L
\qquad\Lra\\
w_2^{\ti\phi_L}(V_L)=\pi_X^{\phi\,*}w_2^{\ti\phi}(V)+
\pi_X^{\phi\,*}w_1^{\ti\phi}(V)\cdot\pi_Y^*w_1(L)+\pi_Y^*w_1(L)^2\,.
\end{gather*}
In particular, if $X\!=\!\{\pt\}$ and $L\!\lra\!Y\!=\!S^1$ is the Mobius band line bundle,
$$u\!:S^1\!\times\!S^1\lra X\!\times\!Y, \qquad (s,t)\lra (\pt,s),$$
is a continuous map intertwining the involutions $\phi\!\times\!\id_Y$
and $\id\!\times\!\fa$ on $S^1\!\times\!S^1$, where $\fa$ is the antipodal map, 
such that 
$$\blr{w_2^{\ti\phi_L}(V_L),[u]^{\id\times\fa}}\neq0.$$
\end{example}

\begin{example}\label{prodorient_eg}
Let $m,n\!\in\!\Z^+$ and $\tau_n$ be as in~\eref{tau2mdfn_e}. We define
\begin{equation*}\begin{split}
\eta_{2m-1}\!:\P^{2m-1}&\lra \P^{2m-1} \qquad\hbox{by}\\
\big[W_1,W_2,\ldots,W_{2m-1},W_{2m}\big]
&\lra\big[-\ov{W}_2,\ov{W}_1,\ldots,-\ov{W}_{2m},\ov{W}_{2m-1}\big].
\end{split}\end{equation*}
With $S^1\!\subset\!\C$ denoting the unit circle as before, we define
\begin{gather*}
\cdot:S^1\!\times\!\P^n\lra\P^n, \quad 
v\!\cdot\![Z_0,Z_1,\ldots,Z_{n-1},Z_n]=[Z_0,Z_1,\ldots,Z_{n-1},vZ_n],\\
X=S^1\!\times\!S^1\times\P^n\times\P^{2m-1}, \qquad
\phi\!:X\lra X,\quad \phi(u,v,z,w)=\big(\bar{u},v,\tau_n(v\!\cdot\!z),\eta_{2m-1}(w)\big),\\
Y=\big\{(v,z)\!\in\!S^1\!\times\!\P^n\!:\,\tau_n(v\!\cdot\!z)\!=\!z\big\}.
\end{gather*}
Since the non-trivial deck transformation of the double cover 
$$S^1\!\times\!\R\P^n\lra Y, \qquad
(v,z)\lra \big(v^2,v^{-1}\!\cdot\!z\big),$$
is orientation-reversing if $n$ is odd, $Y$ is not orientable for every~$n$
(if $n$ is even, the covering space is not orientable).
Let $B\!\in\!H_2(X;\Z)$ denote the homology class of a line in the last factor.
Since the projections 
$$\pi_1\!\times\!\pi_2\!\times\!\pi_3,\pi_4\!:X\lra S^1\!\times\!S^1\times\P^n,\P^{2m-1}$$
induce an isomorphism
$$\mf_0(X,B)^{\phi,\eta}\approx\{\pm1\}\!\times\!Y\times
\mf_0(\P^{2m-1},B)^{\eta_{2m-1},\eta}\,,$$
the moduli space $\mf_0(X,B)^{\phi,\eta}$ is not orientable.
Thus, the condition $\pi_1(X)\!=\!0$ in Corollary~\ref{etaorient_cor2}
cannot be dropped, even at the cost of requiring $c_1(TX)$ to be divisible by 
an arbitrarily high integer.
\end{example}

\begin{example}\label{evenP_eg}
If $n,d\!\in\!\Z^+$ and $\tau_n$ is as in~\eref{tau2mdfn_e}, $\cP(\P^n,d)^{\tau_n,\eta}$ 
 consists of maps of the~form
$$u\!:\P^1\lra\P^n, \qquad 
[x,y]\lra\big[p_0(x,y),p_1(x,y),\ldots,p_n(x,y)\big],$$
where $p_0,p_1,\ldots,p_n$ are degree~$d$ homogeneous  polynomials 
in two variables without a common factor.
The commutativity condition on~$u$ implies that this space is empty if~$d$ is~odd.
For $d$ even, this commutativity condition is equivalent~to
$$p_i(x,y)=
A_i\prod_{r=1}^{d/2}\big((a_{i;r}x\!-\!b_{i;r}y)(\bar{b}_{i;r}x\!+\!\bar{a}_{i;r}y)\big),$$
for some $A_i\in\C$ and $[a_{i;r},b_{i;r}]\in\P^1$ such that 
$$[A_0,A_1,\ldots,A_n]=[\bar{A}_0,\bar{A}_1,\ldots,\bar{A}_n]\in\P^n.$$
Let $\De_{n+1;d}^{\eta}\subset(\Sym^{d/2}\C)^{n+1}$ denote the image of the
set 
$$\Big\{((b_{0;1},\ldots,b_{0;\frac{d}{2}}),\ldots,(b_{n;1},\ldots,b_{n;\frac{d}{2}}))
\!\in\!(\C^{d/2})^{n+1}\!:\,\bigcap_{i=0}^n
\{b_{i;1},-\bar{b}_{i;1}^{-1},\ldots,b_{i;\frac{d}{2}},-\bar{b}_{i;\frac{d}{2}}^{-1}\}\neq\eset\Big\}$$
under the quotient map $(\C^{d/2})^{n+1}\lra(\Sym^{d/2}\C)^{n+1}$.
The map
\begin{gather*}
\R\P^n\times \big((\Sym^{d/2}\C)^{n+1}-\De_{n+1;d}^{\eta}\big) \lra
\cP(\P^n,d)^{\tau_n,\eta}\,,\\
\begin{split}
&\big([A_0,\ldots,A_n],[b_{0;1},\ldots,b_{0;d/2}],\ldots,[b_{n;1},\ldots,b_{n;d/2}]\big)\\
&\qquad\qquad\lra 
\bigg[A_0\prod_{r=1}^{d/2}\big((x\!-\!b_{0;r}y)(\bar{b}_{0;r}x\!+\!y)\big),
\ldots,
A_n\prod_{r=1}^{d/2}\big((x\!-\!b_{n;r}y)(\bar{b}_{n;r}x\!+\!y)\big)\bigg],
\end{split}\end{gather*}
is an isomorphism over the open subset
of $\cP(\P^n,d)^{\tau_n,\eta}$ consisting of maps~$u$ such that 
$u([1,0])$ does not lie in any of the coordinate subspaces of~$\P^n$;
the subset $\De_{n+1;d}^{\eta}$ corresponds to polynomials with common factors and
thus does not correspond to maps to~$\P^n$.
Since $\R\P^n$ is not orientable if $n$ is even, it follows that $\cP(\P^n,d)^{\tau_n,\eta}$ 
is not orientable and neither is $\mf_0(\P^n,d)^{\tau_n,\eta}$.
Thus, the condition $w_2(TX)\!=\!0$ in Corollary~\ref{etaorient_cor2}
cannot simply be dropped.
\end{example}

\section{Fredholm theory}
\label{double_sec}

\noindent
This section describes doubling constructions for oriented sh-surfaces and 
shows that real Cauchy-Riemann operators over such surfaces are
Fredholm if the complex structure on the domain is compatible with the involution.
In the case the boundary involution is trivial, the results in this section specialize
to results in \cite[Section~3]{KatzLiu}.
However, in contrast to the situation in \cite[Section~3]{KatzLiu}, not
every complex structure on an oriented sh-surface can be doubled
and not every real Cauchy-Riemann operator is Fredholm.
Corollary~\ref{dblJ_cor} below describes a necessary and sufficient
condition for doubling a complex structure;
it can be viewed as directly capturing the bianalytic nature of 
the doubling construction for Klein surfaces in  \cite[Section~1.6]{AG}.
Remark~\ref{notFred_rem} provides examples of real Cauchy-Riemann operators
that are not Fredholm.\\

\noindent
Let $\fJ$ be an almost complex structure on a bordered Riemann surface $\Si$.
We call a smooth~chart 
$$\psi\!:(U,U\!\cap\!\prt\Si)\lra (\bH,\R),
\qquad\hbox{where}\quad \bH=\{z\!\in\!\C\!:~\Im\,z\ge0\},$$
\textsf{$\fJ$-holomorphic} if $\fJ\!=\!\psi^*\fJ_0$, where $\fJ_0$
is the standard complex structure on~$\C$.
By Corollary~\ref{sh_cor}, $\Si$ can be covered by such charts, and
so $(\Si,\fJ)$ is a Riemann surface in the sense of 
\cite[Definition~3.1.4]{KatzLiu} and \cite[Definition~2.5]{Melissa}.
We thus call $\fJ$ simply a \textsf{complex structure} on~$\Si$.\\

\noindent
Let $(\Si,c)$ be an oriented sh-surface.
If $\fJ$ is a complex structure on~$\Si$, we call $c$ 
\textsf{real-analytic with respect to~$\fJ$} if for every $z\!\in\!\prt\Si$
there exist $\fJ$-holomorphic charts
\BE{charts_e}
\psi_z\!:U_z\lra U_z' \qquad\hbox{and}\qquad \psi_{c(z)}\!:U_{c(z)}\lra U_{c(z)}',\EE
where $U_z$ and $U_{c(z)}$ are open subsets of $\Si$ containing $z$ and $c(z)$,
respectively, and  $U_z'$ and $U_{c(z)}'$ are open subsets of~$\bH$,
such~that 
\BE{Rover_e}\psi_{c(z)}\circ c\circ\psi_z^{-1}\!:~
  \psi_z\big(U_z\cap c(U_{c(z)}\!\cap\!\prt\Si)\big)\lra\R\EE
is a real-analytic function on an open subset of $\R\!\subset\!\C$.
In particular, $\id_{\prt\Si}$ is a real-analytic involution with respect to 
any complex structure~$\fJ$ on $\Si$ and so $\cJ_{\id_{\Si}}\!=\!\cJ_{\Si}$.
If $c$ is real-analytic with respect to~$\fJ$,
then \eref{Rover_e} is real-analytic for any choice of $\fJ$-holomorphic
charts as in~\eref{charts_e}.
The following lemma describes an important property of the collection~$\cJ_c$ 
of complex structures on~$\Si$ for which $c$ is real-analytic with respect to~$\fJ$. 

\begin{lem}\label{cDcJ_lem}
Let $(\Si,c)$ be an oriented sh-surface.
If $\fJ\!\in\!\cJ_c$ and $h\!\in\!\cD_c$, then $h^*\fJ\!\in\!\cJ_c$.
\end{lem}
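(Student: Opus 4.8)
The plan is to verify the defining condition of $\cJ_c$ directly: given $\fJ\!\in\!\cJ_c$ and $h\!\in\!\cD_c$, I need to show that for every $z\!\in\!\prt\Si$ there exist $h^*\fJ$-holomorphic charts near $z$ and $c(z)$ so that the transition map through $c$ as in~\eref{Rover_e} is real-analytic. The natural source of such charts is the pullback under $h$ of $\fJ$-holomorphic charts. First I would note that if $\psi\!:(U,U\!\cap\!\prt\Si)\!\lra\!(\bH,\R)$ is $\fJ$-holomorphic (so $\fJ\!=\!\psi^*\fJ_0$), then $\psi\!\circ\!h$ is defined on $h^{-1}(U)$, maps it into $\bH$ (since $h$ preserves $\prt\Si$ and the orientation, and maps are diffeomorphisms), and satisfies $(\psi\!\circ\!h)^*\fJ_0=h^*\psi^*\fJ_0=h^*\fJ$; hence $\psi\!\circ\!h$ is an $h^*\fJ$-holomorphic chart on $h^{-1}(U)$.

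Next I would set up the transition map. Fix $z\!\in\!\prt\Si$ and let $w\!=\!h(z)$. Since $\fJ\!\in\!\cJ_c$, there are $\fJ$-holomorphic charts $\psi_w$ near $w$ and $\psi_{c(w)}$ near $c(w)$ with $\psi_{c(w)}\!\circ\!c\!\circ\!\psi_w^{-1}$ real-analytic on the relevant open subset of $\R$. Define $\ti\psi_z=\psi_w\!\circ\!h$ near $z$ and $\ti\psi_{c(z)}=\psi_{c(w)}\!\circ\!h$ near $c(z)$; by the previous paragraph these are $h^*\fJ$-holomorphic charts. The key point is the identity $c\!\circ\!h=h\!\circ\!c$ on $\prt\Si$, which holds because $h\!\in\!\cD_c$ commutes with $c$ when restricted to $\prt\Si$. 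Using this, I compute on the appropriate domain
\[
\ti\psi_{c(z)}\circ c\circ\ti\psi_z^{-1}
=\psi_{c(w)}\circ h\circ c\circ h^{-1}\circ\psi_w^{-1}
=\psi_{c(w)}\circ c\circ\psi_w^{-1},
\]
which is exactly the real-analytic transition function coming from $\fJ\!\in\!\cJ_c$. Hence $c$ is real-analytic with respect to $h^*\fJ$, i.e. $h^*\fJ\!\in\!\cJ_c$.

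The only mild subtlety — and the closest thing to an obstacle — is bookkeeping about domains: one must check that $h^{-1}$ carries a neighborhood of $w$ to a neighborhood of $z$ on which everything is defined, that the image of $\prt\Si$ lands in $\R$ rather than the interior of $\bH$ (which follows from $h$ being an orientation-preserving diffeomorphism preserving $\prt\Si$, so its local expression in $\fJ$-holomorphic boundary charts is a biholomorphism of neighborhoods in $\bH$ fixing $\R$), and that $h$ commutes with $c$ on the nose on $\prt\Si$, which is part of the hypothesis $h\!\in\!\cD_c$. Once these are in place, the displayed cancellation is immediate and no further computation is needed. I would also remark, as the excerpt does right after the lemma statement, that this is precisely what is needed for the quotient $\cH_g(X,\b)^{\phi,c}=(\fB_g(X,\b)^{\phi,c}\!\times\!\cJ_c)/\cD_c$ to make sense.
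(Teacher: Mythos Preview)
Your proof is correct and follows essentially the same approach as the paper: pull back $\fJ$-holomorphic charts via $h$ to obtain $h^*\fJ$-holomorphic charts, then use $h\!\circ\!c=c\!\circ\!h$ on $\prt\Si$ to see that the transition map $\ti\psi_{c(z)}\circ c\circ\ti\psi_z^{-1}$ equals the original real-analytic transition map $\psi_{c(w)}\circ c\circ\psi_w^{-1}$. The paper's proof is the same computation, just written more tersely.
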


\begin{proof}
If $\{\psi_z\!:U_z\!\lra\!\bH\}$ are the analytic charts for $(\Si,\fJ)$,  then 
$$h^*\psi_z\equiv\psi_z\!\circ\!h:h^{-1}(U_z)\lra \bH$$
are the analytic charts for $(\Si,h^*\fJ)$.
Since
$$h^*\psi_{c(z)}\circ c\circ \{h^*\psi_z\}^{-1}
=\psi_{c(z)}\circ c\circ \psi_z^{-1}: 
\psi_z\big(U_z\cap c(U_{c(z)}\!\cap\!\prt\Si)\big)\lra\R$$
is real-analytic (because $c$ is real-analytic with respect to~$\fJ$),
it follows that $c$ is real-analytic with respect to~$h^*\fJ$.
\end{proof}

\begin{lem}\label{Rover_lem}
Let $(\Si,c)$ be an oriented sh-surface.
If $\fJ$ is a complex structure on~$\Si$ such that $c$ is real-analytic with respect
to~$\fJ$, then for every $z\!\in\!\prt\Si$ there exist  $\fJ$-holomorphic
charts as in~\eref{charts_e} such that 
\BE{overlap_e}
c\big(U_z\!\cap\!\prt\Si\big)=U_{c(z)}\!\cap\!\prt\Si \qquad \hbox{and}\qquad
\psi_z|_{U_z\cap\prt\Si}=\psi_{c(z)}\circ c.\EE
\end{lem}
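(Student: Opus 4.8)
The plan is to take $\fJ$-holomorphic charts $\psi_z$ and $\psi_{c(z)}$ as in~\eref{charts_e}, provided by the hypothesis that $c$ is real-analytic with respect to~$\fJ$, and to replace $\psi_z$ by its composition with a local biholomorphism of~$\bH$ that absorbs the boundary transition map. First I would arrange, by elementary point-set topology, that $c(U_z\cap\prt\Si)=U_{c(z)}\cap\prt\Si$: shrinking $U_z$ using the continuity of~$c$ at~$z$ we may assume $c(U_z\cap\prt\Si)\subseteq U_{c(z)}\cap\prt\Si$, and then, since $\prt\Si\setminus c(U_z\cap\prt\Si)$ is closed in~$\Si$, we may replace $U_{c(z)}$ by $U_{c(z)}\setminus(\prt\Si\setminus c(U_z\cap\prt\Si))$. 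The transition map $f\equiv\psi_{c(z)}\circ c\circ\psi_z^{-1}$ is then a real-analytic function on the open interval $I\equiv\psi_z(U_z\cap\prt\Si)\subseteq\R$; since $c$ and the two charts are diffeomorphisms onto their images, $f'$ is nowhere zero on~$I$, and since $c$ preserves the boundary orientation of~$\prt\Si$ while each $\fJ$-holomorphic chart, being holomorphic, preserves orientation and hence restricts to an orientation-preserving diffeomorphism on the boundary, in fact $f'>0$ on~$I$.

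Next, being real-analytic, $f$ extends to a holomorphic function $F$ on an open neighborhood~$\Om$ of~$I$ in~$\C$; after shrinking~$\Om$ we may take it connected and symmetric about~$\R$ and $F$ injective (using $F'|_I=f'\neq0$), hence a biholomorphism onto an open subset of~$\C$. Applying the identity theorem to~$F$ and to $w\mapsto\overline{F(\bar w)}$, which agree on~$I$, gives $F(\bar w)=\overline{F(w)}$; together with injectivity this forces $F$ to send $\{\Im w>0\}$ near~$I$ into either $\{\Im>0\}$ or $\{\Im<0\}$, and the former holds because the sign of~$F'$ on~$I$ equals that of~$f'$, namely positive. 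Thus $F$ restricts to a biholomorphism from a neighborhood of~$I$ in~$\bH$ onto a neighborhood of~$f(I)$ in~$\bH$, and for an open $\ti U_z\ni z$ with $\psi_z(\ti U_z)$ inside this neighborhood the map $\ti\psi_z\equiv F\circ\psi_z\colon\ti U_z\lra\bH$ is again a $\fJ$-holomorphic chart (a composite of such a chart with a biholomorphism of open subsets of~$\bH$), carrying $\ti U_z\cap\prt\Si$ into~$\R$ with $\ti\psi_z|_{\ti U_z\cap\prt\Si}=f\circ\psi_z|_{\ti U_z\cap\prt\Si}=\psi_{c(z)}\circ c|_{\ti U_z\cap\prt\Si}$.

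Finally I would shrink $U_{c(z)}$ to $\ti U_{c(z)}$ with $\ti U_{c(z)}\cap\prt\Si=c(\ti U_z\cap\prt\Si)$, exactly as in the first step (the required inclusion $c(\ti U_z\cap\prt\Si)\subseteq U_{c(z)}\cap\prt\Si$ being automatic since $\ti U_z\subseteq U_z$), and set $\ti\psi_{c(z)}\equiv\psi_{c(z)}|_{\ti U_{c(z)}}$; then $c(\ti U_z\cap\prt\Si)=\ti U_{c(z)}\cap\prt\Si$ and $\ti\psi_z|_{\ti U_z\cap\prt\Si}=\ti\psi_{c(z)}\circ c$, which is~\eref{overlap_e}. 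The one point I expect to require care is the claim that~$F$ maps~$\bH$ into~$\bH$ near~$I$ rather than into the lower half-plane: this is precisely where the orientation-compatibility built into the notions of boundary involution and of $\fJ$-holomorphic chart enters, guaranteeing $f'>0$; the remaining ingredients, real-analytic continuation and the point-set shrinking of domains, are routine.
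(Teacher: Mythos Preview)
Your proof is correct and follows essentially the same route as the paper's: shrink charts to arrange $c(U_z\cap\prt\Si)=U_{c(z)}\cap\prt\Si$, extend the real-analytic boundary transition $f=\psi_{c(z)}\circ c\circ\psi_z^{-1}$ holomorphically to a neighborhood of the interval, observe that the extension preserves~$\bH$ because $f$ is orientation-preserving, and precompose $\psi_z$ with it. You spell out the Schwarz-reflection and $f'>0$ reasoning for the $\bH$-preservation more explicitly than the paper does, but the argument is the same.
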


\begin{proof} The first condition in~\eref{overlap_e} can be achieved by shrinking 
the charts.
If $\psi_c$ and~$\psi_{c(z)}$ are $\fJ$-holomorphic charts as in~\eref{charts_e}
which satisfy the first equation in~\eref{overlap_e},
$$g\equiv\psi_{c(z)}\circ c\circ\psi_z^{-1}\!:~
  \psi_z\big(U_z\!\cap\!\prt\Si\big)\lra
  \psi_{c(z)}\big(U_{c(z)}\!\cap\!\prt\Si\big)$$
is a real-analytic orientation-preserving diffeomorphism between open subsets of~$\R$.
Let $\ti{g}\!:W'\!\lra\!\C$ be an extension of $g$ to a holomorphic map on 
a neighborhood~$W'$ of $\psi_z(U_z\!\cap\!\prt\Si)\!\times\!0$ in 
$\psi_z(U_z\!\cap\!\prt\Si)\!\times\!\R$.
Since $g$ is an orientation-preserving diffeomorphism between open subsets of~$\R$
(because $c$ is orientation-preserving), we can assume that $\ti{g}$ is a biholomorphic 
map taking $W'$ onto a neighborhood~$W''$ of $\psi_{c(z)}(U_{c(z)}\!\cap\!\prt\Si)\!\times\!0$
in $\psi_{c(z)}(U_{c(z)}\!\cap\!\prt\Si)\!\times\!\R$
and $W'\!\cap\!\bH$ onto $W''\!\cap\!\bH$.
Replacing $\psi_z$ with $\ti{g}\!\circ\!\psi_z|_{\psi_z^{-1}(W')}$,
we obtain a $\fJ$-holomorphic chart around~$z$ on~$\Si$ satisfying~\eref{overlap_e}.
\end{proof}

\noindent
Let $(\hat\Si,\hat{c})$ be the double of $(\Si,c)$ as in~\eref{dblSi_e}.

\begin{cor}\label{dblJ_cor}
Let $(\Si,c)$ be an oriented sh-surface and $\fJ\!\in\!\cJ_{\Si}$.
There exists a complex structure $\hat\fJ$ on~$\hat\Si$ so that 
$\hat\fJ|_{\Si}\!=\!\fJ$ and $\hat{c}^*\hat\fJ\!=\!-\hat\fJ$ if and only if 
$c$ is real-analytic with respect to~$\fJ$.
\end{cor}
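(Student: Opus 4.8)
The plan is to establish both implications, the construction of $\hat\fJ$ from a real-analytic $c$ being the main work. Write $j^{\pm}\!:\Si\lra\hat\Si$ for the tautological maps $z\mapsto[\pm,z]$, so $\hat c\circ j^{+}=j^{-}$ and, for $z\in\prt\Si$, $j^{+}(c(z))=[+,c(z)]=[-,z]=j^{-}(z)$; let $N:=j^{+}(\prt\Si)$ be the image of the boundary.

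For the direction $(\Leftarrow)$, assume $c$ is real-analytic with respect to $\fJ$. By Lemma~\ref{Rover_lem}, for every $z\in\prt\Si$ choose $\fJ$-holomorphic charts $\psi_{z}\!:U_{z}\lra\bH$ and $\psi_{c(z)}\!:U_{c(z)}\lra\bH$ with $\psi_{z}|_{U_{z}\cap\prt\Si}=\psi_{c(z)}\circ c$ as in \eref{overlap_e}. I would then define an atlas on $\hat\Si$: over $j^{+}(\Si)$ off $N$ use the charts of $(\Si,\fJ)$ pulled back by $(j^{+})^{-1}$; over $j^{-}(\Si)$ off $N$ use the complex conjugates of the charts of $(\Si,\fJ)$ pulled back by $(j^{-})^{-1}$; and near each point $[+,z]=[-,c(z)]$ use the chart on $j^{+}(U_{z})\cup j^{-}(U_{c(z)})$ given by $[+,w]\mapsto\psi_{z}(w)\in\bH$ and $[-,w']\mapsto\overline{\psi_{c(z)}(w')}\in\overline{\bH}$. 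The two formulas agree on $N$: for $w\in\prt\Si$ near $z$ one has $\psi_{z}(w)=\psi_{c(z)}(c(w))=\overline{\psi_{c(z)}(c(w))}$ by \eref{overlap_e} and reality of $\psi_{c(z)}$ on $\prt\Si$; so each such chart is a homeomorphism onto an open subset of $\C$, assembled from a piece of $\bH$ and a piece of $\overline{\bH}$ along a common interval of $\R$. This endows $\hat\Si$ with a smooth structure and an almost complex structure $\hat\fJ$. Holomorphic compatibility of the atlas is clear off $N$; across $N$ a transition between two of the assembled charts is holomorphic on each open half-plane and continuous across $\R$, hence holomorphic by Morera's theorem. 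By construction $\hat\fJ|_{\Si}=\fJ$, and $\hat c$, which swaps $[+,w]$ and $[-,w]$, is in these charts (from the one near $[+,z]$ to the one near $[+,c(z)]$) the conjugation $\zeta\mapsto\bar\zeta$, so $\hat c^{*}\hat\fJ=-\hat\fJ$; in particular $N$ becomes a real-analytic curve, cf.\ Remark~\ref{dbldfn_e}.

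For $(\Rightarrow)$, suppose such a $\hat\fJ$ exists; then $\hat c$ is anti-holomorphic. Fix $z\in\prt\Si$, put $p=j^{+}(z)$, so $\hat c(p)=j^{+}(c(z))$. Pick $\hat\fJ$-holomorphic charts $\hat\Psi$ near $p$ and $\hat\Psi'$ near $\hat c(p)$ with $\hat c$ mapping the domain of $\hat\Psi$ into that of $\hat\Psi'$; since $\hat c$ is anti-holomorphic, $F:=\hat\Psi'\circ\hat c\circ\hat\Psi^{-1}$ is anti-holomorphic, hence real-analytic. Since $\hat\fJ|_{\Si}=\fJ$, the maps $\psi_{z}:=\hat\Psi\circ j^{+}$ and $\psi_{c(z)}:=\hat\Psi'\circ j^{+}$ are $\fJ$-holomorphic charts of $\Si$ near $z$ and $c(z)$, and for $w\in\prt\Si$ near $z$,
\[
\psi_{c(z)}(c(w))=\hat\Psi'\big(j^{+}(c(w))\big)=\hat\Psi'\big(\hat c(j^{+}(w))\big)=F\big(\psi_{z}(w)\big).
\]
Hence $\psi_{c(z)}\circ c\circ\psi_{z}^{-1}=F$ on $\psi_{z}(U_{z}\cap\prt\Si)$, which is real-analytic. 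It remains to replace $\psi_{z},\psi_{c(z)}$, which a priori are valued in regions of $\C$ bounded by the smooth arcs $\hat\Psi(N),\hat\Psi'(N)$ rather than in $\bH$, by charts as in \eref{charts_e}: this is possible, preserving real-analyticity of the overlap, once one knows that $N$ is a real-analytic curve in $(\hat\Si,\hat\fJ)$ — a fact that follows from $\hat c$ being an anti-holomorphic involution, via Schwarz reflection near $N$.

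I expect the delicate points to be: in $(\Leftarrow)$, the holomorphic compatibility of the assembled atlas across $N$ — this is exactly where Lemma~\ref{Rover_lem}, hence the hypothesis on $c$, is indispensable, and where one must check the atlas is globally coherent; and in $(\Rightarrow)$, the identification of the restricted charts $\hat\Psi\circ j^{+}$ with charts as in \eref{charts_e}, i.e.\ that $N$ is a real-analytic curve of $(\hat\Si,\hat\fJ)$ — a $\hat\fJ$-holomorphic chart only exhibits $N$ as a smooth arc, and it is the anti-holomorphic symmetry $\hat c$ that upgrades this to real-analyticity. The latter is the step I would be most careful about.
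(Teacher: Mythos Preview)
Your construction for $(\Leftarrow)$ is essentially the paper's own argument in part~(1): both build the atlas on $\hat\Si$ from the matched pairs of charts supplied by Lemma~\ref{Rover_lem} and verify holomorphic compatibility across $N$.

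Your $(\Rightarrow)$ direction, however, has a genuine gap at precisely the point you flag. The Schwarz-reflection argument for real-analyticity of $N$ requires that $\hat c$ fix $N$ \emph{pointwise}, since the real-analytic fixed locus of an anti-holomorphic involution is what Schwarz reflection produces. But for $z\in\prt\Si$ one has $\hat c(j^+(z))=j^+(c(z))$, so $\hat c|_N$ is the involution~$c$ itself; when $|c_i|=1$ on a boundary component (the crosscap case, which is the whole point of the paper), $\hat c$ acts on that circle by the antipodal map, not the identity. In that situation $N$ need \emph{not} be a real-analytic curve of $(\hat\Si,\hat\fJ)$: Remark~\ref{dbl_rem} constructs exactly such $\hat\fJ$ on $\P^1$ compatible with the fixed-point-free $\eta$ for which the separating circle is merely smooth. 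Consequently the straightening maps $g,g'$ you would need to pass from $\hat\Psi\!\circ\!j^+$ to genuine charts into~$\bH$ are only smooth on the boundary, and the composition $g'\circ F\circ g^{-1}|_{\R}$ loses the real-analyticity of~$F$.

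The paper's proof of $(\Rightarrow)$ is entirely different and avoids this obstacle: it localizes to $\Si=D^2$, observes that the existence of $\hat\fJ$ makes the standard $\dbar$-operator on the trivial real bundle pair over $(D^2,c)$ Fredholm via the doubling used in Proposition~\ref{etaFred_prp}, and then invokes Remark~\ref{notFred_rem}, which shows directly that this operator has infinite-dimensional cokernel whenever $c$ fails to be real-analytic. Your chart-based approach does succeed when $c=\id_{\prt\Si}$ (since then $N=\hat\Si^{\hat c}$ really is real-analytic), but for crosscaps you would need a different mechanism.
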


\begin{proof}
(1) Suppose $c$ is real-analytic with respect to~$\fJ$.
Given $z\!\in\!\prt\Si$, choose $\fJ$-holomorphic charts on~$\Si$ as in Lemma~\ref{Rover_lem}.
The first condition in~\eref{overlap_e} implies that 
\BE{Wz_e}W_{[z]}\equiv \big(\{+\}\!\times\! U_z\sqcup \{-\}\!\times\!U_{c(z)}\big)\big/\!\!\sim
~\subset \hat\Si\EE
is an open subset.
The second condition in~\eref{overlap_e} implies that the map
$$\Psi_{[z]}\!:W_{[z]}\lra\C, \qquad 
x\lra\begin{cases}
\psi_z(z'),&\hbox{if}~x\!=\![+,z'],~z'\!\in\!U_z;\\
\ov{\psi_{c(z)}(z')},&\hbox{if}~x\!=\![-,z'],~z'\!\in\!U_{c(z)};
\end{cases}$$
is well-defined (agrees on the overlap of the two cases, which is when 
$z'\!\in\!U_z\!\cap\!\prt\Si$).
This map is a homeomorphism onto the open subset $\psi_z(U_z)\!\cup\!\ov{\psi_{c(z)}(U_{c(z)})}$
of~$\C$.
If $(\psi_z',\psi_{c(z)}')$ is another pair of charts as above with the same domains, 
the overlap map is given~by
$$\Psi_{[z]}'\!\circ\!\Psi_{[z]}^{-1}\!:  \Psi_{[z]}(W_{[z]})\lra \Psi_{[z]}'(W_{[z]}'),\quad
x\lra\begin{cases}
\psi_z'(\psi_z^{-1}(x)),&\hbox{if}~x\in\Psi_{[z]}(W_{[z]})\cap\bH;\\
\ov{\psi_{c(z)}'(\psi_{c(z)}^{-1}(\bar{x}))},&\hbox{if}~x\in\Psi_{[z]}(W_{[z]})\cap\ov\bH.
\end{cases}$$
Thus, the collection of our charts induces a complex structure on $\hat\Si$ that agrees
with $\fJ$ on $\Si^+$ and $-\fJ$ on $\Si^-$, as required.\\
 
\noindent
(2) Suppose there exists a complex structure $\hat\fJ$ on~$\hat\Si$ 
so that $\hat\fJ|_{\Si}\!=\!\fJ$ and $\hat{c}^*\hat\fJ\!=\!-\hat\fJ$.
By deforming~$\fJ$ away from~$\prt\Si$ and collapsing circles close to~$\prt\Si$,
we can assume that $\Si\!=\!D^2$ and so $\hat\Si\!=\!\P^1$.
Since $\fJ\!=\!\hat\fJ|_{\Si}$, the standard $\dbar$-operator $\dbar_0$ on
the trivial real bundle pair  $(D^2\!\times\!\C,\ti{c}_1)\!\lra\!(D^2,c)$ 
is surjective and Fredholm, by the commutativity property
used in the proof of Proposition~\ref{etaFred_prp}.
Remark~\ref{notFred_rem} then implies that~$c$ is real-analytic with respect to~$\fJ$.
\end{proof}

\begin{remark}\label{dbl_rem}
The image of $\prt\Si$ in $\hat\Si$ is an analytic curve with 
respect to the doubled complex structure~$\hat\fJ$ constructed in~(1)
of the proof of Corollary~\ref{dblJ_cor}: there are charts on~$\hat\Si$
taking this curve to~$\R\!\subset\!\C$.
There can be other complex structures~$\hat\fJ$ satisfying the requirements of
Corollary~\ref{dblJ_cor} for which the image of $\prt\Si$ is not analytic;
they induce different smooth structures on~$\hat\Si$ across~$\prt\Si$.
For example, let $\eta\!:\P^1\!\lra\!\P^1$ be as in~\eref{etadfn_e}.
Choose any simple curve in the upper-hemisphere of~$\P^1$ with ends at 
a pair of antipodal points on the equator.
Using~$\eta$ to double the curve, we obtain a simple closed curve which splits~$\P^1$
into two halves interchanged by~$\eta$; each half is a disk with boundary involution
induced by~$\eta$.
For a generic choice of the arc, the closed curve is not real-analytic with respect 
to the standard complex structure on~$\P^1$.
One can ensure that this curve is smooth at the junction by requiring it to run along
the equator near its ends.
\end{remark}

\noindent
A real bundle pair $(V,\ti{c})\!\lra\!(\Si,c)$ doubles to a complex bundle 
$$\hat{V}
\equiv\big(\{+\}\!\times\!V\sqcup \{-\}\!\times\!\bar{V}\big)\big/\!\sim,\qquad
(+,v)\sim\big(-,\ti{c}(v)\big)~~~\forall\,v\!\in\!V|_{\prt\Si},$$
over $\hat\Si$ with conjugation $\breve{c}\!:\hat{V}\!\lra\!\hat{V}$ lifting 
$\hat{c}\!:\hat\Si\!\lra\!\hat\Si$, where $\bar{V}$ denotes the same real vector bundle
over~$\Si$ as~$V$, but with the opposite complex structure on the fibers.
We define the \textsf{Maslov index of~$(V,\ti{c})$}~by
$$\mu(V,\ti{c})=\lr{c_1(\hat{V}),[\hat\Si]}.$$
By \cite[Theorem~C.3.5 and~(C.3.4)]{MS}, this agrees with the usual definition of
the Maslov index of $(V,V^{\ti{c}})$ if $c\!=\!\id_{\prt\Si}$.
By \cite[Propositions~4.1,~4.2]{BHH}, real bundle pairs $(V,\ti{c})\!\lra\!(\Si,c)$
are in fact classified by their rank, the Maslov index, and the orientability of $V^{\ti{c}}$
over each boundary component $(\prt\Si)_i$ with $|c_i|\!=\!0$.
For the sake of completeness, we confirm this for $\Si\!=\!D^2$,
which is the only case needed for the purposes of this paper.

\begin{lem}\label{Maslovind_lmm}
Let $c$ be an orientation-preserving involution on $\prt D^2\!=\!S^1$.
The Maslov index classifies the real bundle pairs $(V,\ti{c})\!\lra\!(D^2,c)$.
In particular, a rank~$n$ real bundle pair $(V,\ti{c})\!\lra\!(D^2,c)$ is 
isomorphic to the trivial one, i.e.
$$(D^2\!\times\!\C^n,\ti{c}_n)\lra(D^2,c), \qquad 
\ti{c}_n(z,v)=\big(c(z),\bar{v}\big)\quad\forall\,(z,v)\!\in\!S^1\!\times\!\C^n,$$
if and only if $\mu(V,\ti{c})\!=\!0$.
\end{lem}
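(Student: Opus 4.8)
The plan is to reduce the classification to the clutching-map description already used in the proof of Lemma~\ref{Latop_lmm}, treating the two cases $|c|\!=\!0$ and $|c|\!=\!1$ separately. Recall that $D^2$ deformation retracts onto its boundary, so any complex vector bundle $V\!\lra\!D^2$ is topologically trivial and, once trivialized, the conjugation $\ti{c}$ on $V|_{S^1}$ is encoded by a map $A\!:S^1\!\lra\!\GL_n\C$ satisfying $A(c(z))\!=\!\ov{A(z)}$; the pair $(V,\ti{c})$ is determined up to isomorphism by the homotopy class of $A$ among such maps, and the trivial pair $(D^2\!\times\!\C^n,\ti{c}_n)$ corresponds to the constant map $A\!\equiv\!\Id$. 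So the statement is equivalent to: (i) the set of such homotopy classes is detected by the Maslov index $\mu$, and (ii) $\mu(V,\ti{c})\!=\!0$ exactly for the class of the constant map.

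First I would handle $|c|\!=\!0$, i.e.~$c\!=\!\id_{S^1}$, where the constraint is $A(z)\!=\!\ov{A(z)}$, so $A$ takes values in $\GL_n\R$. Here $V^{\ti{c}}\!\lra\!S^1$ is a rank-$n$ real bundle with clutching function $A$, and $\pi_0(\GL_n\R)\!=\!\Z_2$ is detected by the orientability of $V^{\ti{c}}$; on $D^2$ one can further change $A$ within $\GL_n\R$ by a homotopy, and the standard computation (as in \cite[Theorem~C.3.5]{MS}) identifies $\mu(V,\ti c)$ with $w_1(V^{\ti c})$ reduced appropriately — in particular $\mu$ is even iff $V^{\ti c}$ is orientable, and then $A$ is homotopic to $\Id$ through real invertible matrices, giving the trivial pair. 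One must be slightly careful that on $D^2$ the relevant invariant is the Maslov index and not merely $w_1(V^{\ti c})$, but since $\Si\!=\!D^2$ has one boundary component this is exactly the classical totally-real-boundary-condition statement, which I will simply cite.

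For $|c|\!=\!1$ I would take $c(z)\!=\!-z$ and argue as in the proof of Lemma~\ref{Latop_lmm}: by \cite[Lemma~2.2]{Teh} the equivariant clutching maps $A\!:S^1\!\lra\!\GL_n\C$ with $A(-z)\!=\!\ov{A(z)}$ fall into exactly two homotopy classes, represented by the constant diagonal matrices with at most one entry $-1$. These correspond to $n(V_+,\ti c_+)$ and $(V_-,\ti c_-)\!\oplus\!(n\!-\!1)(V_+,\ti c_+)$, the former being the trivial pair. It remains to see that $\mu$ separates them; I would compute $\mu$ of each representative directly from the doubled bundle $\hat V$ over $\hat\Si\!=\!\P^1$ (the double of $(D^2,\fa)$): the double of $(V_+,\ti c_+)$ is the trivial bundle, with $\mu\!=\!0$, while the double of $(V_-,\ti c_-)$ is $\cO_{\P^1}(\pm1)$ up to sign, with $\mu$ odd — equivalently, $w_2^{\ti c}$ of the two pairs over the quotient torus differ, already recorded in Lemma~\ref{T2equivL_lmm}. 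Hence $\mu(V,\ti c)\!=\!0$ iff $(V,\ti c)$ is the trivial pair, and $\mu$ classifies.

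The main obstacle I anticipate is purely bookkeeping: pinning down the precise relation between $\mu(V,\ti c)$, the parity appearing in the $|c|\!=\!0$ case, and the Maslov index of the doubled bundle in the $|c|\!=\!1$ case, so that a \emph{single} integer invariant $\mu$ works uniformly across both boundary types. The hard content — that there are only two isomorphism classes once the rank is fixed — is already supplied by \cite[Lemma~2.2]{Teh}; the rest is matching that dichotomy to the value of $\mu$.
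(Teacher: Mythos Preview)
Your treatment of the case $c\!=\!\id_{S^1}$ is fine and matches the paper: both simply cite the classical totally-real result \cite[Lemma~C.3.8, Corollary~C.3.9, Theorem~C.3.5]{MS}.

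The case $c\!=\!\fa$, however, contains a genuine error. You assert that real bundle pairs over $(D^2,\fa)$ fall into exactly two isomorphism classes, distinguished by whether $\mu$ is zero or odd. This is false: there are infinitely many, one for each even integer, and $\mu$ takes exactly the even values. The mistake is a conflation of two different classification problems. The dichotomy from \cite[Lemma~2.2]{Teh} that you invoke classifies real bundle pairs over $(S^1,\fa)$, equivalently the equivariant loops $A\!:S^1\!\lra\!\GL_n\C$ with $A(-z)\!=\!\ov{A(z)}$; this is precisely what enters in Lemma~\ref{Latop_lmm} as clutching data for pairs over $(S^1\!\times\!S^1,\id\!\times\!c)$, all of which restrict to Maslov index zero on each $\{t\}\!\times\!D^2$. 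It does \emph{not} classify pairs over $(D^2,\fa)$. Concretely, writing $\ti{c}(z,v)\!=\!(-z,B(z)\bar v)$, the involution condition is $B(-z)\!=\!\ov{B(z)}^{-1}$, not $B(-z)\!=\!\ov{B(z)}$; and two such $B$ give isomorphic pairs over $D^2$ only when related by a gauge transformation that \emph{extends over the disk}, i.e.\ a null-homotopic loop. The extra room to twist by $\pi_1(\GL_n\C)\!\approx\!\Z$ is exactly what your argument misses.

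The paper's proof uses \cite[Lemma~2.2]{Teh} in a different way: it writes any conjugation as $\ti{c}(z,v)\!=\!(-z,A(-z)\ov{A(z)^{-1}v})$ for an \emph{unconstrained} loop $A\!:S^1\!\lra\!\GL_n\C$, observes that the isomorphism class of the pair over $D^2$ depends only on $[A]\!\in\!\pi_1(\GL_n\C)\!\approx\!\Z$, reduces to the representatives $A_d(z)\!=\!\mathrm{diag}(z^d,1,\ldots,1)$, and then computes the double explicitly as $\cO(2d)\oplus(n\!-\!1)\cO$, giving $\mu\!=\!2d$. To repair your argument you would need to carry out essentially this computation; the two-class dichotomy cannot do the job.
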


\begin{proof}
If $c\!=\!\id_{S^1}$, this statement follows immediately from 
\cite[Lemma~C.3.8, Corollary~C.3.9]{MS} and the Normalization Property of 
the Maslov index in \cite[Theorem~C.3.5]{MS}.
Thus, we can assume that $c$ is the antipodal map on $S^1\!\subset\!\C$, 
$V\!=\!D^2\!\times\!\C^n$, and 
$$\ti{c}\!:S^1\!\times\!\C^n\lra S^1\!\times\!\C^n$$
is a conjugation covering~$c$.\\

\noindent 
By \cite[Lemma~2.2]{Teh}, there exists $A\!:S^1\!\lra\!\GL_n\C$ such that 
$$\ti{c}(z,v)=\big(-z,A(-z)\ov{A(z)^{-1}v}\big) \qquad\forall~(z,v)\!\in\!S^1\!\times\!\C^n\,.$$
The loops $A_d\!:S^1\!\lra\!\GL_n\C$ sending $z\!\in\!S^1$ to the diagonal matrix
with the first entry $z^d$ and the remaining entries~1 represent the elements
of $\pi_1(\GL_n\C)\!\approx\!\Z$.
Thus, there exists $d\!\in\!\Z$ so that the~map
$$(S^1,1)\lra \big(\GL_n(\C),\bI_n\big),\qquad z\lra A_d(z)A(z)^{-1},$$
is homotopically trivial (with basepoints fixed) and therefore extends to a smooth~map
\hbox{$\Psi\!:D^2\!\lra\!\GL_n\C$}. 
The bundle isomorphism
$$D^2\!\times\!\C^n\lra D^2\!\times\!\C^n, \qquad (z,v)\lra \big(z,\Psi(z)v\big),$$
identifies the real bundle pair $(D^2\!\times\!\C^n,\ti{c})$ with the real bundle pair
$(D^2\!\times\!\C^n,\ti{c}_{n,d})$, where
$$\ti{c}_{n,d}(z,v)=\big(-z,A_d(-z)\ov{A_d(z)^{-1}v}\big) \qquad\forall~(z,v)\!\in\!S^1\!\times\!\C^n\,.$$
\\

\noindent 
The double of $(D^2\!\times\!\C^n,\ti{c}_{n,d})$,
$$\hat{V}\equiv\big(D_+^2\!\times\!\C^n\sqcup 
D_-^2\!\times\!\bar\C^n\big)\big/\!\sim,\quad
(+,z,v)\sim\big(-,-z,A_d(-z)\ov{A_d(z)^{-1}v}\big)~~~\forall\,(z,v)\!\in\!S^1\!\times\!\C^n\,,$$
has trivializations
\begin{alignat*}{2}
\hat{V}|_{D_+^2}&\lra D_+^2\!\times\!\C^n, &\qquad [+,z,v]&\lra\big([+,z],v\big),\\
\hat{V}|_{D_-^2}&\lra D_-^2\!\times\!\C^n, &\qquad [-,z,v]&\lra\big([-,z],\bar{v}\big).
\end{alignat*}
The overlap between these trivializations is given by
\begin{equation*}\begin{split}
D_+^2\!\cap\!D_-^2\times\C^n&\lra D_+^2\!\cap\!D_-^2\times\C^n, \\
\big([+,z],v\big)&\lra \big([+,z],\ov{A_d(-z)}A_d(z)^{-1}v\big)
=\big([+,z],(-1)^dA_{-2d}(z)v\big).
\end{split}\end{equation*}
Thus, $\hat{V}\!\approx\!\cO(2d)\oplus(n\!-\!1)\cO$,
where $\cO,\cO(2d)\!\!\lra\!\P^1$ are the trivial complex line bundle and 
the $2d$-th power of the hyperplane line bundle, respectively.
If follows that 
$$\mu\big(D^2\!\times\!\C^n,\ti{c}_{n,d}\big)\equiv
\blr{c_1(\hat{V}),[\P^1])}=2d.$$
This establishes both claims of the proposition.
\end{proof}


\noindent
Let $\fJ$ be a complex structure on $\Si$ so that $c$ is real-analytic with respect 
to~$\fJ$.
Every real Cauchy-Riemann operator~$D$ on $(V,\ti{c})\!\lra\!(\Si,c)$ 
as  in~\eref{CRdfn_e} compatible with~$\fJ$  \textsf{doubles} to 
a real Cauchy-Riemann operator
$$\hat{D}\!:W^{1,p}(\hat\Si;\hat{V})\lra
 W^{0,p}\big(\hat\Si;(T^*\hat\Si,\hat\fJ)\!\otimes_{\C}\!\hat{V}\big),$$
where $p\!>\!2$ and $W^{1,p}$ and $W^{0,p}$ denote Sobolev completions with respect to some metrics
on~$\Si$ and~$V$ (doubled to $\hat\Si$ and~$\hat{V}$) 
on the appropriate spaces of bundle sections, by
$$\hat{D}\xi\big|_{\Si^+} =D\xi,\quad
\hat{D}\xi\big|_{\Si^-} =
\breve{c}\circ  D(\breve{c}\!\circ\!\xi\!\circ\!\hat{c})\circ\tnd\hat{c}
\qquad\forall\,\xi\in W^{1,p}(\hat\Si;\hat{V}).$$
Since the image of $\hat{D}$ lies in $W^{0,p}$, 
there is no overlap condition for $\hat{D}$ along $\prt\Si$ to be checked.
This operator satisfies
$$\hat{D}(\breve{c}\!\circ\!\xi\!\circ\!\hat{c})=  \breve{c}\circ\{\hat{D}\xi\}\circ\tnd\hat{c}.$$
In particular, $\hat{D}$ takes the complementary subspaces
\begin{equation*}\begin{split}
W^{1,p}(\hat\Si;\hat{V})^{\ti{c}}\equiv
\big\{\xi\!\in\!W^{1,p}(\hat\Si;\hat{V})\!:~\breve{c}\!\circ\!\xi\!\circ\!\hat{c}=\xi\big\},\\
\fI\,W^{1,p}(\hat\Si;\hat{V})^{\ti{c}}=
\big\{\xi\!\in\!W^{1,p}(\hat\Si;\hat{V})\!:~\breve{c}\!\circ\!\xi\!\circ\!\hat{c}=-\xi\big\}
\end{split}\end{equation*}
of $W^{1,p}(\hat\Si;\hat{V})$ to the complementary subspaces
\begin{equation*}\begin{split}
W^{0,p}\big(\hat\Si;(T^*\hat\Si,\hat\fJ)\!\otimes_{\C}\!\hat{V}\big)^{\ti{c}}\equiv
\big\{\eta\!\in\!W^{0,p}\big(\hat\Si;(T^*\hat\Si,\hat\fJ)\!\otimes_{\C}\!\hat{V}\big)\!:~
\ti{c}\!\circ\!\eta\!\circ\!\tnd\hat{c}=\eta\big\},\\
\fI\,W^{0,p}\big(\hat\Si;(T^*\hat\Si,\hat\fJ)\!\otimes_{\C}\!\hat{V}\big)^{\ti{c}}=
\big\{\eta\!\in\!W^{0,p}\big(\hat\Si;(T^*\hat\Si,\hat\fJ)\!\otimes_{\C}\!\hat{V}\big)\!:~
\ti{c}\!\circ\!\eta\!\circ\!\tnd\hat{c}=-\eta\big\},
\end{split}\end{equation*}
respectively, of $W^{0,p}(\hat\Si;(T^*\hat\Si,\hat\fJ)\!\otimes_{\C}\!\hat{V})$.

\begin{prop}\label{etaFred_prp}
Let $(\Si,c)$ be an oriented sh-surface, $(V,\ti{c})\!\lra\!(\Si,c)$ be a real bundle pair, 
$\fJ\!\in\!\cJ_c$, and $p\!>\!2$.
A real Cauchy-Riemann operator~$D$ on~$(V,\ti{c})$ compatible with $\fJ$
induces a Fredholm operator between 
$W^{1,p}$ and $W^p$-completions of its domain and target, respectively, with
$$\ind_{\R}D=\mu(V,\ti{c})+\big(1\!-\!g(\hat\Si)\big)(\rk_{\C}V),$$
where $g(\hat\Si)$ is the genus of $\hat\Si$.
Furthermore, the kernel of the standard $\dbar$-operator on the real bundle pair
$(\Si\!\times\!\C^n,\ti{c})\!\lra\!(\Si,c)$
with $\ti{c}$ induced by the standard conjugation on~$\C^n$
consists of constant $\R^n$-valued functions on~$\Si$;
this operator is surjective if $\Si\!=\!D^2$.
\end{prop}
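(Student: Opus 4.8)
The plan is to deduce everything from the doubling construction just described. Since $\fJ\!\in\!\cJ_c$, Corollary~\ref{dblJ_cor} provides a doubled complex structure~$\hat\fJ$ on~$\hat\Si$, and the given construction produces a doubled operator $\hat{D}$ on the closed Riemann surface~$(\hat\Si,\hat\fJ)$ which is a real Cauchy-Riemann operator on the complex bundle~$\hat{V}$, hence Fredholm by the standard closed-surface theory of \cite[Appendix~C]{MS}. The key point is that $\hat{D}$ commutes with the conjugations, so it respects the $\pm1$-eigenspace decompositions of domain and target recorded before the statement. First I would observe that the restriction of~$\hat{D}$ to the $+1$-eigenspace $W^{1,p}(\hat\Si;\hat{V})^{\ti{c}}$ is, via the tautological identification of a $\breve{c}$-invariant section of~$\hat{V}$ with a section of~$V$ over~$\Si^+\!=\!\Si$ satisfying the reality condition in~\eref{CRdfn_e}, precisely the original operator~$D$ on its $W^{1,p}$-completion; likewise on the targets. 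Multiplication by~$\fI$ interchanges the $+1$- and $-1$-eigenspaces of the domain, and $D$ intertwines the resulting two restrictions of~$\hat D$; therefore
\BE{indsum_e}
\ind_{\R}\hat{D}=2\ind_{\R}D,\qquad \ind_{\C}\hat{D}=\ind_{\R}D,
\EE
since $\ind_{\R}\hat D=2\ind_{\C}\hat D$. In particular $D$ is Fredholm because $\hat D$ is, and its index is half that of~$\hat D$.

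Next I would compute $\ind_{\C}\hat{D}$ by Riemann-Roch on the closed surface: $\ind_{\C}\hat{D}=\lr{c_1(\hat{V}),[\hat\Si]}+(1\!-\!g(\hat\Si))\rk_{\C}\hat{V}$. By the definition of the Maslov index given just before the statement, $\lr{c_1(\hat{V}),[\hat\Si]}=\mu(V,\ti{c})$, and $\rk_{\C}\hat{V}=\rk_{\C}V$; combining with~\eref{indsum_e} gives the asserted formula for $\ind_{\R}D$. Some care is needed to check that the doubled operator really is a genuine (smooth, or at least $W^{0,p}$-coefficient) Cauchy-Riemann operator on~$\hat\Si$: the zeroth-order term~$A$ of~$D$ reflects across~$\prt\Si$ to a term on~$\Si^-$, and one must verify that no distributional contribution is created along the seam~$\prt\Si$. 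This follows because $\hat{D}$ takes values in $W^{0,p}$ by construction, so there is no matching condition along~$\prt\Si$ to impose — this is exactly the point flagged in the paragraph preceding the statement, and I expect verifying it carefully (that $\hat D\xi$ has no singular part along $\prt\Si$ for $\xi\!\in\!W^{1,p}$, using real-analyticity of~$c$ to control the reflected $\bp$-part) to be the main technical obstacle.

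Finally, for the trivial real bundle pair $(\Si\!\times\!\C^n,\ti{c})$, I would again pass to the double: $\hat V\!\cong\!\hat\Si\!\times\!\C^n$ is the trivial bundle (the clutching along~$\prt\Si$ is by the identity after the reflection), and the doubled $\dbar$-operator is the standard one, whose kernel consists of the constant $\C^n$-valued functions on~$\hat\Si$. Intersecting with the $+1$-eigenspace $W^{1,p}(\hat\Si;\hat V)^{\ti c}$ imposes $v\!=\!\bar v$, i.e. the constant must be $\R^n$-valued; restricting to~$\Si$ identifies this with the constant $\R^n$-valued functions on~$\Si$, as claimed. For surjectivity when $\Si\!=\!D^2$, note $\hat\Si\!=\!\P^1$, so $\hat V$ is trivial and the doubled $\dbar$-operator on $\P^1\!\times\!\C^n$ is surjective (its cokernel is $H^1(\P^1;\cO)^{\oplus n}\!=\!0$); since surjectivity of~$\hat D$ is equivalent to surjectivity of both its eigenspace restrictions, and one restriction is the original operator~$D$, we conclude $D$ is surjective. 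This last step also supplies the surjectivity input invoked in part~(2) of the proof of Corollary~\ref{dblJ_cor}.
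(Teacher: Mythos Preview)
Your approach is essentially the paper's own, and the Fredholm claim, the kernel computation for the trivial pair, and the disk surjectivity are all handled correctly. There is, however, a genuine gap in your index computation.

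You write that multiplication by~$\fI$ interchanges the $\pm1$-eigenspaces and that ``$D$ intertwines the resulting two restrictions of~$\hat D$'', concluding $\ind_{\R}\hat D=2\ind_{\R}D$ and then $\ind_{\C}\hat D=\ind_{\R}D$. The first assertion (interchange of eigenspaces) is fine, but the intertwining claim requires $\hat D(\fI\xi)=\fI\hat D(\xi)$, i.e.~that $\hat D$ be $\C$-linear. A real Cauchy-Riemann operator $D=\bp+A$ has $A$ only $\R$-linear, so $\hat D$ need not commute with~$\fI$; for the same reason $\ind_{\C}\hat D$ is not even defined. Thus neither displayed equality in your~\eref{indsum_e} is justified as written.

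The paper repairs this by first replacing $D$ with its $\C$-linear part~$D^{1,0}$, which has the same real index (the anti-linear part of~$A$ is a compact zeroth-order perturbation). One checks that $\hat D^{1,0}$ still satisfies the $\breve c$-intertwining property, hence preserves the eigenspaces; now multiplication by~$\fI$ genuinely conjugates $(\hat D^{1,0})^+$ with $(\hat D^{1,0})^-$, giving
\[
\ind_{\R}D=\ind_{\R}D^{1,0}=\ind_{\R}(\hat D^{1,0})^+
=\tfrac12\ind_{\R}\hat D^{1,0}=\ind_{\C}\hat D^{1,0},
\]
and Riemann-Roch applies to the last quantity. Inserting this one step fixes your argument completely.
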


\begin{proof}
Since $c$ is real-analytic with respect to the complex structure~$\fJ$ corresponding to~$D$,
we have a commutative diagram
$$\xymatrix{W^{1,p}(\hat\Si;\hat{V})^{\ti{c}}\ar[r]^>>>>>{\hat{D}}\ar[d] &
W^{0,p}\big(\hat\Si;(T^*\hat\Si,\hat\fJ)\!\otimes_{\C}\!\hat{V}\big)^{\ti{c}}\ar[d]\\
W^{1,p}(\Si;V)^c\ar[r]^>>>>>{D} &
W^{0,p}\big(\Si;(T^*\Si,\fJ)\!\otimes_{\C}\!V\big)
}$$
where the vertical arrows are the restriction isomorphisms $\xi\!\lra\!\xi|_{\Si^+}$.
Since $\hat{D}$ preserves the $\pm1$-eigenspaces of~$\ti{c}$, $W^{k,p}$ and $\fI W^{k,p}$ above,
this diagram induces isomorphisms
$$\ker \hat{D}^+\approx\ker D,\qquad \Im\,\hat{D}^+\approx\Im\,D, \qquad
\cok\,\hat{D}^+\approx\cok\, D,$$
where $\hat{D}^{\pm}$ is the restriction of $\hat{D}$ to the $\pm1$-eigenspace of~$\ti{c}$.
Since $\hat{D}$ is Fredholm, it follows that so is~$D$.
The index of~$D$ is the same as the index of its $\C$-linear part~$D^{1,0}$.
Since multiplication by~$\fI$ commutes with~$\hat{D}^{1,0}$,
it induces isomorphisms 
$$\ker(\hat{D}^{1,0})^+\lra\ker(\hat{D}^{1,0})^-, \qquad 
\cok\,(\hat{D}^{1,0})^+\lra\cok\,(\hat{D}^{1,0})^-.$$
Thus,
$$\ind_{\R}D=\ind_{\R}D^{1,0}
=\ind_{\R}(\hat{D}^{1,0})^+
=\frac12\ind_{\R}\hat{D}^{1,0}
=\lr{c_1(\hat{V}),[\hat\Si]}+(1\!-\!g(\hat\Si))(\rk_{\C}V);$$
the last equality follows from Riemann-Roch for a closed complex curve;
see \cite[Theorem~C.1.10(ii)]{MS}, for example.\\

\noindent
If $D$ is the standard $\dbar$-operator on the trivial real bundle pair
$(\Si\!\times\!\C^n,\ti{c}_n)\!\lra\!(\Si,c)$,
$\hat{D}$ is the standard $\dbar$-operator in a trivial vector bundle over~$\hat\Si$
with the standard conjugation. This implies the last claim.
\end{proof}

\begin{remark}\label{notFred_rem}
We now show that the standard $\dbar$-operator $\dbar_0$ on 
$(D^2\!\times\!\C,\ti{c})\!\lra\!(D^2,c)$, where 
$\ti{c}$ is the lift of the involution~$c$ on $S^1\!=\!\prt\Si$ induced by the standard
conjugation in~$\C$, has infinite-dimensional cokernel if $c$ is not real-analytic
with respect to the standard complex structure~$\fJ_0$ on~$D^2$.
Specifically, we show that 
$$\big\{\bar{z}^{2k-1}\tnd\bar{z}:\,k\!\in\!\Z^+\big\}
\cap\dbar_0\big(W^{1,p}(D^2)^{\ti{c}}\big)=\{0\}\subset W^{0,p}(D^2;(T^*D^2,\fJ_0))$$
if $c$ is not real-analytic.
If $k\!\in\!\Z^+$,
$$\big\{f\!\in\!W^{1,p}(D^2)\!:\dbar f=k\bar{z}^{2k-1}\tnd\bar{z}\big\}
=\big\{\Re\,z^{2k}\!+\!h\!:~h\!\in\!\Hol(D^2)\big\},$$
where $\Hol(D^2)$ is the space of continuous maps on the closed disk $D^2$ that 
are holomorphic in the interior.
The condition that $\Re\,z^{2k}\!+\!h$ lies in $W^{1,p}(D^2)^{\ti{c}}$ is equivalent~to
$$\Re\big(z^{2k}\!+\!h(z)\big)=\Re\big(c(z)^{2k}\!+\!h(c(z))\big), \qquad
\Im\,h(z)=-\Im\,h(c(z)).$$
The functions $\Re(z^{2k}\!+\!h(z))$ and $\Im\,h(z)$ are real-analytic on~$S^1$.
If $c$ is not real-analytic, the above conditions imply that 
\BE{analcond_e}\Re\big(z^{2k}\!+\!h(z)\big)=C, 
\quad  \Im\,h(z)=0\qquad\forall\,z\!\in\!S^1,\EE
for some $C\!\in\!\R$, which we can take to be~0.
Indeed, if $f(z)=\Re(z^{2k}\!+\!h(z)),\Im\,h(z)$ were not constant on~$S^1$,
we could choose an analytic coordinate~$\th$ near any point~$\th_0$ on~$S^1$ 
and an analytic coordinate~$\vt$ near the point $\vt_0\!=\!c(\th_0)$ on~$S^1$ 
so~that 
$$f(\vt)-f(\vt_0)=\pm\vt^m, \qquad f(c(\th))-f(\vt_0)=\pm\th^n$$
for some $m,n\!\in\!\Z^+$. 
Since $c$ is smooth, $n|m$ and so $\vt(c(\th))=\pm\th^{n/m}$ is real-analytic at $\th_0$.  
This confirms~\eref{analcond_e}.
Finally, \eref{analcond_e} with $C\!=\!0$ implies that 
$$h^{\lr{m}}(0)=\frac{m!}{2\pi\fI}\oint_{|z|=1}\frac{-\Re\,z^{2k}\tnd z}{z^{m+1}}
=\begin{cases} -\frac{(2k)!}{2}&\hbox{if}~m\!=\!2k;\\ 0,&\hbox{otherwise}. 
\end{cases}$$
Thus, $h(z)=-\frac12z^{2k}$, which contradicts \eref{analcond_e}.
\end{remark}

\section{Proofs of main statements} 
\label{mainpf_sec}

\noindent
We begin this section by recalling some standard facts concerning determinant lines of 
real Cauchy-Riemann operators, rephrasing the first half of \cite[Section~2]{Ge}
in terms of real bundle pairs $(V,\ti{c})\!\lra\!(\Si,c)$,
instead of bundles $(V,V^{\ti{c}})\!\lra\!(\Si,\prt\Si)$ of the $|c|_1\!=\!0$ case.
We then deduce Theorem~\ref{main_thm} from \cite[Theorem~1.1]{Ge} and 
Lemma~\ref{eta_lem}.
The latter treats a very special case of Theorem~\ref{main_thm} and is the analogue
of \cite[Lemmas~3.4,~3.6]{Ge} for the non-trivial involutions~$c_i$ on~$(\prt\Si)_i$.
We conclude this section with a set of lemmas extending \cite[Lemmas~2.2-2.4]{Ge} 
to arbitrary boundary involutions~$c$.\\

\noindent
A short exact sequence of Fredholm operators
$$\begin{CD}
0
@>>>X'@>>>X@>>>X''@>>>0 \\
@. @V V D' V@VV D V@VV D'' V@.\\
0@>>> Y'@>>>Y@>>>Y''@>>>0
\end{CD}$$
determines a canonical isomorphism
\BE{sum} \det D\approx (\det D')\otimes (\det D'').\EE
For a continuous family of Fredholm operators $D_t:X_t\!\lra\!Y_t$  parametrized by
a topological space $B$, the determinant lines $\det D_t$ form a line bundle
over $B$; see \cite[Section A.2]{MS} and~\cite{detLB}. For a short exact sequence of such
families, the isomorphisms (\ref{sum}) give rise to a canonical isomorphism
between determinant line bundles.\\

\noindent
Let $(\Si,c)$ be an oriented, possibly nodal, sh-surface, with nodes away from the boundary
and $\fJ\!\in\!\cJ_c$.
Let $\pi\!:\wt\Si\!\lra\!\Si$ be the normalization of~$\Si$.
Fix an ordering of the boundary components of~$\Si$ (and thus of~$\wt\Si$)
and of the nodes of~$\Si$. 
A real Cauchy-Riemann operator~$D$ on a real bundle pair $(V,\ti{c})\!\lra\!(\Si,c)$ corresponds 
to a real Cauchy-Riemann operator $\wt{D}=\bigoplus_iD^i$ on 
$\pi^*(V,c)\!\lra\!(\wt\Si,c)$, 
where the sum is taken over the components of~$\wt\Si$.
Thus, by~\eqref{sum}, there is a canonical isomorphism
$$\det \wt{D}\approx\bigotimes_i(\det D^i)  .$$
On the other hand, gluing together punctured disks around the nodes~$z_j$
of~$\Si$, we obtain a smooth surface $\Si^{\ve}$ and 
a real Cauchy-Riemann operator $D^{\ve}$ over $(\Si^{\ve},c)$ 
for a gluing parameter~$\ve$. 
Similarly to \cite[Appendix~D.4]{Huang} and \cite[Section 3.2]{EES}, 
for every sufficiently small $\ve$ there is a canonical (up to homotopy) isomorphism
\BE{dets} 
\det D^\ve \approx(\det\wt{D})\otimes
\La_{\R}^{\top}\bigg(\bigoplus_jV_{z_j}\bigg)^*.\EE
Moreover, the gluing maps satisfy an associativity property: the isomorphism~\eqref{dets} 
is independent of the order in which we smooth the nodes.

\begin{remark}\label{con_rmk}
The space of real Cauchy-Riemann operators on $(V,\ti{c})\!\lra\!(\Si,c)$ is
contractible; thus,  a choice of orientation on one determinant line canonically
induces orientations on the rest.  Any two families of real
Cauchy-Riemann operators on a family $(V_t,\ti{c}_t)\!\lra\!(\Si_t,c_t)$ are
fiberwise homotopic. This implies that their determinant bundles have the same
Stiefel-Whitney classes.
\end{remark}

\begin{proof}[{\bf\emph{Proof of Theorem \ref{main_thm}}}]
By Lemma \ref{def_lm}, we can assume that~$\psi$ restricts to the identity in a
neighborhood of the boundary.
For each boundary component $(\prt\Si)_i$ of~$\Si$ with $|c_i|\!=\!1$, let
\BE{ui_eq}U_i= S^1\times (\prt\Si)_i \times [0,2\eps]
\approx S^1\times\Cyl\EE
be a neighborhood of $S^1\!\times\!(\prt\Si)_i$ in~$M_{\psi}$ and 
$$\wt{U}_i=\bI\times(\prt\Si)_i\times [0,2\eps]$$
be the corresponding neighborhood of $\bI\!\times\!(\prt\Si)_i\subset\bI\!\times\!\Si$.
By the proof of \cite[Proposition~3.1]{Ge}, we can assume that the identification~\eref{ui_eq}  
commutes with the complex structures on the fibers over~$S^1$.\\

\noindent
By \cite[Lemma 2.2]{Teh},
$$(V,\ti{c})|_{S^1\times(\prt\Si)_i} \approx 
\big(\bI\!\times\!(S^1\!\times\!\C^n,\ti{c}_i)\big)\big/\sim, \qquad
(0,z,v)\sim\big(1,z,g_i(z)v\big)\quad\forall\,(z,v)\!\in\!(\prt\Si)_i\!\times\!\C^n,$$
where $\ti{c}_i$ denotes the lift of $c_i$ induced by the standard conjugation on~$\C^n$
and
$$g_i\!:(\prt\Si)_i\lra\GL_n\C \qquad\hbox{s.t.}\quad g_i(c(z))=\ov{g_i(z)}~~\forall~z\!\in\!(\prt\Si)_i\,;$$
in fact, $g_i$ can be taken to be a constant function with values in the diagonal matrices,
with at most one diagonal entry~-1 and the remaining diagonal entries~1.
Since such $g_i$ is homotopically trivial, it can be extended to a~map
$$g_i\!:  (\prt\Si)_i\!\times\![0,2\eps] \lra \GL_n\C\qquad
\text{s.t.}\qquad g_i|_{(\prt\Si)_i\times[\eps/2,2\eps]}=\Id.$$\\

\noindent
For all $i$ with $|c_i|\!=\!1$ and $t\!\in\!S^1$,  pinch $t\!\times\!\Si$ along the curve
$t\!\times\!(\prt\Si)_i\!\times\!\eps$ to obtain a nodal curve $\Si^s$
with normalization consisting of a disjoint union of disks $D_i^2$ with $|c_i|\!=\!1$ and 
a Riemann surface~$\Si'$, whose boundary components are the boundary components 
$(\prt\Si)_i$ of $\Si$ with $|c_i|\!=\!0$, with special points $0\!\in\!D_i^2$ and 
$p_i\!\in\!\Si'$ with $|c_i|\!=\!1$. 
The real bundle pair $(V,\ti{c})$ descends to a real bundle pair over the family of
nodal curves as in \cite[Remark~2.1]{Ge}, inducing bundle pairs
$$(V',\ti{c}')\lra S^1\!\times\!(\Si',\prt\Si') \quad \text{and}\quad 
(V_i,\ti{c}_i)\equiv \bI\times_{g_i}
\big(D^2\!\times\!\C^n,\ti{c}_i\big)\lra  S^1\!\times\!(D^2_i,c_i),$$
with $\mu(V_i,\ti{c}_i)\!=\!0$ and with
isomorphisms $V'|_{t\times p_i}\approx\C^n\approx V_i|_{t\times 0}$ 
for every $t\!\in\!S^1$.\\

\noindent
Taking a family of real Cauchy-Riemann operators $D'$ on $(V',\ti{c}')$ and 
gluing it to a family of real Cauchy-Riemann operators $D_i$ on $(V_i,\ti{c}_i)$, 
we obtain a family of real Cauchy-Riemann operators $D^{\ve}$ on~$(V,\ti{c})$. 
By Remark \ref{con_rmk} and~\eqref{dets},
\BE{orientsplit_e}
\det D_{(V,\ti{c})}\approx \det D^{\ve}\approx 
(\det D')\otimes
\bigotimes_{|c_i|=1}\!\!\big((\det D_i)\!\otimes\!\La_{\R}^{\top}(V'|_{(t,p_i)})\big).\EE
Thus,
$$w_1(\det D)= w_1(\det D')+
\sum_{|c_i|=1}\!\!\big(w_1(\det D_i)+w_1(V'|_{S^1\times p_i})\big).$$
The complex structure on $V'|_{S_1\times p_i}$
induces a canonical orientation on this space; 
in particular, $w_1(V'|_{S_1\times p_i})\!=\!0$. 
The term $w_1(\det D')$ is given  by \cite[Theorem~1.1]{Ge}.
Therefore, the problem reduces to the families of operators $D_i$ on $(V_i,\ti{c}_i)$ over  
$S^1\!\times\!(D_i^2,c_i)$.  
Theorem~\ref{main_thm} now follows from Lemma~\ref{eta_lem}.
\end{proof}

\begin{lem}\label{eta_lem} 
Let $c\!:S^1\!\times\!\prt D^2\!\lra\!S^1\!\times\!\prt D^2$ be 
a fiberwise orientation-preserving involution different from the identity and  
$(V,\ti{c})\!\lra\!(S^1\!\times\!D^2,c)$ be a real bundle pair
with $\mu(V,\ti{c})\!=\!0$ on each fiber.
If $D$ is any family of real Cauchy-Riemann operators on $(V,\ti{c})$ over~$S^1$, then
$$\lr{w_1(\det D),S^1}= \blr{w_2^{\La_{\C}^{\top}\ti{c}}(\La_{\C}^{\top}V),
[S^1\!\times\!\prt D^2]^c}.$$
\end{lem}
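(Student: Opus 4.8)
The plan is to reduce $(V,\ti{c})$ to two explicit models over $(S^1\!\times\!D^2,\id\!\times\!\fa)$ and verify the asserted identity term by term in each. First I would normalize~$c$: for each $t\!\in\!S^1$ the involution $c_t$ on $\prt D^2\!\approx\!S^1$ is orientation-preserving, hence either $\id$ or fixed-point free, and since $c\!\neq\!\id$ and $S^1$ is connected it is fixed-point free, so conjugate to~$\fa$, for every~$t$; as the space of such involutions is simply connected, a routine argument (cf.~the reduction in the proof of Theorem~\ref{main_thm} and \cite[Proposition~3.1]{Ge}) produces a continuous family of identifications $\prt D^2\!\approx\!S^1$ turning $c$ into $\id\!\times\!\fa$, so that $[S^1\!\times\!\prt D^2]^c\!=\![S^1\!\times\!S^1]^{\id\times\fa}$, and I would assume this henceforth.

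Next I would classify $(V,\ti{c})$. Since $\mu(V,\ti{c})\!=\!0$ on each fiber $\{t\}\!\times\!D^2$, Lemma~\ref{Maslovind_lmm} shows that its restriction to each fiber is isomorphic to the trivial real bundle pair $(D^2\!\times\!\C^n,\ti{c}_n)$, $n\!=\!\rk_{\C}V$. Writing $S^1\!\times\!D^2$ as $(\bI\!\times\!D^2)/(0,\cdot)\!\sim\!(1,\cdot)$, trivializing over $\bI\!\times\!D^2$, and using \cite[Lemma~2.2]{Teh} over the boundary cylinder $\bI\!\times\!\prt D^2$ to make $\ti{c}$ standard there (exactly as in the proof of Lemma~\ref{Latop_lmm}), one sees that $(V,\ti{c})$ is the clutching of $(D^2\!\times\!\C^n,\ti{c}_n)$ by a map $g\!:D^2\!\lra\!\GL_n\C$ with $g(-z)\!=\!\ov{g(z)}$ on $S^1$; since $\pi_2(\GL_n\C)\!=\!0$, the class of such a~$g$ is determined by $g|_{S^1}$, and by \cite[Lemma~2.2]{Teh} there are exactly two homotopy classes of maps $S^1\!\lra\!\GL_n\C$ satisfying $A(-z)\!=\!\ov{A(z)}$, realized by the constant maps $\bI_n$ and a diagonal matrix with one entry~$-1$ and the rest~$1$. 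Setting
$$\mathcal{V}_{\pm}=\big(\bI\!\times\!D^2\!\times\!\C\big)\big/\big((0,z,v)\!\sim\!(1,z,\pm v)\big)\lra(S^1\!\times\!D^2,\id\!\times\!\fa),$$
with conjugation induced by the standard one on~$\C$, this yields $(V,\ti{c})\!\approx\!\mathcal{V}_+^{\oplus n}$ (the trivial real bundle pair) or $(V,\ti{c})\!\approx\!\mathcal{V}_-\!\oplus\!\mathcal{V}_+^{\oplus(n-1)}$; both have $\mu\!=\!0$ on fibers, and over the boundary torus they restrict to $nV_+$ and $V_-\!\oplus\!(n\!-\!1)V_+$ in the notation of Lemma~\ref{T2equivL_lmm}.

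I would then evaluate both sides in each model. By Lemma~\ref{Latop_lmm} the right-hand side equals $\blr{w_2^{\ti{c}}(V),[S^1\!\times\!\prt D^2]^c}$, which by~\eref{w2sum_e} and Lemma~\ref{T2equivL_lmm} is $0$ for $\mathcal{V}_+^{\oplus n}$ and $1$ for $\mathcal{V}_-\!\oplus\!\mathcal{V}_+^{\oplus(n-1)}$. For the left-hand side, Remark~\ref{con_rmk} lets me compute $w_1(\det D)$ for the family of standard $\dbar$-operators (for which $c\!=\!\fa$ is real-analytic). On $\mathcal{V}_+^{\oplus n}$ this is the constant family on $(D^2\!\times\!\C^n,\ti{c}_n)$, which by Proposition~\ref{etaFred_prp} is surjective with kernel the constant $\R^n$-valued functions; hence $\det D$ is the trivial line bundle over $S^1$ and $\lr{w_1(\det D),S^1}\!=\!0$. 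On $\mathcal{V}_-\!\oplus\!\mathcal{V}_+^{\oplus(n-1)}$, \eref{sum} gives $\det D\!\approx\!(\det D_{\mathcal{V}_-})\!\otimes\!(\det D_{\mathcal{V}_+})^{\otimes(n-1)}$ with the $\mathcal{V}_+$-factors trivial by the previous case; and for the $\dbar$-family on $\mathcal{V}_-$, Proposition~\ref{etaFred_prp} again gives vanishing cokernel and kernel the constant real functions, so $\det D_{\mathcal{V}_-}$ is the line bundle over $S^1$ whose monodromy, induced by the clutching $v\!\mapsto\!-v$, is $a\!\mapsto\!-a$ — the Mobius bundle. Thus $\lr{w_1(\det D),S^1}\!=\!1$, matching the right-hand side in both cases.

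I expect the main obstacle to be the classification step: adapting \cite[Lemma~2.2]{Teh} from the boundary circle to the solid torus $S^1\!\times\!D^2$ and confirming that the clutching datum is faithfully recorded by the two boundary homotopy classes — here $\mu(V,\ti{c})\!=\!0$ forces fiberwise triviality via Lemma~\ref{Maslovind_lmm}, and $\pi_2(\GL_n\C)\!=\!0$ removes the ambiguity in the fill-in over~$D^2$. Some care is also needed for the normalization of~$c$. Once the two models are isolated, what remains is the elementary fact that the index bundle of the standard $\dbar$-operator over a disk with a crosscap is the line of constant real-valued functions.
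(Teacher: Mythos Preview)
Your proposal is correct and follows essentially the same approach as the paper's own proof: reduce $(V,\ti{c})$ via Lemma~\ref{Maslovind_lmm} and \cite[Lemma~2.2]{Teh} to the two models $nV_+$ and $V_-\!\oplus\!(n\!-\!1)V_+$, then verify the identity in each by computing the index bundle of the standard $\dbar$-family with Proposition~\ref{etaFred_prp} and the equivariant $w_2$ with Lemmas~\ref{T2equivL_lmm} and~\ref{Latop_lmm}. Your explicit normalization of~$c$ and appeal to $\pi_2(\GL_n\C)\!=\!0$ are more careful than the paper's treatment of the clutching step; your citation of~\eref{w2sum_e} is unnecessary, as Lemma~\ref{T2equivL_lmm} already gives the $w_2$-values for both models directly.
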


\begin{proof}  
Let $n\!=\!\rk_{\C}V$. Denote by 
$$(V_{\pm},\ti{c}_{\pm})\lra (S^1\!\times\!D^2,c)$$
the real bundle pairs with 
$$V_{\pm}=\big(\bI\!\times\!D^2\!\times\!\C\big)\big/\!\!\sim, \qquad
(0,z,v)\sim(1,z,\pm v)~~~\forall~z\!\in\!D^2,\,v\!\in\!\C,$$
with the involutions induced by the standard conjugation on~$\C$.\\

\noindent
By Proposition~\ref{etaFred_prp}, the standard $\bp_0$-operator on the trivial bundle pair
$$(t\!\times\!D^2\!\times\!\C^n,\ti{c}_n)\!\lra\!(t\!\times\!D^2,c|_{t\times S^1}), 
\qquad t\in S^1,$$ 
is surjective and its kernel consists of constant real-valued functions.
Thus, the index bundle of the family of the standard $\bp_0$-operators on 
the trivial rank~$n$ real bundle pair $nV_+$ (direct sum of $n$ copies of~$V_+$)
is isomorphic to $S^1\!\times\!\R^n$ by evaluation at a boundary point and in particular 
is orientable. 
On the other hand, the index bundle of the family of the standard $\bp_0$-operators on
$$V_-\!\oplus\!(n\!-\!1)V_+\lra S^1\!\times\!D^2$$
is the direct sum of the Mobius line bundle over~$S^1$ and $n\!-\!1$ copies of the trivial
real line bundle; in particular, it is non-orientable.
By Remark~\ref{con_rmk}, the determinant bundle of any family of real Cauchy-Riemann operators 
on a trivializable real bundle pair as in the statement of the lemma is thus orientable
and on a real bundle isomorphic to $V_-\!\oplus\!(n\!-\!1)V_+$ is~not.\\


\noindent
By Lemma~\ref{Maslovind_lmm}, every bundle pair $(V,\ti{c})$ as in
the statement of the lemma is isomorphic the bundle pair
$$ \big(\bI\!\times\!D^2\!\times\!\C\big)\big/\sim, \quad
(0,z,v)\sim(1,z,A(z)v)~~~\forall~z\!\in\!D^2,\,v\!\in\!\C,$$
for some smooth map $A\!:D^2\!\lra\!\GL_n\C$ such that 
$A(c(z))\!=\!\ov{A(z)}$ for all~$z\!\in\!S^1$.
By \cite[Lemma~2.2]{Teh}, there are two homotopy classes of such maps;
they are represented by  the constant maps with values in the diagonal matrices,
with at most one diagonal entry~-1 and the remaining diagonal entries~1.
Lemmas~\ref{T2equivL_lmm} and~\ref{Latop_lmm} then imply that 
$w_2^{\La_{\C}^{\top}\ti{c}}(\La_{\C}^{\top}V)$ classifies 
the rank~$n$ real bundle pairs $(V,\ti{c})$ as in the statement of Lemma~\ref{eta_lem}.
Thus, if $w_2^{\La_{\C}^{\top}\ti{c}}(\La_{\C}^{\top}V)\!=\!0$, $(V,\ti{c})$ is trivializable;
by the previous paragraph, $\det D$ is orientable in this case.
On the other hand, if $w_2^{\La_{\C}^{\top}\ti{c}}(\La_{\C}^{\top}V)\!\neq\!0$, $(V,\ti{c})$ is isomorphic to 
the twisted pair of the previous paragraph and thus $\det D$  is not orientable.
Combining the two cases, we obtain the claim.
\end{proof}

\noindent
The next three lemmas are used in the proof of Theorem~\ref{main_thm} and 
in some of its applications.
In particular, in some situations they allow us to replace arbitrary diffeomorphisms of $(\Si,c)$
by those that restrict to the identity near~$\prt\Si$.

\begin{lem}\label{iso_lm} 
Let $(\Si,c)$ be an oriented sh-surface.
For every $h_0\!\in\cD_c$, there exists a path~$h_t$ in~$\cD_c$ starting at~$h_0$
such that~$h_1$ restricts to the identity
on a neighborhood of $\prt\Si$ in~$\Si$.
\end{lem}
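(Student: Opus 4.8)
The plan is to carry out the deformation in two stages: first I would isotope $h_0$, through elements of $\cD_c$, to a diffeomorphism whose restriction to $\prt\Si$ is the identity; then I would use a collar of $\prt\Si$ to push the remaining discrepancy off a neighborhood of the boundary. The guiding observation is that the commutation condition with $c$ constrains only the restriction to $\prt\Si$, so once that restriction has been made the identity all further deformations are unconstrained (apart from being orientation-preserving and preserving each boundary component, which is automatic for anything isotopic to the identity).

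For the first stage, the restriction of an element of $\cD_c$ to a boundary component $(\prt\Si)_i$ lies in $\Diff^+(S^1)$, and in the subgroup $\Diff^+_\fa(S^1)$ of diffeomorphisms commuting with the antipodal map when $|c_i|\!=\!1$. Both groups are connected: $\Diff^+(S^1)$ classically, and for $\Diff^+_\fa(S^1)$ one writes $S^1\!=\!\R/2\pi\Z$ with $\fa$ translation by~$\pi$, lifts such a diffeomorphism to $\ti g\!:\R\!\lra\!\R$ with $\ti g(\th\!+\!\pi)\!=\!\ti g(\th)\!+\!\pi$, and notes that $\th\!\mapsto\!\ti g(\th)\!-\!\th$ is then a $\pi$-periodic function with derivative $>\!-1$, the space of which is convex. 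Hence there is a smooth path $\ell_t$, $t\!\in\![0,1]$, in the group $\Diff_c^+(\prt\Si)$ of orientation-preserving diffeomorphisms of $\prt\Si$ preserving each component and commuting with~$c$, from $\id_{\prt\Si}$ to $(h_0|_{\prt\Si})^{-1}$ (the latter again commutes with~$c$). Picking a collar $\nu\!:\prt\Si\!\times\![0,2)\!\lhra\!\Si$ and a cutoff $\be\!:[0,2)\!\lra\![0,1]$ that is $1$ near~$0$ and $0$ on $[1,2)$, the assignment $L_t(\nu(x,s))\!=\!\nu(\ell_{\be(s)t}(x),s)$, extended by the identity off the collar, is a smooth path $L_t$ in $\cD_c$ with $L_0\!=\!\id$ and $L_t|_{\prt\Si}\!=\!\ell_t$; then $t\!\mapsto\!h_0\!\circ\!L_t$ is a path in $\cD_c$ from $h_0$ to a diffeomorphism restricting to the identity on $\prt\Si$ (here one uses that $h_0|_{\prt\Si}\!\circ\!\ell_t$ commutes with~$c$ since both factors do). So from now on I may assume $h_0|_{\prt\Si}\!=\!\id$.

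For the second stage, with $h_0|_{\prt\Si}\!=\!\id$, both $\nu$ and $h_0\!\circ\!\nu$ are collars of $\prt\Si$ in~$\Si$. By the standard uniqueness of collars up to ambient isotopy rel boundary (as in the proof of \cite[Lemma~2.2]{Ge}), there are $\de\!\in\!(0,2)$ and an isotopy $\Theta_t$ of~$\Si$, $t\!\in\![0,1]$, with $\Theta_0\!=\!\id$, $\Theta_t|_{\prt\Si}\!=\!\id$ for all~$t$, and $\Theta_1\!\circ\!h_0\!\circ\!\nu\!=\!\nu$ on $\prt\Si\!\times\![0,\de)$. The last identity says exactly that $\Theta_1\!\circ\!h_0$ is the identity on the neighborhood $\nu(\prt\Si\!\times\![0,\de))$ of~$\prt\Si$; since each $\Theta_t$ is orientation-preserving and fixes $\prt\Si$ pointwise, $t\!\mapsto\!\Theta_t\!\circ\!h_0$ is a path in $\cD_c$ from $h_0$ to such a diffeomorphism. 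Concatenating with the path from the first stage yields the required path in $\cD_c$ starting at the original~$h_0$.

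I expect the only substantive point to be the connectedness of $\Diff^+_\fa(S^1)$ — this is precisely what makes the argument of \cite[Lemma~2.2]{Ge} carry over to crosscaps — while everything else (extending a boundary isotopy across a collar, uniqueness of collars) is routine differential topology; the sole care needed is the bookkeeping to keep every intermediate diffeomorphism inside $\cD_c$, which the two-stage organization handles by trivializing the boundary restriction before any other move.
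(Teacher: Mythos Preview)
Your proof is correct and the overall shape --- normalize the boundary restriction, then flatten near the collar --- matches the paper's, but the execution differs in two places worth noting. For the crosscap boundary components the paper does not argue connectedness of $\Diff^+_\fa(S^1)$ directly; instead it first reduces $h_0$ to a product form $\tilde h_0(z,s)=(h_0(z,0),s)$ on a collar (using connectedness of the cylinder diffeomorphism group, citing \cite{FM,Mas}), then observes that the boundary map descends to the quotient $(S^1/c_i)\times[0,\eps]$, applies connectedness of the cylinder group \emph{on the quotient}, and lifts the resulting isotopy back via a cut-off vector field. Your convexity argument for $\Diff^+_\fa(S^1)$ replaces this quotient trick entirely and is more self-contained. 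For the second stage, the paper's construction is a hands-on vector-field cutoff, whereas you invoke uniqueness of collars rel boundary as a black box; both are standard, yours is quicker to write. The paper's organization has the mild advantage of treating one boundary component at a time and being explicit enough that the later variant for $\cD_c^*$ (proof of Corollary~\ref{ls_cor}) is a one-line modification; your two-stage split is cleaner conceptually but would need the same minor adaptation there.
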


\begin{proof} 
Fix a component $(\prt\Si)_i\!\approx\!S^1$ of $\prt\Si$, an
identification of a neighborhood of $(\prt\Si)_i$ in~$\Si$ with 
$S^1\!\times\![0,2\de]$, and $\eps\!\in\!(0,\de/2)$ such that
$h_0(S^1\!\times\![0,2\eps])\subset S^1\!\times\![0,\de]$. 
After composing~$h_0$ with a path of diffeomorphisms on $\Si$ which restrict to 
the identity outside $S^1\!\times\!(0,2\de)$, we can assume that 
$h_0(S^1\!\times\![0,2\eps])=S^1\!\times\![0,2\eps]$.\\

\noindent 
By \cite[Proposition 2.4]{FM} and \cite[(1.1)]{Mas}, the group of diffeomorphisms 
of the cylinder preserving the orientation and each boundary component is path-connected. 
In particular, there exists a path of diffeomorphisms
\begin{gather}\label{ftpath_e}
f_t\!: S^1\!\times\![0,2\eps]\lra  S^1\!\times\![0,2\eps]
\qquad\hbox{s.t.}\quad f_0=h_0,~f_1=\ti{h}_0,\\
\notag
\hbox{where}\qquad
\ti{h}_0(z,s)=(\pi_1(h_0(z,0)),s)~\forall\,(z,s)\!\in\!S^1\!\times\![0,2\eps].
\end{gather}
Replacing $f_t$ with $\ti{h}_0\!\circ\!\ti{f}_t^{-1}\!\circ\!f_t$,
with $\ti{f}_t$ defined analogously to~$\ti{h}_0$,
we obtain a path of diffeomorphisms~$f_t$ as in~\eref{ftpath_e} that restrict to~$h_0$ 
on $S^1\!\times\!0$.
Thus, after composing~$h_0$ with a path of diffeomorphisms on $\Si$ that restrict 
to the identity outside $S^1\!\times\!(0,2\eps)$, we can assume that
$h_0\!=\!\ti{h}_0$ on $S^1\!\times\![0,\eps]$; 
such a path is constructed from a path of diffeomorphisms on $S^1\!\times\![0,2\eps]$
using vector fields as below.\\ 

\noindent
Since $z\!\lra\!h_0(z,0)$ commutes with the involution~$c_i$, 
$h_0$ descends to a diffeomorphism~$h_0'$
on the quotient $(S^1/c_i)\!\times\![0,\eps]$.
By \cite[Proposition 2.4]{FM} and \cite[(1.1)]{Mas},
there is a path of diffeomorphisms
$$f_t'\!: (S^1/c_i)\!\times\![0,\eps]\lra (S^1/c_i)\!\times\![0,\eps]
\qquad\hbox{s.t.}\quad f_0'=\id, ~ f_1'=h_0'^{-1}.$$
This path lifts to a path of diffeomorphisms
$$f_t\!: S^1\!\times\![0,\eps]\lra S^1\!\times\![0,\eps]
\qquad\hbox{s.t.}\quad f_0=\id, ~f_1=h_0^{-1}|_{S^1\times[0,\eps]},~
c_i\!\circ\!f_t|_{S^1\times0}=f_t\!\circ\!c_i.$$
The path $f_t$ generates a time-dependent vector field $X_t$. 
Multiplying~$X_t$ by a bump function on $\Si$ vanishing outside $[0,\eps]$ 
and restricting to $1$ on $S^1\!\times\![0,\epsilon/2]$, 
we obtain a time-dependent vector field $\wt{X}_t$ on~$\Si$. 
This new vector field gives rise to diffeomorphisms $\ti{f}_t$ of $\Si$ 
which restrict to the identity outside $S^1\!\times\![0,\eps]$, 
while $\ti{f}_1$ restricts to $h_0^{-1}$ on $S^1\!\times\![0,\eps/2]$. 
Then $h_0\!\circ\!\ti{f}_t$ is a path in~$\cD_c$ connecting $h_0$ with a diffeomorphism 
which restricts to the identity in a neighborhood of~$(\prt\Si)_i$.
\end{proof}

\begin{lem}\label{def_lm}
Let $(\Si,c)$ be an oriented sh-surface and $\psi\!\in\!\cD_c$.
Every family of real Cauchy-Riemann operators on a real bundle pair $(V,\ti{c})$
over $M_{\psi}$ with $D_t$ compatible with some $\fJ_t\!\in\!\cJ_c$ for each $t\!\in\!\bI$
can be smoothly deformed through such families to a  family of real Cauchy-Riemann
operators on a bundle pair $(V',\ti{c}')$ over $M_{\psi'}$ for some $\psi'\in\cD_c$
such that $\psi'$ restricts to the identity on a neighborhood of $\prt\Si$.
\end{lem}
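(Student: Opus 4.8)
The plan is to reduce to Lemma~\ref{iso_lm}. Applying it to $\psi\!\in\!\cD_c$, I obtain a smooth path $r\!\mapsto\!\psi_r$ in~$\cD_c$ ($r\!\in\!\bI$) with $\psi_0\!=\!\psi$ and with $\psi_1\!=:\!\psi'$ restricting to the identity on a neighborhood of~$\prt\Si$. It then remains to convert this path of diffeomorphisms of~$\Si$ into a path of diffeomorphisms of the corresponding mapping tori along which the given family of real Cauchy-Riemann operators can be transported.

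For each $r\!\in\!\bI$, I would construct a fiber-preserving diffeomorphism $\Phi_r\!:M_\psi\!\lra\!M_{\psi_r}$ covering $\id_{S^1}$ by lifting it to $\bI\!\times\!\Si$ via $(t,x)\!\mapsto\!(t,\psi_r^{-1}\!\circ\!\psi_{r(1-t)}(x))$. Since $\psi_r^{-1}\!\circ\!\psi_{r(1-t)}$ equals $\id_\Si$ at $t\!=\!0$ and $\psi_r^{-1}\!\circ\!\psi$ at $t\!=\!1$, this lift is compatible with the gluings $(1,x)\!\sim\!(0,\psi(x))$ on $M_\psi$ and $(1,y)\!\sim\!(0,\psi_r(y))$ on $M_{\psi_r}$, and so descends to~$\Phi_r$. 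One checks directly that $\Phi_r$ is fiber-preserving over~$\id_{S^1}$, that each of its slice maps lies in~$\cD_c$ (being a composition of elements of the group~$\cD_c$), that $\Phi_0\!=\!\id_{M_\psi}$, and that $\Phi_1$ carries $M_\psi$ onto~$M_{\psi'}$. In particular, because its slice maps lie in~$\cD_c$, $\Phi_r$ intertwines the fiberwise involution~$c$ on~$\prt M_\psi$ with the one on~$\prt M_{\psi_r}$.

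I would then push the data forward along~$\Phi_r$, setting $(V^{(r)},\ti{c}^{(r)})\!=\!(\Phi_r)_*(V,\ti{c})$ and $D^{(r)}\!=\!(\Phi_r)_*D$. Because $\Phi_r$ respects the boundary involutions, $(V^{(r)},\ti{c}^{(r)})$ is a real bundle pair over $(M_{\psi_r},c)$, and because conjugating a real Cauchy-Riemann operator by a bundle isomorphism covering a diffeomorphism again yields such an operator, $D^{(r)}$ is a family of real Cauchy-Riemann operators on it, with $D^{(r)}_t$ compatible with the pushed-forward complex structure on the fiber over~$t$. The key point is that, by Lemma~\ref{cDcJ_lem} applied to the slice diffeomorphisms in~$\cD_c$ (together with the fact that $\cD_c$ is a group), these pushed-forward complex structures again lie in~$\cJ_c$; hence each family $D^{(r)}$ is again of the required type. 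The assignment $r\!\mapsto\!D^{(r)}$ is then the desired smooth deformation through such families: it starts at the original family over~$M_\psi$, since $\Phi_0\!=\!\id$, and ends at a family over~$M_{\psi'}$ with $\psi'$ the identity near~$\prt\Si$. The only delicate part of the argument is the bookkeeping for~$\Phi_r$ — verifying that the lift descends, is fiber-preserving over~$\id_{S^1}$, and respects both the boundary involution and, via Lemma~\ref{cDcJ_lem}, the class~$\cJ_c$ — and this is routine once the path from Lemma~\ref{iso_lm} is at hand. For the use made of this lemma in the proof of Theorem~\ref{main_thm}, I would add that, since $\Phi_1$ covers~$\id_{S^1}$, it identifies $\det D\!\lra\!S^1$ with $\det D'\!\lra\!S^1$, so that $\lr{w_1(\det D),S^1}$ is unchanged.
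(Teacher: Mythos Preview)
Your argument is correct and follows essentially the same approach as the paper's proof: both start from the path in~$\cD_c$ provided by Lemma~\ref{iso_lm} and transport the given family along it by pulling back (equivalently, pushing forward) the data by diffeomorphisms built from this path, invoking Lemma~\ref{cDcJ_lem} to keep the fiberwise complex structures in~$\cJ_c$. The paper phrases this as pulling back the unrolled data $(\fJ_t,V_t,\ti{c}_t,D_t)$ by $f_{st}\!=\!\psi^{-1}\!\circ\!h_{st}$, while you package the same idea as an explicit fiber-preserving diffeomorphism $\Phi_r\!:M_\psi\!\lra\!M_{\psi_r}$; the two parametrizations differ only cosmetically.
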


\begin{proof}
By Lemma \ref{iso_lm}, there exists a path $h_s$ in $\cD_c$ such that $h_0\!=\!\psi$
and $h_1$ restricts to the identity on a neighborhood of $\prt\Si$ in $\Si$. 
Set $f_s\!=\!\psi^{-1}\!\circ\!h_s$. 
Let $(\fJ_t,V_t,\ti{c}_t,D_t)$, with $t\!\in\!\bI$, be any family of tuples such that 
$\fJ_t\!\in\!\cJ_c$, 
$D_t$ is a real Cauchy-Riemann operator on $(V_t,\ti{c}_t)$ over $(\Si,c)$
compatible with~$\fJ_t$, and
$$(\fJ_1,V_1,\ti{c}_1,D_1)=\psi^*(\fJ_0,V_0,\ti{c}_0,D_0).$$
For each $s\!\in\!\bI$, let
$$(\fJ_{s;t},V_{s;t},\ti{c}_{s;t},D_{s;t})
=f_{st}^*(\fJ_t,V_t,\ti{c}_t,D_t). $$
Since $(\fJ_{s;1},V_{s;1},\ti{c}_{s;1},D_{s;1})
=h_s^*(\fJ_{s;0},V_{s;0},\ti{c}_{s;0},D_{s;0})$, 
this defines  families of real Cauchy-Riemann operators on the real bundle pairs 
$(V_s,\ti{c}_s)$ over~$M_{h_s}$. 
Since $h_0\!=\!\psi$, we have thus constructed the desired deformation of the original
family.
\end{proof}

\begin{lem}\label{lift_cor} 
Let $(X,\phi)$ be a smooth manifold with an involution,
$(\Si,c)$ be an oriented sh-surface,
and $\b$ be a tuple as in~\eref{btuple_eq}.
Every loop $\gm$ in $\cH_g(X,\b)^{\phi,c}$ lifts to a path $\wt\gm$ 
in $\fB_g(X,\b)^{\phi,c}\!\times\!\cJ_c$ such that 
$\wt\gm_1=\psi\cdot\wt\gm_0$ for some $\psi\!\in\!\cD_c$ with
$\psi|_{\prt\Si}=\id$.
\end{lem}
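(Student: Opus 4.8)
The plan is to first produce an arbitrary lift of $\gm$ and then correct its monodromy diffeomorphism near the boundary using Lemma~\ref{iso_lm}.

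First I would invoke a standard path-lifting argument. By the freeness assumption in Remark~\ref{mfld_rem}, the quotient projection
$$q\!:\fB_g(X,\b)^{\phi,c}\!\times\!\cJ_c\lra\cH_g(X,\b)^{\phi,c}$$
is a principal $\cD_c$-bundle, and in particular a fibration with the path-lifting property; one uses here that $\cD_c$ genuinely acts on $\fB_g(X,\b)^{\phi,c}\!\times\!\cJ_c$, since reparametrizing a map $u$ by $h\!\in\!\cD_c$ preserves the condition $u\!\circ\!c\!=\!\phi\!\circ\!u$ on $\prt\Si$ (as $h$ commutes with $c$ there) and preserves all the (co)homological constraints in~\eref{btuple_eq} (as $h$ preserves the orientation and each boundary component), while $\cD_c$ preserves $\cJ_c$ by Lemma~\ref{cDcJ_lem}. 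Viewing $\gm$ as a map $\bI\!\lra\!\cH_g(X,\b)^{\phi,c}$ with $\gm_0\!=\!\gm_1$ and choosing any point over $\gm_0$, I obtain a path $\wt\gm$ in $\fB_g(X,\b)^{\phi,c}\!\times\!\cJ_c$ with $q\!\circ\!\wt\gm\!=\!\gm$. Since $q(\wt\gm_1)\!=\!\gm_0\!=\!q(\wt\gm_0)$ and the $\cD_c$-action is free, there is a unique $\psi\!\in\!\cD_c$ with $\wt\gm_1\!=\!\psi\!\cdot\!\wt\gm_0$, but this $\psi$ need not be trivial along $\prt\Si$.

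To fix this I would apply Lemma~\ref{iso_lm} with $h_0\!=\!\psi$, obtaining a path $h_t$ in $\cD_c$ from $\psi$ to a diffeomorphism $h_1$ that restricts to the identity on a neighborhood of $\prt\Si$. Consider the path $\sigma\!:\bI\!\lra\!\fB_g(X,\b)^{\phi,c}\!\times\!\cJ_c$ given by $\sigma(t)\!=\!h_t\!\cdot\!\wt\gm_0$; it starts at $h_0\!\cdot\!\wt\gm_0\!=\!\psi\!\cdot\!\wt\gm_0\!=\!\wt\gm_1$, ends at $h_1\!\cdot\!\wt\gm_0$, and lies entirely in the fiber $q^{-1}(\gm_0)$, since the $\cD_c$-action preserves the fibers of $q$. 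The concatenation of $\wt\gm$ and $\sigma$, reparametrized over $\bI$, is then a path $\wt\gm'$ with $\wt\gm'_0\!=\!\wt\gm_0$, with $q\!\circ\!\wt\gm'\!=\!\gm$ (the appended segment projects to the constant loop at $\gm_0$ and so does not change the loop up to reparametrization), and with $\wt\gm'_1\!=\!h_1\!\cdot\!\wt\gm_0\!=\!h_1\!\cdot\!\wt\gm'_0$; taking $\psi'\!=\!h_1\!\in\!\cD_c$, which satisfies $\psi'|_{\prt\Si}\!=\!\id$, gives the desired lift.

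The only step carrying content beyond bookkeeping is the path-lifting property of $q$, which is where the freeness hypothesis of Remark~\ref{mfld_rem} enters. If one prefers to avoid quoting a bundle theorem, one can instead cover the compact image $\gm(\bI)$ by finitely many open subsets over which $q$ admits local sections and patch the resulting local lifts by the $\cD_c$-action on the overlaps; the correction step is then purely formal given Lemma~\ref{iso_lm}.
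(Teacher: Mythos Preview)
Your proposal is correct and follows essentially the same approach as the paper: lift via local slices (equivalently, the path-lifting property of the $\cD_c$-quotient under the freeness assumption of Remark~\ref{mfld_rem}), then correct the monodromy using Lemma~\ref{iso_lm}. The only cosmetic difference is that the paper applies the correction by replacing the lift $\wt\gm_t$ with $\wt\gm'_t=h_t\!\cdot\!\psi^{-1}\!\cdot\!\wt\gm_t$, which is again a lift of~$\gm$ (same parametrization) satisfying $\wt\gm'_1=h_1\!\cdot\!\wt\gm'_0$, whereas you concatenate with a fiber path; both accomplish the same thing.
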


\begin{proof}
Under the assumption of Remark \ref{mfld_rem}, the projection 
$$\fB_g(X,\b)^{\phi,c}\times\cJ_c\lra\cH_g(X,\b)^{\phi,c}$$
admits local slices. 
Thus, there exists a path $\wt\gm_t=(u_t,\fJ_t)$ in $\fB_g(X,\b)^{\phi,c}\!\times\!\cJ_c$
lifting~$\gm$.
Let $\psi\!\in\!\cD_c$ be such that $\wt\gm_1=\psi\!\cdot\!\wt\gm_0$.
By Lemma~\ref{iso_lm}, there exists a path $h_t$ in $\cD_c$ such that $h_0\!=\!\psi$
and $h_1$ restricts to the identity on the boundary.
The lift $\wt\gm'_t=h_t\!\cdot\!\psi^{-1}\!\cdot\!\wt\gm_t$ of $\gm$ 
then satisfies $\wt\gm'_1\!=\!h_1\!\cdot\!\wt\gm'_0$.
\end{proof}

\section{Local systems of orientations}
\label{sec:ls}

\noindent
This section  extends \cite[Section~4]{Ge} to arbitrary boundary involutions and
reformulates Corollary~\ref{orient_cor} in terms of local 
systems of orientations,
making it easier to compare systems of orientations induced from different bundles
as in Remark~\ref{fam_rem}.
For the sake of completeness, we begin by recalling the basics of local systems 
following~\cite{Ste}.
We continue by constructing a local system $\cZ_{(w_1,w_2)}^{\ti\phi,c}$ on the product of
$|c|_1$ copies of the equivariant free loop space of~$X$
and $|c|_0$ copies of the $\phi$-fixed locus~$X^{\phi}$ and its free loop 
space $\cL(X^{\phi})$.
We then show that its pull-back  is  isomorphic to the local system twisted 
by the first Stiefel-Whitney class of $\det D_{(V,\ti\phi)}$.

\begin{definition} A \textsf{system of local groups} $\cG$ 
on a path-connected topological space~$L$ consists~of
\begin{enumerate}[label=$\bullet$,leftmargin=*]
\item a group $G_x$ for every $x\!\in\!L$ and
\item a group isomorphism $\al_{xy}:G_x\!\lra\!G_y$  for every homotopy class $\al_{xy}$ 
of paths from $x$ to $y$
\end{enumerate}
such that the composition $\be_{yz}\!\circ\!\al_{xy}$ is the isomorphism corresponding
to the path $\al_{xy}\be_{yz}$. 
\end{definition}

\begin{lem}[{\cite[Theorem 1]{Ste}}]\label{uniq} 
Let $L$ be a path-connected topological space, $p_0\!\in\!L$, and $G$ be a group.
For every group homomorphism $\psi\!:\!\pi_1(L,p_0)\!\lra\!\aut(G)$,
there is a unique system $\cG_{\psi}\!=\!\{G_x\}$ of local groups on $L$ such that $G_{p_0}\!=\!G$ 
and the operations of $\pi_1(L,p_0)$ on $G_{p_0}$ are those determined by~$\psi$.
\end{lem}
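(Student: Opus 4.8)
The plan is to identify a system of local groups on $L$ with a functor on the fundamental groupoid $\Pi_1(L)$ and to obtain $\cG_\psi$ by transporting $G$ along chosen paths out of the basepoint; ``unique'' is understood to mean unique up to the (necessarily unique) isomorphism restricting to the identity on the fiber over $p_0$. For existence I would first use path-connectedness to choose, for each $x\!\in\!L$, a homotopy class $\lambda_x$ of paths from $p_0$ to $x$, with $\lambda_{p_0}$ the class of the constant path. I would then set $G_x\!=\!G$ for all $x$ and, for a homotopy class $\alpha_{xy}$ of paths from $x$ to $y$, define the transport isomorphism $\alpha_{xy}\colon G_x\!\lra\!G_y$ to be $\psi\big(\lr{\lambda_x\alpha_{xy}\lambda_y^{-1}}\big)$, where $\lambda_x\alpha_{xy}\lambda_y^{-1}\!\in\!\pi_1(L,p_0)$ is the loop obtained by concatenation. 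Checking the composition axiom then reduces to $\psi$ being a homomorphism, once one observes that the loop attached to a concatenation $\alpha_{xy}\beta_{yz}$ is the product of the loops attached to $\alpha_{xy}$ and $\beta_{yz}$; and since $\lambda_{p_0}$ is constant, a loop $\gamma$ at $p_0$ gives $\gamma_{p_0p_0}\!=\!\psi(\lr{\gamma})$, so $G_{p_0}\!=\!G$ and the induced operations of $\pi_1(L,p_0)$ on that fiber are literally those determined by $\psi$.

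For uniqueness, given two systems $\cG\!=\!\{G_x,\alpha_{xy}\}$ and $\cG'\!=\!\{G_x',\alpha_{xy}'\}$ each with fiber $G$ over $p_0$ and each inducing $\psi$ there, I would construct a comparison morphism by $\theta_x\!=\!(\lambda_x)_{\cG'}\!\circ\!(\lambda_x)_{\cG}^{-1}\colon G_x\!\to\!G_x'$, i.e.\ transport a class back to $p_0$ along $\cG$ and out again along $\cG'$, using the $\lambda_x$ fixed above. The crucial verification is that $\theta_x$ is independent of the choice of $\lambda_x$: replacing $\lambda_x$ by another class modifies $(\lambda_x)_{\cG}$ and $(\lambda_x)_{\cG'}$ by composition with $\psi$ of one and the same loop at $p_0$ --- this is exactly where the hypothesis that \emph{both} systems realize $\psi$ enters --- so the two modifications cancel in $\theta_x$. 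A similar reduction to an identity among loops at $p_0$ shows $\{\theta_x\}$ intertwines all transport maps, and $\theta_{p_0}\!=\!\id_G$; an automorphism of a connected system fixing one fiber pointwise is the identity, so this isomorphism is unique. Applying this to two runs of the existence construction also shows $\cG_\psi$ is canonically independent of the auxiliary choices $\lambda_x$.

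The hard part here is not conceptual but bookkeeping: I would need to pin down at the outset whether path classes compose left-to-right or right-to-left and whether $\aut(G)$ acts on the left or the right, so that the clause ``$\be_{yz}\!\circ\!\al_{xy}$ corresponds to $\al_{xy}\be_{yz}$'' in the definition is consistent with ``$\psi$ is a homomorphism'' (otherwise one is silently working with the opposite group). The one genuinely substantive step is the independence of the comparison maps $\theta_x$ from the auxiliary paths in the uniqueness argument; everything else is a routine manipulation of loops in $\pi_1(L,p_0)$ and of the functoriality of transport.
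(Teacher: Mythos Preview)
The paper does not give its own proof of this lemma; it is quoted as \cite[Theorem~1]{Ste} and used as a black box. Your proposal is the standard argument (transport along chosen paths, with uniqueness up to a unique isomorphism fixing the basepoint fiber) and is correct, so there is nothing to compare against here.
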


\noindent
Two local systems $\cG$ and $\cG'$ on $L$ are \textsf{isomorphic} if for every point $x\!\in\!L$
there is an isomorphism $h_x\!: G_x\!\approx\!G_x'$ such that $\al_{xy}h_x\!=\!h_y\al_{xy}$ 
for every path~$\al_{xy}$  between~$x$ and~$y$. 
Equivalently, two local systems are isomorphic if the groups $G$ and $G'$ are isomorphic 
and the induced actions of $\pi_1(L,x_0)$ are the same.
There are $\aut(G)$  of such isomorphisms, and one is fixed by a choice of an
isomorphism $G_{x_0}\!\approx\!G'_{x_0}$.\\

\noindent
A continuous map $f\!:(L_1,p_1)\!\lra\!(L_2,p_2)$ naturally \textsf{pulls back}
a local system $\cG$ on~$L_2$ to a local system $f^*\cG$ on~$L_1$.
If $L_1$ and $L_2$ are path-connected and $\cG$ is induced by a group homomorphism
$\psi\!: \pi_1(L_2,p_2)\!\lra\!\aut(G)$, then $f^*\cG$ is induced by the group homomorphism
$$\psi\!\circ\!f_{\#}\!: \pi_1(L_1,p_1)\lra \aut(G),$$ 
where  $f_{\#}: \pi_1(L_1,p_1)\!\lra\!\pi_1(L_2,p_2)$.
The \textsf{local system of orientations} for a vector bundle $V\!\lra\!L$,
denoted by~$\cZ_{w_1(V)}$, 
is the system induced  by the homomorphism
$$\psi:\pi_1(L,p_0)\lra\aut(\Z)=\Z_2,\qquad
\al\lra\lr{w_1(V),\al}.$$\\

\noindent
Let $(X,\phi)$ be a topological space with an involution
and  $(V,\ti\phi)\!\lra\!(X,\phi)$ be a real bundle pair.
Fix base points $p_i$, $\gm_j$, and~$\Ga_k$ for the connected components
$X^{\phi}_i$, $\cL(X^{\phi})_j$, and $\cL(\bB_{\phi}X)_k$ of
$X^{\phi}$, $\cL(X^{\phi})$, and $\cL(\bB_{\phi}X)$, respectively.
Let $\cZ_{w_1,w_2}^{\ti\phi}$ be the local system on $X^{\phi}\!\times\!\cL(X^{\phi})$
corresponding to the homomorphism
\begin{gather*}
\psi:\pi_1(X^{\phi}_i\!\times\!\cL(X^{\phi})_j,p_i\!\times\!\gm_j)
=\pi_1(X^{\phi}_i,p_i)\!\times\!\pi_1(\cL(X^{\phi})_j,\gm_j) \lra\aut(\Z)=\Z_2,\\
(\al,\be)\lra \big(\lr{w_1(V^{\ti\phi}),\gm_j}+1\big)\lr{w_1(V^{\ti\phi}),[\al]}
+\lr{w_2(V^{\ti\phi}),[\be]},
\end{gather*}
and $\cZ_{w_2^{\ti\phi}}$ be the local system on $\cL(\bB_{\phi}X)$ 
corresponding to the homomorphism
$$\psi:\pi_1(\cL(\bB_{\phi}X)_k,\Ga_k)\lra\aut(\Z),\qquad
\be\lra \lr{w_2^{\ti\phi}(V),[\be]}.$$
If $c$ is a boundary involution on an oriented surface~$\Si$, 
we define $\cZ^{\ti\phi,c}_{(w_1,w_2)}$ on 
$$X_{\phi,c}\equiv 
\big(X^{\phi}\!\times\!\cL(X^{\phi})\big)^{|c|_0}\times \cL(\bB_{\phi}X)^{|c|_1}$$
to be the pull-back of the local systems $\cZ_{w_1,w_2}^{\ti\phi}$ and
$\cZ_{w_2^{\ti\phi}}$ by the projection maps.
Thus, the restriction of this system to a component of $X_{\phi,c}$
with a basepoint 
\BE{basepoint_e}(\vec{p},\vec\gm,\vec\Ga)\equiv\big(p_1,\gm_1,\ldots,p_{|c|_0},\gm_{|c|_0},
\Ga_{|c|_0+1},\ldots,\Ga_{|c|_0+|c|_1}\big)\EE
is given by the homomorphism
\begin{gather}
\psi: \pi_1\big(X_{\phi,c},(\vec{p},\vec\gm,\vec\Ga)\big) \lra \aut(\Z)=\Z_2, \notag\\ 
\label{zf_eq}
\begin{split}
&\big(\al_1,\be_1,\ldots,\al_{|c|_0},\be_{|c|_0},\be_{|c|_0+1},\ldots,\be_{|c|_0+|c|_1}\big)\\
&\qquad\lra 
\sum_{i=1}^{|c|_0}\Big(\big(\lr{w_1(V^{\ti\phi}),\gm_i}\!+\!1\big)\lr{w_1(V^{\ti\phi}),[\al_i]}
+\lr{w_2(V^{\ti\phi}),[\be_i]}\Big)
+\sum_{i=|c|_0+1}^{|c|_0+|c|_1}\!\!\!\lr{w_2^{\ti\phi}(V),[\be_i]}.
\end{split}
\end{gather}\\

\noindent
If $(X,\phi)$ and $(\Si,c)$ are as above, $g$ is the genus of~$\Si$,
$\b$ is a tuple of homology classes as in~\eref{btuple_eq},
and $\k\!=\!(k_1,\ldots,k_{|c|_0+|c|_1})$ is a tuple of nonnegative integers, let
\begin{equation*}\begin{split}
\fB_{g,\k}(X,\b)^{\phi,c}&=\fB_g(X,\b)^{\phi,c}\times
\prod_{i=1}^{|c|_0+|c|_1}\!\!\!\!\!\big((\prt\Si)_i^{k_i}-\De_{i,k_i}\big),\\
\cH_{g,\k}(X,\b)^{\phi,c}&=
\big(\fB_{g,\k}(X,\b)^{\phi,c}\!\times\!\cJ_c\big)\big/\cD_c,
\end{split}\end{equation*}
where 
$$\De_{i,k_i}=\big\{(x_{i,1},\ldots,x_{i,k_i})\!\in\!(\prt\Si)_i^{k_i}\!:~
x_{i,j'}\!\in\!\{x_{i,j},c(x_{i,j})\}~\tn{for some}~j,j'\!=\!1\ldots,k_i,~j\!\neq\!j'\big\}$$
is the big $c$-symmetrized diagonal.
In the case~$X$ is a point and $\b$ is the zero tuple, we denote 
$\cH_{g,{\bf 0}}(X,\b)^{\phi,\id_{\prt\Si}}$ by $\cM_{\Si}$;
this is the usual Deligne-Mumford moduli space of stable bordered Riemann surfaces 
with ordered boundary components if $\Si$ is not a disk or a cylinder
(for stability reasons).
In all cases, let 
$$\ff:\cH_{g,\k}(X,\b)^{\phi,c}\lra\cH_g(X,\b)^{\phi,c}$$ 
be the map forgetting the marked points.

\begin{prop}\label{evmap_prop}
Let $(X,\phi)$, $(\Si,c)$, $g$, $\b$, $\k$, and $X_{\phi,c}$  be as above.
If $\k\!\in\!(\Z^+)^{|c|_0+|c|_1}$, there is a map 
$$\ev\!: \cH_{g,\k}(X,\b)^{\phi,c} \lra X_{\phi,c}$$
such that for every real bundle pair $(V,\ti\phi)\!\lra\!(X,\phi)$ 
the local system $\cZ_{w_1(\det D_{V,\ti\phi})}$ over $\cH_{g,\k}(X,\b)^{\phi,c}$
is isomorphic to $\ev^*\cZ^{\ti\phi,c}_{(w_1,w_2)}$.
If $\k\!=\!\1$, this local system pushes down to a local system 
$$\wt\ev^*\cZ^{\ti\phi,c}_{(w_1,w_2)}\equiv\ff_*\circ\ev^*\cZ^{\ti\phi,c}_{(w_1,w_2)}$$ 
over $\cH_g(X,\b)^{\phi,c}$ isomorphic to $\cZ_{w_1(\det D_{V,\ti\phi})}$.
\end{prop}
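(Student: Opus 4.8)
The plan is to build $\ev$ componentwise from the boundary data and then read off the monodromy of the pulled-back local system from Corollary~\ref{orient_cor}. Given $[(u,\fJ,\vec{x})]\in\cH_{g,\k}(X,\b)^{\phi,c}$ with $\k\!\in\!(\Z^+)^{|c|_0+|c|_1}$, use the first marked point $x_{i,1}$ on each boundary component $(\prt\Si)_i$ as a basepoint. For $i\!\le\!|c|_0$ the restriction $u|_{(\prt\Si)_i}$ takes values in $X^\phi$ (since $c_i\!=\!\id$ and $u\!\circ\!c\!=\!\phi\!\circ\!u$), and the pair $\big(x_{i,1},(\prt\Si)_i\big)$ together with the orientation of $\Si$ identifies $(\prt\Si)_i$ with $(S^1,1)$ up to an orientation-preserving diffeomorphism fixing $1$ (the space of such identifications is contractible); thus $u$ determines the point $u(x_{i,1})\!\in\!X^\phi$ and the based loop $u|_{(\prt\Si)_i}\!\in\!\cL(X^\phi)$, the latter canonical up to homotopy. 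For $i\!>\!|c|_0$ the map $u|_{(\prt\Si)_i}$ is $\Z_2$-equivariant from $\big((\prt\Si)_i,\fa\big)$ to $(X,\phi)$, so by the discussion in Section~\ref{equivappl_subs} it yields the loop $\bB_{\phi,\fa}(u|_{(\prt\Si)_i})$ in $\bB_\phi X$, i.e.\ an element of $\cL(\bB_\phi X)$, again canonical up to homotopy. Assembling the factors gives $\ev\!:\cH_{g,\k}(X,\b)^{\phi,c}\!\lra\!X_{\phi,c}$; it is $\cD_c$-invariant because $h\!\in\!\cD_c$ acts on marked points by $\vec{x}\!\mapsto\!h^{-1}(\vec{x})$, leaving $u(x_{i,1})$ and the homotopy class of the based boundary loop unchanged, and well-definedness up to homotopy is all that is needed to pull back a local system.

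\textbf{Matching monodromies.}
Let $\gm$ be a loop in $\cH_{g,\k}(X,\b)^{\phi,c}$. By Lemma~\ref{lift_cor}, whose proof applies verbatim once marked points are carried along, there is a lift $\wt\gm$ to $\fB_{g,\k}(X,\b)^{\phi,c}\!\times\!\cJ_c$ with $\wt\gm_1\!=\!\psi\!\cdot\!\wt\gm_0$ and $\psi|_{\prt\Si}\!=\!\id$. Along $\wt\gm$ the point $x_{i,1}$ traces the loop $\al_i$ of Corollary~\ref{orient_cor}, the based boundary loop traces a loop in $\cL(X^\phi)$ (for $|c_i|\!=\!0$) whose associated torus $T^2\!\lra\!X^\phi$ represents $[\be_i]$, and the equivariant boundary loop traces a loop in $\cL(\bB_\phi X)$ (for $|c_i|\!=\!1$) whose associated map $S^1\!\times\!\R\P^1\!\lra\!\bB_\phi X$, via the section of $q$ in Section~\ref{equivappl_subs}, is $\bB_{\phi,\id\times c_i}\be_i$ and hence represents $[\be_i]^{\id\times c_i}$. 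Plugging these into the defining homomorphism~\eref{zf_eq} and using Lemma~\ref{Latop_lmm} to replace $w_2^{\ti\phi}(V)$ by $w_2^{\La^{\top}_\C\ti\phi}(\La^{\top}_\C V)$ in the crosscap terms (noting $\lr{w_1(V^{\ti\phi}),\gm_i}\!=\!\lr{w_1(V^{\ti\phi}),b_i}$ since $\gm_i$ is homotopic to $u_0|_{(\prt\Si)_i}$), the monodromy $\lr{w_1(\ev^*\cZ^{\ti\phi,c}_{(w_1,w_2)}),\gm}$ becomes exactly the right-hand side of~\eref{orient_eq} for $\ff(\gm)$, which by Corollary~\ref{orient_cor} equals $\lr{w_1(\det D_{(V,\ti\phi)}),\gm}$; here $\det D_{(V,\ti\phi)}$ over $\cH_{g,\k}$ is the pull-back of the one over $\cH_g$, so the two pairings agree. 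As both $\cZ_{w_1(\det D_{V,\ti\phi})}$ and $\ev^*\cZ^{\ti\phi,c}_{(w_1,w_2)}$ are $\Z$-coefficient local systems, Lemma~\ref{uniq} yields the asserted isomorphism.

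\textbf{The case $\k\!=\!\1$.}
The fibres of the forgetful map $\ff$ are quotients of $\prod_i(\prt\Si)_i$, hence connected, so $\ff_\#$ is surjective on $\pi_1$ and it suffices to show that $\ev^*\cZ^{\ti\phi,c}_{(w_1,w_2)}$ has trivial monodromy around any loop $\gm$ contained in a single fibre, i.e.\ one along which only the points $x_{i,1}$ move, once around $(\prt\Si)_i$. For such $\gm$ one has $[\al_i]\!=\!b_i$, while the torus traced by the based boundary loop is $(s,z)\!\mapsto\!u(sz)$, which factors through $(\prt\Si)_i\!\cong\!S^1$, so $[\be_i]\!=\!0$; the corresponding equivariant torus for a crosscap factors through $(S^1,\fa)$, and $H_2^{\fa}(S^1)\!=\!H_2(\R\P^1;\Z_2)\!=\!0$, so $[\be_i]^{\id\times c_i}\!=\!0$. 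Since $(a\!+\!1)a\!\equiv\!0\pmod 2$, every summand in~\eref{zf_eq} vanishes, so the monodromy around $\gm$ is trivial and $\ev^*\cZ^{\ti\phi,c}_{(w_1,w_2)}$ descends to a local system $\wt\ev^*\cZ^{\ti\phi,c}_{(w_1,w_2)}\!\equiv\!\ff_*\ev^*\cZ^{\ti\phi,c}_{(w_1,w_2)}$ on $\cH_g(X,\b)^{\phi,c}$. Finally $\ff^*\cZ_{w_1(\det D_{V,\ti\phi})}\!=\!\cZ_{w_1(\ff^*\det D_{V,\ti\phi})}\!\cong\!\ev^*\cZ^{\ti\phi,c}_{(w_1,w_2)}\!=\!\ff^*\wt\ev^*\cZ^{\ti\phi,c}_{(w_1,w_2)}$, and since $\ff_\#$ is surjective this forces $\cZ_{w_1(\det D_{V,\ti\phi})}\!\cong\!\wt\ev^*\cZ^{\ti\phi,c}_{(w_1,w_2)}$ on $\cH_g(X,\b)^{\phi,c}$.

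\textbf{Main obstacle.}
I expect the crux to be the second paragraph: reconciling $\ev_\#[\gm]$ with the classes $[\al_i]$, $[\be_i]$, $[\be_i]^{\id\times c_i}$ of Corollary~\ref{orient_cor}, in particular translating the crosscap factor $\cL(\bB_\phi X)$ and the homomorphism~\eref{zf_eq} into the equivariant $H_2$-pairing through the Borel-construction description of Section~\ref{equivappl_subs}, together with the vanishing computations in the last paragraph. The homotopy-theoretic well-definedness of $\ev$ in the first paragraph is a minor but genuine point that also needs to be handled carefully.
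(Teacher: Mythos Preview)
Your proposal is correct and follows essentially the same approach as the paper: the construction of $\ev$ via boundary evaluation and boundary-loop data (with well-definedness up to homotopy), the monodromy matching via Corollary~\ref{orient_cor} and Lemma~\ref{Latop_lmm}, and the descent argument for $\k\!=\!\1$ by showing the torus traced along a fiber loop factors through a circle. The only presentational difference is that the paper packages the ``contractible space of identifications'' step as a section of the fibration $(\fB_{g,\k}\!\times\!\cJ_c)/\cD_0\!\lra\!(\fB_{g,\k}\!\times\!\cJ_c)/\cD_1$ (where $\cD_0$ fixes $\prt\Si$ pointwise and $\cD_1$ fixes the first marked points), and for the $|c_i|\!=\!0$ case of the last claim simply cites \cite[Lemma~4.7]{Ge} rather than spelling out the $(a\!+\!1)a\!\equiv\!0$ computation you give.
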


\begin{proof} 
The proof is similar to the proofs of \cite[Proposition~4.6]{Ge}
and \cite[Lemma~4.7]{Ge}; so we omit some of the details and refer the reader
to~\cite{Ge}.\\

\noindent
The component $\ev_i^{X^{\phi}}$ of~$\ev$ to the $i$-th $X^{\phi}$ factor is given by 
the evaluation at the first marked point~$x_{i,1}$ on the $i$-th boundary component. 
We now describe the remaining components of~$\ev$.
Let $\cD_0$ and~$\cD_1$ denote the subgroup of~$\cD_c$ restricting 
to the identity on~$\prt\Si$ and 
the subgroup of~$\cD_c$ fixing the first marked point~$x_{i,1}$
on each boundary component~$(\prt\Si)_i$, respectively.
The fibration 
$$\big(\fB_{g,\k}(X,\b)^{\phi,c}\!\times\!\cJ_c\big)\big/\cD_0
\lra
\big(\fB_{g,\k}(X,\b)^{\phi,c}\!\times\!\cJ_c\big)\big/\cD_1$$
has contractible fibers and thus admits a section~$s$. 
Since the elements of $\cD_0$ fix $\prt\Si$ pointwise, there are well-defined maps
$$e_i\!: \big(\fB_{g,\k}(X,\b)^{\phi,c}\!\times\!\cJ_c\big)\big/\cD_0
\lra \cL(X^{\phi}), \quad
[u,\x_1,\ldots,\x_{|c|_0+|c|_1}]\lra u|_{(\prt\Si)_i}\,,$$
for $i\!=\!1,\ldots,|c|_0$.
If $i\!=\!|c|_0\!+\!1,\ldots,|c|_0\!+\!|c|_1$, 
$\phi\!\circ\!u|_{(\prt\Si)_i}=u\!\circ\!c|_{(\prt\Si)_i}$. 
Thus, $u|_{(\prt\Si)_i}$ determines an element of~$\cL(\bB_{\phi}X)$,
$$\bB_{\phi,c_i}(u|_{(\prt\Si)_i})\!: S^1\approx (\prt\Si)_i/\Z_2 \lra \bB_{c_i}(\prt\Si)_i
\stackrel{\id\times_{\Z_2}u}\lra\bB_{\phi}X\,,$$
where the middle map is a section of the fiber bundle~\eref{Z2fib_e2}; 
see also the beginning of Section~\ref{equivappl_subs}.
So, there are well-defined maps
$$e_i\!: \big(\fB_{g,\k}(X,\b)^{\phi,c}\!\times\!\cJ_c\big)\big/\cD_0
\lra \cL(\bB_{\phi}X)$$
for $i\!=\!|c|_0\!+\!1,\ldots,|c|_0\!+\!|c|_1$.
Let $\1\!=\!(1,\ldots,1)\in\!\Z^{|c|_0+|c|_1}$.
The component~$\ev_i^{\cL}$ of~$\ev$ to the $i$-th factor 
$\cL(X^{\phi})$ or $\cL(\bB_{\phi}X)$ is the composition 
$$\cH_{g,\k}(X,\b)^{\phi,c} \lhra
\big(\fB_{g,\k-\1}(X,\b)^{\phi,c}\!\times\!\cJ_c\big)\big/\cD_1
\stackrel{e_i\circ s}{\lra}
\begin{cases}
\cL(X^{\phi}),&\hbox{if}~i=1,\ldots,|c|_0;\\
\cL(\bB_{\phi}X),&\hbox{if}~i=|c|_0\!+\!1,\ldots,|c|_0\!+\!|c|_1.
\end{cases}$$
It is well-defined up to homotopy.\\

\noindent
We now show that the local systems $\cZ_{w_1(\det D_{V,\ti\phi})}$ and
$\ev^*\cZ^{\ti\phi,c}_{(w_1,w_2)}$ over $\cH_{g,\k}(X,\b)^{\phi,c}$ are isomorphic.
Let $u_0\!\in\!\cH_{g,\k}(X,\b)^{\phi,c}$ be a preimage under $\ev$ of a basepoint
$(\vec{p},\vec\gm,\vec\Ga)$ in~$X_{\phi,c}$ as in~\eref{basepoint_e}.
In particular,
$$b_i=[\gm_i]\in H_1(X^{\phi};\Z)~~\forall\,i=1,\ldots,|c|_0,\quad
b_i=[\Ga_i]\in H_1^{\phi}(X;\Z)~~\forall\,i=|c|_0\!+\!1,\ldots,|c|_0\!+\!|c|_1.$$
It is enough to show that the action of every element~$\gm$ of 
$\pi_1(\cH_{g,\k}(X,\b)^{\phi,c},u_0)$ on the two local systems is the same.
The action on $\cZ_{w_1(\det D_{V,\ti\phi})}$ is given~by
$\lr{w_1(\det D_{(V,\ti\phi)}),\gm}$.
By Corollary~\ref{orient_cor} and Lemma~\ref{Latop_lmm},
\BE{act_e}\begin{split}
\lr{w_1(\det D_{(V,\ti\phi)}),\gm}&= 
\sum_{i=1}^{|c|_0}\Big(\big(\blr{w_1(V^{\ti\phi}),b_i}+1\big)
\lr{w_1(V^{\ti\phi}),[\al_i]}+\blr{w_2(V^{\ti\phi}),[\be_i]}\Big)\\
&\quad+\sum_{i=|c|_0+1}^{|c|_0+|c|_1} \!\!
\blr{w_2^{\ti\phi}(V),[\be_i]^{\id\times c_i}},
\end{split}\EE
where $\al_i\!:S^1\!\lra\!X$ and $\be_i\!:S^1\!\times\!(\prt\Si)_i\!\lra\!X$ 
are the paths traced by~$x_{i,1}$ and by the entire boundary component~$(\prt\Si)_i$ 
along $\wt\gm\!=\!s\!\circ\!\gm$. 
In particular,
\begin{gather*}
\al_i=\ev_i^{X^{\phi}}\!\circ\!\gm,\quad \be_i=\ev_i^{\cL}\!\circ\!\gm 
\qquad \forall~i=1,\ldots,|c|_0,\\
\bB_{\phi,\id_{S^1}\times c_i}\be_i=\ev_i^{\cL}\!\circ\!\gm \qquad\forall~i=|c|_0\!+\!1,\ldots,|c|_0\!+\!|c|_1.
\end{gather*}
The action of $\gm$ on $\ev^*\cZ^{\ti\phi,c}_{(w_1,w_2)}$ is given~by the action~of
$$\ev\!\circ\!\gm
=\big(\al_1,\be_1,\ldots,\al_{|c|_0},\be_{|c|_0},
\bB_{\phi,\id_{S^1}\times c_i}\be_{|c|_0+1},\ldots,\bB_{\phi,\id_{S^1}\times c_i}\be_{|c|_0+|c|_1}\big)$$
on $\cZ^{\ti\phi,c}_{(w_1,w_2)}$.
By~\eref{zf_eq}, this action is also given by the right-hand side of~\eref{act_e}.\\

\noindent
For the last claim of the proposition, it is sufficient to show that 
the system $\ev^*\cZ^{\ti\phi,c}_{(w_1,w_2)}$ over $\cH_{g,\1}(X,\b)^{\phi,c}$
pushes down under the map forgetting the boundary points on 
a particular component $(\prt\Si)_i$ of~$\prt\Si$, 
i.e.~that the system is trivial along each fiber of this map.
For the components with $|c_i|\!=\!0$, this is done in the proof of 
\cite[Lemma~4.7]{Ge}; so, we assume that $|c_i|\!=\!1$.
Let $\gm$ be a loop in the fiber (which is homotopic to~$S^1$). 
The~map
$$\bB_{\phi,\id_{S^1}\times c_i}(\ev_i^{\cL}\!\circ\!\gm)\!: 
S^1\!\times\!((\prt\Si)_i/\Z_2)\lra\bB_{\phi}X$$
factors through 
$(\prt\Si)_i/\Z_2\!\approx\!S^1$.
Thus, the map $\ev_i^{\cL}\!\circ\!\gm$ represents the zero class in 
$H_2^{\phi}(X)$, and so the reasoning in the proof of 
\cite[Lemma~4.7]{Ge} still applies.
\end{proof}

\begin{prop}\label{orient_prop}  
Let $(X,\phi)$, $(\Si,c)$, $g$, $\b$, $\k$, and $X_{\phi,c}$  be as before
and $(V,\ti\phi)\!\lra\!(X,\phi)$ be a real bundle pair.
An isomorphism between the local systems of Proposition~\ref{evmap_prop}
is determined by trivializations~of 
\begin{enumerate}[label=(\arabic*),leftmargin=*]
\item $\La^{\top}_{\R}(V^{\ti\phi})$ over a basepoint of each component of $X^{\phi}$,
\item $V^{\ti\phi}\oplus 3\La^{\top}_{\R}V^{\ti\phi}$ over
representatives for free homotopy classes of loops in~$X^{\phi}$
(one representative for each homotopy class), and
\item $(V,\ti\phi)$ over representatives for free homotopy classes of maps $S^1\!\lra\!X$
intertwining $\phi$ and the antipodal involution~$\fa$ on~$S^1$.
\end{enumerate}
The effect on this isomorphism over $\cH_{g,\k}(X,\b)^{\phi,c}$ under the changes 
$$o_{\R}^{\ti\phi}\!:\pi_0(X^{\phi})\lra\{0,1\}, \quad
s_{\R}^{\ti\phi}\!: \pi_1(X)\lra\pi_1(\SO(\rk_{\C}V)), \quad
o_{\C}^{\ti\phi}\!:\pi_1^{\phi}(X)\lra\{0,1\}$$  
in these trivializations is the multiplication by $(-1)^\eps$, where
\BE{orientprop_e}\begin{split}
\eps&=\sum_{|c_i|=0}\!\!
\big((\lr{w_1(V^{\ti\phi}),b_i}\!+\!1)\,o^{\ti\phi}_{\R}(\lr{b_i}_0)
+s^{\ti\phi}_{\R}(b_i)\big)
+\sum_{|c_i|=1}\!\!o^{\ti\phi}_{\C}(b_i)
\end{split}\EE
and $\lr{b_i}_0\in\pi_0(X^{\phi})$ is the component determined by 
$b_i\!\in\!H_1(X^{\phi};\Z)$.\footnote{$H_1(X^{\phi};\Z)$ is the direct sum
over $\pi_0(X^{\phi})$; $\cH_{g,\k}(X,\b)^{\phi,c}=\eset$ unless 
each $b_i$ lies in a summand of this decomposition.}
\end{prop}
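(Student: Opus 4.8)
The plan is to pin down the abstract isomorphism of Proposition~\ref{evmap_prop} by choosing a preferred orientation of the determinant line over a basepoint of each component of $\cH_{g,\k}(X,\b)^{\phi,c}$, and then to track how that orientation changes under the three modifications; the part of the argument concerning the boundary components with $|c_i|=0$ is that of \cite[Section~4]{Ge}, and the part concerning $|c_i|=1$ is obtained by grafting on the crosscap degeneration from the proof of Theorem~\ref{main_thm}.

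\textbf{Reduction to one fiber.} Fix a basepoint $u_0=[u,\x_1,\ldots,\x_{|c|_0+|c|_1}]$ in each connected component of $\cH_{g,\k}(X,\b)^{\phi,c}$. Since both local systems have fiber $\Z$ and $\aut(\Z)=\Z_2$, an isomorphism between them restricts on each component to one of the two isomorphisms of the fibers over $u_0$ (cf.\ the discussion following Lemma~\ref{uniq}), and the correct one is singled out by a compatible pair of orientations of $\det D_{(V,\ti\phi)}|_{u_0}$ and of $\cZ^{\ti\phi,c}_{(w_1,w_2)}|_{\ev(u_0)}$; the orientation of the latter is read off from the same trivialization data through the explicit model of $\cZ^{\ti\phi,c}_{(w_1,w_2)}$ on $X_{\phi,c}$ from Section~\ref{sec:ls}. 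Thus it suffices to produce, from the data~(1)--(3), an orientation of $\det D_{(V,\ti\phi)}|_{u_0}$, and then to compute how it changes under $o^{\ti\phi}_{\R}$, $s^{\ti\phi}_{\R}$, $o^{\ti\phi}_{\C}$.

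\textbf{Orienting the determinant line.} As in the proof of Theorem~\ref{main_thm}, pinch off from each boundary component $(\prt\Si)_i$ with $|c_i|=1$ a disk $D^2_i$ with crosscap carrying a real bundle pair $(V_i,\ti{c}_i)$ with $\mu(V_i,\ti{c}_i)=0$, leaving a bordered surface $\Si'$ with trivial boundary involutions together with a real Cauchy-Riemann operator $D'$ on $(V',\ti{c}')$; by Remark~\ref{con_rmk} and \eref{orientsplit_e},
$$\det D_{(V,\ti\phi)}|_{u_0}\ \approx\ (\det D')\otimes\!\!\bigotimes_{|c_i|=1}\!\!\big((\det D_i)\otimes\La^{\top}_{\R}(V'|_{p_i})\big).$$
Orient the three kinds of factors: $\La^{\top}_{\R}(V'|_{p_i})$ by the complex structure on $V'|_{p_i}$; $\det D'$ by the construction of \cite[Section~4]{Ge}, which uses the trivializations of $\La^{\top}_{\R}V^{\ti\phi}$ over basepoints of the components of $X^{\phi}$ (data~(1)) and of $V^{\ti\phi}\oplus 3\La^{\top}_{\R}V^{\ti\phi}$ over loop representatives in $X^{\phi}$ (data~(2)), which in the trivial-involution case is that of \cite[Proposition~8.1.4]{FOOO}; and $\det D_i$ as follows: by $\mu(V_i,\ti{c}_i)=0$ and Lemma~\ref{Maslovind_lmm} the pair $(V_i,\ti{c}_i)$ is trivial over $(D^2_i,c_i)$, on the trivial pair the standard $\dbar$-operator is onto with kernel the constant $\R^n$-valued functions (Proposition~\ref{etaFred_prp}), and a trivialization of $(V,\ti\phi)$ over $u|_{(\prt\Si)_i}\!:(S^1,\fa)\to(X,\phi)$ (data~(3)) extends over $D^2_i$ by Lemma~\ref{Maslovind_lmm}, the extension being unique up to homotopy, and hence orients this copy of $\R^n$ and with it $\det D_i$. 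The tensor product of these orientations is the required orientation of $\det D_{(V,\ti\phi)}|_{u_0}$, which determines the isomorphism of local systems claimed in the first sentence of the proposition.

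\textbf{The change under the three modifications, and the main obstacle.} Changing trivialization~(1) over the component $\lr{b_i}_0$ by $o^{\ti\phi}_{\R}(\lr{b_i}_0)$ reverses the reference orientation of the constant sections along $(\prt\Si)_i$ and forces a compensating change in the extension of trivialization~(2) over that boundary component; as in \cite[Lemma~4.7]{Ge} the net effect on $\det D'$ is multiplication by $(-1)^{(\lr{w_1(V^{\ti\phi}),b_i}+1)\,o^{\ti\phi}_{\R}(\lr{b_i}_0)}$. Changing trivialization~(2) over a loop representative by $s^{\ti\phi}_{\R}(b_i)\in\pi_1(\SO(\rk_{\C}V))=\Z_2$ multiplies the orientation of $\det D'$ by $(-1)^{s^{\ti\phi}_{\R}(b_i)}$, again by \cite[Section~4]{Ge}; both contribute only for $|c_i|=0$. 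For $|c_i|=1$, changing trivialization~(3) over $u|_{(\prt\Si)_i}$ by $o^{\ti\phi}_{\C}(b_i)$ moves its extension over $D^2_i$ into the other of the two homotopy classes of trivializations of a Maslov-index-zero pair over $(D^2_i,c_i)$ (Lemma~\ref{Maslovind_lmm}, cf.\ \cite[Lemma~2.2]{Teh} and \cite[Section~2.1]{Teh}), which reverses the orientation of $\det D_i$ precisely when $o^{\ti\phi}_{\C}(b_i)\neq0$. Multiplying these contributions over all boundary components yields the exponent $\eps$ of~\eref{orientprop_e}. The genuinely new input beyond \cite{Ge}, and where the bulk of the work sits, is the $|c_i|=1$ analysis: verifying that a trivialization of $(V,\ti\phi)$ over $(S^1,\fa)$ — rather than over the disk or via a real square root, as in \cite[Section~2.1]{Teh} — gives a well-defined orientation of $\det D_i$, independent of the chosen extension over $D^2_i$, and that its dependence on the homotopy class of that trivialization is exactly through $o^{\ti\phi}_{\C}$; this is where Lemma~\ref{eta_lem}, Lemma~\ref{Maslovind_lmm}, and Proposition~\ref{etaFred_prp} enter.
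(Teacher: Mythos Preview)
Your proposal is correct and follows essentially the same approach as the paper: both reduce to orienting $\det D_{(V,\ti\phi)}$ at a basepoint via the splitting~\eref{orientsplit_e}, orient the complex factors canonically, handle the $|c_i|=0$ piece by citing \cite{Ge} (the paper cites \cite[Proposition~4.8]{Ge}, you cite Section~4 and Lemma~4.7), and orient the crosscap disks using the trivialization data~(3). Your treatment of the crosscap contribution is somewhat more explicit than the paper's, which simply says ``by the proof of Lemma~\ref{eta_lem}, an orientation of $D_{u_0;i}$ is determined by the homotopy class of a trivialization of $u_0^*(V,\ti\phi)$ over~$(\prt\Si)_i$ and changing the homotopy class changes the induced orientation''; the paper also invokes \cite[Theorem~1.7]{Ge} to note that the resulting isomorphism is independent of the choice of~$u_0$, which you do not address.
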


\begin{proof}
An isomorphism between the two local systems is determined
by a trivialization of $\det D_{(V,\ti\phi)}$ over a basepoint~$u_0$
for each component of $\cH_{g,\k}(X,\b)^{\phi,c}$ that lies in the preimage
of a basepoint $(\vec{p},\vec\gm,\vec\Ga)$ of~$X_{\phi,c}$.
This fixes the group $\Z$ at $u_0$ and thus an isomorphism between the two systems.
By the proof of \cite[Theorem~1.7]{Ge}, this isomorphism is independent of 
the choice of~$u_0$.\\

\noindent
Fix $u_0\!\in\!\cH_{g,\k}(X,\b)^{\phi,c}$ as above.
By \eref{orientsplit_e},
$$\det D_{(V,\ti\phi)}|_{u_0}\approx 
(\det D_{u_0}')\otimes
\bigotimes_{|c_i|=1}\!\!\big((\det D_{u_0;i})\!\otimes\!\La_{\R}^{\top}(V_{p_i}')\big),$$
where $D_{u_0}'$ is a real Cauchy-Riemann operator on a bundle pair
$(V',\ti{c}')\!\lra\!(\Si',c)$ with
$$(\prt\Si',c)=\bigsqcup_{|c_i|=0}\!\!((\prt\Si)_i,c_i), \quad
\big(V',\ti{c}'\big)\big|_{(\prt\Si)_i}=u_0^*(V,\ti\phi)|_{(\prt\Si)_i},$$
$D_{u_0;i}$ is a real Cauchy-Riemann operator on a bundle pair
$(V_i,\ti{c}_i)\!\lra\!(D^2,c_i)$ with 
$$\big(V_i|_{S^1},\ti{c}_i\big)=u_0^*(V,\ti\phi)|_{(\prt\Si)_i}\,,$$
and $p_i\!\in\!\Si'$.
The vector spaces $V_{p_i}'$ are canonically oriented by their complex structures.
By \cite[Proposition~4.8]{Ge},
an orientation on $D_{u_0}'$ is determined by the trivializations~(1)
and~(2) in the statement of the proposition and 
the effect of the changes in these choices on the orientation of~$D_{u_0}'$
is described by the first sum in~\eref{orientprop_e}.
By the proof of Lemma~\ref{eta_lem}, an orientation of $D_{u_0;i}$ is determined 
by the homotopy class of a trivialization of 
$u_0^*(V,\ti\phi)$ over~$(\prt\Si)_i$ and changing the homotopy class changes
the induced orientation.
This implies the claim.
\end{proof}


\section{Moduli spaces of maps with crosscaps}
\label{sec:ls2}

\noindent
We begin this section by constructing moduli spaces of $J$-holomorphic maps from
oriented sh-surfaces.
We then discuss implications of Proposition~\ref{orient_prop} for the local systems 
of orientations on these spaces, giving an explicit formula for their first Stiefel-Whitney
classes;
see Corollary~\ref{ls_cor}.\\

\noindent
As in~\cite{Ge}, let $\cJ_{\Si}$ and $\cD_{\Si}$ denote the space of 
all almost complex structures on~$\Si$ and the group of diffeomorphisms
of~$\Si$ preserving the orientation and the boundary components, respectively.
The~map 
$$\cJ_c/\cD_c \lra \cJ_{\Si}/ \cD_{\Si}$$ 
is surjective, but has infinite-dimensional fibers unless $c\!=\!\id_{\prt\Si}$
(in which case $\cJ_c\!=\!\cJ_{\Si}$ and $\cD_c\!=\!\cD_{\Si}$).
We define subspaces $\cJ_c^*\!\subset\!\cJ_c$ and  $\cD_c^*\!\subset\!\cD_c$ so that 
the~map 
\BE{DMisom_e} \cJ_c^*/\cD_c^* \lra \cJ_{\Si}/ \cD_{\Si} \EE
induced by the inclusions $\cJ_c^*\!\lra\!\cJ_{\Si}$ and $\cD_c^*\!\lra\!\cD_{\Si}$
is an isomorphism, 
whenever $(\Si,c)$ is not a disk with an involution different from the identity.\\

\noindent
If $\Si$ is a disk, we identify $\Si$ with the unit disk in~$\C$ so that $c$ corresponds
to either the identity map or the antipodal involution~$\fa$  on $S^1\!\subset\!\C$.
Let $\cJ_c^*\!=\!\{\fJ_0\}$, 
where $\fJ_0$ is the standard complex structure on the disk.
If $c\!=\!\id_{S^1}$, we take
$$\cD_c^*=\PGL_2^0\R\equiv\Big\{z\!\lra\!v\frac{z\!+\!\bar{a}}{1\!+\!az}\!:\,
v\!\in\!S^1,\,a\!\in\!\C,\,|a|\!<\!1\Big\};$$
this is the group of holomorphic automorphisms of the disk.
If $c\!=\!\fa$, we take $\cD_c^*$ to be the subgroup of $\PGL_2^0\R$
consisting of the standard rotations of~$S^1$.
The map~\eref{DMisom_e} is then surjective, since
for any other complex structure~$\fJ$ on the disk compatible with the orientation, 
there exists an orientation-preserving diffeomorphism $h$ of the disk such 
that $\fJ\!=\!h^*\fJ_0$; see \cite[Corollary~1.9.5]{AG}.
In particular, the map~\eref{DMisom_e} is an isomorphism if $c\!=\!\id_{S^1}$;
if $c\!=\!\fa$, this map takes a point with the trivial $S^1$-action 
to a point with the trivial $\PGL_2^0\R$-action.\\

\noindent
Suppose next that $\Si$ is a cylinder with ordered boundary components $(\prt\Si)_1$
and~$(\prt\Si)_2$.
Let $\oI\!=\!(0,1)$. For each $r\!\in\!\oI$, we define 
$$A_r=\big\{z\!\in\!\C\!:\,(|z|\!-\!r)(r|z|\!-\!1)\le0\big\},\quad
(\prt A_r)_1=\big\{z\!\in\!\C\!:\,|z|\!=\!r\big\}, \quad
(\prt A_r)_2=\big\{z\!\in\!\C\!:\,r|z|\!=\!1\big\}.$$
Choose a smooth map
$$\Psi\!: \oI\!\times\!\Si\lra\C^*, \qquad
\Psi(r,z)\lra \Psi_r(z),$$
such that each map
$$\Psi_r\!: \big(\Si,(\prt\Si)_1,(\prt\Si)_2\big)
\lra  \big(A_r,(\prt A_r)_1,(\prt A_r)_2\big), \qquad r\in\oI,$$
is a diffeomorphism so that $\fa\!\circ\!\Psi_r\!=\!\Psi_r\!\circ\!c_i$
on $(\prt\Si)_i$
if $|c_i|\!=\!1$ and $i\!=\!1,2$ and the diffeomorphisms
$$\Psi_r\!\circ\!\Psi_{r'}^{-1}\!: A_{r'}\lra A_r, \qquad r,r'\in\oI,$$
commute with the standard action of $S^1\!\subset\!\C^*$ on~$\C$.
The last condition implies~that the $S^1$-action on~$\Si$ given~by
\BE{S1act_e}S^1\times\Si\lra\Si, \qquad u\cdot z=\Psi_r^{-1}\big(u\Psi_r(z)\big)
\quad\forall\,z\!\in\!\Si,\,u\!\in\!S^1\!\subset\!\C,\EE
is independent of $r\!\in\!\oI$.
In this case, we take 
$$\cJ_c^*=\big\{\Psi_r^*\fJ_0\!:\,r\!\in\!\oI\big\}$$
and $\cD_c^*\!\subset\!\cD_c$ to be the subgroup corresponding to the action~\eref{S1act_e}.
The latter is the group of automorphisms of each complex structure in $\cJ_c^*$
that preserve each boundary component of~$\Si$.
By the classification of complex structures on the cylinder \cite[Section~9]{AG}, 
for every $\fJ\!\in\!\cJ_{\Si}$, there exist a unique 
$r\!\in\!\oI$ and a diffeomorphism~$h$ of~$\Si$ preserving the orientation
and the boundary components such that $\fJ\!=\!h^*\Psi_r^*\fJ_0$.
It follows that  the map~\eref{DMisom_e} is an isomorphism.\\

\noindent
If $\Si$ is not a disk or a cylinder, i.e.~the genus of its double is at least~2,
we identify each boundary component $(\prt\Si)_i$ of $\prt\Si$ with $S^1$ in such 
a way that $c_i\!\equiv\!c|_{(\prt\Si)_i}$ corresponds to either the identity or
the antipodal map on~$S^1$
and denote by $\cD_i$ the subgroup of diffeomorphisms of $(\prt\Si)_i$ corresponding
to the rotations of~$S^1$ under this identification.
For each $\fJ\!\in\!\cJ_{\Si}$, there exists a unique metric $\hat{g}_{\fJ}$
on the double $(\hat\Si',\hat\fJ')$ of $(\Si,\fJ)$ with respect to the involution
$\id_{\prt\Si}$ so that $\hat{g}_{\fJ}$ has constant scalar curvature~-1
and is compatible with~$\hat\fJ'$.
Each boundary component $(\prt\Si)_i$ is a geodesic with respect to~$\hat{g}_{\fJ}$,
and each isometry of $(\prt\Si)_i$ with respect to $\hat{g}_{\fJ}$ is real-analytic
with respect to~$\fJ$.
We denote by $\cJ_c^*\!\subset\!\cJ_{\Si}$ the subspace of complex structures~$\fJ$
so that each $\cD_i$ is the group of isometries of~$(\prt\Si)_i$ with respect to~$\hat{g}_{\fJ}$ 
and by $\cD_c^*$ the subgroup of diffeomorphisms of $\Si$ that preserve the orientation
and the boundary components and restrict to elements of~$\cD_i$ on each boundary component
$(\prt\Si)_i$ of~$\Si$. 
Since $c_i\!\in\!\cD_i$ for each~$i$, 
$\cJ_c^*\!\subset\!\cJ_c$ and $\cD_c^*\!\subset\!\cD_c$.
We verify in the proof of Lemma~\ref{DM_lmm} that the map~\eref{DMisom_e} is an isomorphism
in this case as well.\\

\noindent 
If $(X,\phi)$ and $(\Si,c)$ are as above, $g$ is the genus of~$\Si$, and
$\b$ is a tuple of homology classes as in~\eref{btuple_eq},
and $\k\!=\!(k_1,\ldots,k_{|c|_0+|c|_1})$ is a tuple of nonnegative integers, 
we define
$$\cH_{g,\k}^*(X,\b)^{\phi,c}=
\big(\fB_{g,\k}(X,\b)^{\phi,c}\times\cJ_c^*\big)/\cD_c^*\,.$$
If in addition $J$ is an almost complex structure on~$X$ such that $\phi^*J\!=\!-J$, 
let
$$\mf_{g,\k}(X,J,\b)^{\phi,c}=
\big\{[u,\x_1,\ldots,\x_{|c|_0+|c|_1},\fJ]\!\in\!\cH_{g,\k}^*(X,\b)^{\phi,c}\!:~
\dbar_{J,\fJ}u\!=\!0\big\},$$
where $\dbar_{J,\fJ}$ is as in~\eref{dbar_e},
be the moduli space of marked $J$-holomorphic maps from Riemann sh-surfaces 
that intertwine the involutions on the boundary.
In the case~$X$ is a point and $\b$ is the zero tuple, 
we denote $\cH_{g,{\mathbf 0}}^*(X,\b)^{\phi,c}$ by $\cM_{\Si}^c$.

\begin{lem}\label{DM_lmm}
If $(\Si,c)$ is an sh-surface and $\Si$ is not a disk or a cylinder,
the~map
\BE{DMlmm_e}\cM_{\Si}^c\lra\cM_{\Si} ,\EE
induced by the inclusions $\cJ_c^*\!\lra\!\cJ_{\Si}$ and $\cD_c^*\!\lra\!\cD_{\Si}$,
is an isomorphism.
\end{lem}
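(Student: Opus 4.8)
The plan is to observe that the map \eref{DMlmm_e} is exactly the case in which $X$ is a point and $\b$ is the zero tuple of the map \eref{DMisom_e}, so the task reduces to showing that the natural map $\cJ_c^*/\cD_c^*\!\lra\!\cJ_{\Si}/\cD_{\Si}$ is an isomorphism of orbifolds. Since $(\Si,c)$ is neither a disk nor a cylinder, the double $\hat\Si'$ of $(\Si,\fJ)$ with respect to $\id_{\prt\Si}$ has genus at least~$2$, so the constant curvature~$-1$ metric $\hat g_{\fJ}$ on the closed surface $\hat\Si'$ used to define $\cJ_c^*$ exists and is unique; this is precisely where the hypothesis is used. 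Two facts will drive the argument. \emph{Fact 1}: if $h\!:(\Si,\fJ_2)\!\lra\!(\Si,\fJ_1)$ is biholomorphic and preserves each boundary component, then by Schwarz reflection across the boundary geodesics it extends to a biholomorphism $\hat h$ of the doubles intertwining the reflections, and by uniqueness of the hyperbolic metric on the compact surface $\hat\Si'$ one has $\hat h^*\hat g_{\fJ_1}\!=\!\hat g_{\fJ_2}$; restricting to $\Si\!\subset\!\hat\Si'$ shows that $h$ is an isometry $(\Si,\hat g_{\fJ_2}|_\Si)\!\to\!(\Si,\hat g_{\fJ_1}|_\Si)$. \emph{Fact 2}: any orientation-preserving diffeomorphism of $\prt\Si$ preserving each boundary component extends to an element of $\cD_{\Si}$, via the collar construction in the proof of Lemma~\ref{iso_lm}. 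Recall also that $\fJ\!\in\!\cJ_{\Si}$ lies in $\cJ_c^*$ precisely when, under the fixed identification $(\prt\Si)_i\!\approx\!S^1$, the restriction $\hat g_{\fJ}|_{(\prt\Si)_i}$ is a positive constant multiple of $d\th^2$ for each $i$, i.e.~$\cD_i$ is the group of orientation-preserving isometries of $((\prt\Si)_i,\hat g_{\fJ})$.

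For surjectivity, given $\fJ\!\in\!\cJ_{\Si}$ I would parametrize each boundary geodesic $(\prt\Si)_i$ of $(\Si,\hat g_{\fJ}|_\Si)$ by arc length, starting at a fixed point and running in the direction of the boundary orientation, rescaled to period~$2\pi$; this yields $\mu_i\!:S^1\!\lra\!(\prt\Si)_i$ with $\mu_i^*\hat g_{\fJ}=(\ell_i/2\pi)^2\,d\th^2$, where $\ell_i$ is the length of $(\prt\Si)_i$. Using Fact~2, choose $h\!\in\!\cD_{\Si}$ whose restriction to each $(\prt\Si)_i$ equals $\mu_i$ followed by the fixed identification. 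Since $h$ is a biholomorphism $(\Si,h^*\fJ)\!\to\!(\Si,\fJ)$, Fact~1 gives $\hat g_{h^*\fJ}|_\Si=h^*(\hat g_{\fJ}|_\Si)$, so under the fixed identification $\hat g_{h^*\fJ}|_{(\prt\Si)_i}=\mu_i^*\hat g_{\fJ}$ is a constant multiple of $d\th^2$; hence $h^*\fJ\!\in\!\cJ_c^*$ (and $\cJ_c^*\!\subset\!\cJ_c$, as noted in the text). Thus every point of $\cJ_{\Si}/\cD_{\Si}$ is in the image.

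For injectivity, suppose $\fJ_1,\fJ_2\!\in\!\cJ_c^*$ and $h\!\in\!\cD_{\Si}$ satisfy $h^*\fJ_1\!=\!\fJ_2$. By Fact~1, $h|_{(\prt\Si)_i}$ is an isometry from $((\prt\Si)_i,\hat g_{\fJ_2})$ to $((\prt\Si)_i,\hat g_{\fJ_1})$; since both $\fJ_j$ lie in $\cJ_c^*$, under the fixed identification both metrics are constant multiples of $d\th^2$, so this map is a rotation or a reflection, and it is orientation-preserving on $(\prt\Si)_i$ because $h$ preserves the orientation of~$\Si$. Hence $h|_{(\prt\Si)_i}\!\in\!\cD_i$ for every $i$, so $h\!\in\!\cD_c^*$ and $\fJ_1,\fJ_2$ lie in one $\cD_c^*$-orbit. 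Taking $\fJ_1\!=\!\fJ_2$ in this argument shows that the $\cD_{\Si}$-stabilizer of any $\fJ\!\in\!\cJ_c^*$ already lies in $\cD_c^*$, so the isotropy groups agree. Finally, the bijection is open — hence a homeomorphism, and therefore \eref{DMlmm_e} is an isomorphism — because the arc-length construction above depends continuously on $\fJ$ and thus yields continuous local sections of $\cJ_c^*\!\lra\!\cJ_{\Si}/\cD_{\Si}$, the only remaining ambiguity in the choices being the $\cD_i$-action, consistent with the target $\cJ_c^*/\cD_c^*$.

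The step I expect to be the main obstacle is the verification $h^*\fJ\!\in\!\cJ_c^*$ in the surjectivity argument, which rests on the compatibility $\hat g_{h^*\fJ}\!=\!\hat h^*\hat g_{\fJ}$ of the hyperbolic metric with pullback by a boundary-reparametrizing diffeomorphism. This uses Schwarz reflection to make sense of $\hat h$ on the \emph{compact} double together with the uniqueness of the hyperbolic metric there, and neither ingredient is available when $\Si$ is a disk or a cylinder — which is exactly why those cases are excluded and treated separately. The remaining continuity and openness bookkeeping in the last paragraph is routine but must be carried out to upgrade the set-theoretic bijection to an isomorphism.
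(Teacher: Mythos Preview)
Your proof is correct and follows essentially the same approach as the paper's: both use the uniqueness of the hyperbolic metric on the genus~$\ge\!2$ double to conclude that any $h\!\in\!\cD_{\Si}$ intertwining two structures in $\cJ_c^*$ restricts to an isometry on each boundary circle (hence lies in $\cD_c^*$), and both obtain surjectivity by extending arc-length reparametrizations of the boundary geodesics to a diffeomorphism of~$\Si$ via the collar construction of Lemma~\ref{iso_lm}. Your Fact~1 makes explicit the identity $\hat g_{h^*\fJ}\!=\!h^*\hat g_{\fJ}$ (via Schwarz reflection and uniqueness) that the paper invokes directly in its surjectivity paragraph.
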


\begin{proof}
If $\fJ\!\in\!\cJ_c^*$ and $h\!\in\!\cD_{\Si}$ are such that $h^*\fJ\!\in\!\cJ_c^*$, 
then each parametrized boundary component of~$\Si$ is a geodesic with respect to 
the metrics $\hat{g}_{\fJ}$ and $h^*\hat{g}_{\fJ}$ on  the doubles 
$(\hat\Si',\hat\fJ')$ and $(\hat\Si',\widehat{h^*\fJ}')$.
Thus, the restriction of $h$ to each boundary component of~$\Si$ is an isometry
with respect to the metric~$\hat{g}_{\fJ}$,
and so $h|_{(\prt\Si)_i}\!\in\!\cD_i$ and $h\!\in\!\cD_c^*$.
Thus, the map~\eref{DMisom_e} is injective and continuous (since it is induced
by inclusions before taking the quotients).\\

\noindent
Suppose $\fJ\!\in\!\cJ_{\Si}$. For each boundary component $(\prt\Si)_i$ of $\Si$,
let $f\!:(\prt\Si)_i\!\lra\!(\prt\Si)_i$ be an orientation-preserving geodesic parametrization 
of the target with respect to the metric $\hat{g}_{\fJ}$ and 
the chosen parametrization of the domain.
Similarly to the proof of Lemma~\ref{iso_lm}, $f$ extends to a diffeomorphism~$h$
of $\Si$ that preserves the orientation and the boundary components.
By the assumption on~$f$, each parametrized boundary component $(\prt\Si)_i$
is a geodesic with respect to $h^*\hat{g}_{\fJ}\!=\!\hat{g}_{h^*\fJ}$
and so $h^*\fJ\!\in\!\cJ_c^*$.
Thus, the map~\eref{DMisom_e} is surjective and open
(since $h$ can be chosen to depend continuously on~$\fJ$). 
\end{proof}

\begin{cor}[of Proposition~\ref{orient_prop}]\label{ls_cor}  
Let $(X,\phi)$, $(\Si,c)$, $g$, $\b$, $\k$, and $X_{\phi,c}$ be as before.
There is a local system $\cZ^{\tnd\phi,c}_{(w_1,w_2)}$ on $X_{\phi,c}$ 
such that the local system of orientations on the moduli space 
$\mf_{g,\k}(X,J,\b)^{\phi,c}$ is isomorphic to 
$\wt\ev^*\cZ^{\tnd\phi,c}_{(w_1,w_2)}$.
An isomorphism between the two systems is determined by trivializations~of 
\begin{enumerate}[label=(\arabic*),leftmargin=*]
\item $\La^{\top}_{\R}(TX^{\phi})$ over a basepoint of each component of $X^{\phi}$,
\item $TX^{\phi}\oplus 3\La^{\top}_{\R}TX^{\phi}$ over
representatives for free homotopy classes of loops in~$X^{\phi}$
(one representative for each homotopy class), and
\item $(TX,\tnd\phi)$ over representatives for free homotopy classes of maps $S^1\!\lra\!X$
intertwining $\phi$ and the antipodal involution~$\fa$ on~$S^1$.
\end{enumerate}
The effect on this isomorphism of the changes in the above trivializations is described
as in~\eref{orientprop_e}.
\end{cor}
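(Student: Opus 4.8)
The plan is to obtain this as a specialization of Propositions~\ref{evmap_prop} and~\ref{orient_prop} to the real bundle pair $(V,\ti\phi)=(TX,\tnd\phi)$, once the local system of orientations of $\mf_{g,\k}(X,J,\b)^{\phi,c}$ is identified with $\cZ_{w_1(\det D_{(TX,\tnd\phi)})}$ for the family $D_{(TX,\tnd\phi)}$ of real Cauchy--Riemann operators of Remark~\ref{fam_rem}. First I would recall the standard local description of the $\dbar_{J,\fJ}$-problem: at a $J$-holomorphic $[u,\x_1,\ldots,\x_{|c|_0+|c|_1},\fJ]\!\in\!\mf_{g,\k}(X,J,\b)^{\phi,c}$ the deformation complex is the direct sum of the real Cauchy--Riemann operator $D_u$ on $(u^*TX,u^*\tnd\phi)\!\lra\!(\Si,c)$, the space of infinitesimal deformations of $\fJ$ within $\cJ_c^*$, and the Lie algebra of $\cD_c^*$. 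The middle summand carries a canonical complex orientation, and the last is trivial in the stable case and a one- or three-dimensional space canonically oriented by the generating rotation vector fields in the unstable cases (the disk and the cylinder), just as in the corresponding cases of \cite{Ge} and~\cite{Teh}. Hence the local system of orientations of $\mf_{g,\k}(X,J,\b)^{\phi,c}$ is the restriction of $\cZ_{w_1(\det D_{(TX,\tnd\phi)})}$ under the inclusion $\mf_{g,\k}(X,J,\b)^{\phi,c}\!\hookrightarrow\!\cH_{g,\k}^*(X,\b)^{\phi,c}$.

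Next I would note that the constructions and conclusions of Propositions~\ref{evmap_prop} and~\ref{orient_prop} go through verbatim with $\cD_c^*,\cJ_c^*$ in place of $\cD_c,\cJ_c$ throughout: their proofs use only Remark~\ref{con_rmk}, the splitting~\eref{orientsplit_e}, and Corollary~\ref{orient_cor}, and a loop in $\cH_{g,\k}^*(X,\b)^{\phi,c}$ lifts, via Lemmas~\ref{iso_lm} and~\ref{lift_cor}, to a path in $\fB_{g,\k}(X,\b)^{\phi,c}\!\times\!\cJ_c^*$ satisfying the hypotheses of Corollary~\ref{orient_cor}. Thus over the starred spaces there is an evaluation map $\ev$ to $X_{\phi,c}$ identifying $\cZ_{w_1(\det D_{(TX,\tnd\phi)})}$ with $\ev^*\cZ^{\tnd\phi,c}_{(w_1,w_2)}$, and for $\k\!=\!\1$ this pushes down along the forgetful map to $\wt\ev^*\cZ^{\tnd\phi,c}_{(w_1,w_2)}$, where $\cZ^{\tnd\phi,c}_{(w_1,w_2)}$ is the local system on $X_{\phi,c}$ attached to $(TX,\tnd\phi)$ in Section~\ref{sec:ls}; since the family $D_{(TX,\tnd\phi)}$ does not depend on the boundary marked points, the same holds for general $\k$ after forgetting the extra points. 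Restricting to $\mf_{g,\k}(X,J,\b)^{\phi,c}$ and combining with the first step gives the first assertion, and applying Proposition~\ref{orient_prop} with $(V,\ti\phi)=(TX,\tnd\phi)$, so that $V^{\ti\phi}=TX^{\phi}$, turns its data~(1)--(3) and the changes $o^{\ti\phi}_{\R},s^{\ti\phi}_{\R},o^{\ti\phi}_{\C}$ into exactly those of the corollary, with the effect on the isomorphism given by~\eref{orientprop_e}; the canonical orientations of the $\cJ_c^*$- and $\cD_c^*$-directions found in the first step do not enter this formula.

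The main obstacle I expect is this first step: reconciling the orientation conventions for the full deformation complex with the determinant line $\det D_{(TX,\tnd\phi)}$ and, in particular, verifying that the orientations of the automorphism and deformation directions are canonical and hence absent from the change formula~\eref{orientprop_e}. This needs a little extra care for the unstable domains not treated in~\cite{Ge} --- the disk with $c\!=\!\fa$ and the cylinder, whose automorphism groups are circles rather than $\PSL_2^0\R$ --- but once it is in place, everything downstream is a formal consequence of Propositions~\ref{evmap_prop} and~\ref{orient_prop} together with Remark~\ref{con_rmk}.
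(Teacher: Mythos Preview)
Your approach is essentially the same as the paper's: both identify the orientation system of the moduli space with $\cZ_{w_1(\det D_{(TX,\tnd\phi)})}$ tensored with canonically oriented pieces coming from domain deformations and automorphisms, observe that Propositions~\ref{evmap_prop} and~\ref{orient_prop} (together with the supporting lemmas) carry over verbatim to the starred spaces $\cH_{g,\k}^*(X,\b)^{\phi,c}$, and then specialize to $(V,\ti\phi)=(TX,\tnd\phi)$. The paper organizes the stable case by invoking Lemma~\ref{DM_lmm} to transport the canonical orientation of $\cM_\Si$ to $\cM_\Si^c$ and refers to \cite[Corollary~1.8]{Ge} for the packaging, whereas you unpack the deformation complex directly; these amount to the same thing, and your treatment of the disk and cylinder cases matches the paper's closing paragraph.
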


\begin{proof}
Suppose first that $\Si$ is not a disk or a cylinder. 
The proof of Lemma~\ref{iso_lm} then applies with $\cD_c$ 
replaced by~$\cD_c^*$, once $f_t$ in the first part of the third paragraph 
is chosen to be a path in~$\cD_i$.
It follows that the statements and proofs of Lemmas~\ref{def_lm} and~\ref{lift_cor} 
and Theorem~\ref{main_thm} with $(\cJ_c,\cD_c)$ replaced by $(\cJ_c^*,\cD_c^*)$ hold as well.
Therefore, Corollary~\ref{orient_cor} and Propositions~\ref{evmap_prop} and~\ref{orient_prop} 
apply with $\cH_{g,\k}(X,\b)^{\phi,c}$ replaced by $\cH_{g,\k}^*(X,\b)^{\phi,c}$.
In light of Lemma~\ref{DM_lmm}, Corollary~\ref{ls_cor} follows from 
Proposition~\ref{orient_prop} with $\cH_{g,\k}(X,\b)^{\phi,c}$ replaced by 
$\cH_{g,\k}^*(X,\b)^{\phi,c}$
by the same argument as \cite[Corollary~1.8]{Ge} follows from \cite[Theorem~1.7]{Ge}.\\

\noindent
If $\Si$ is a disk or a cylinder, the general principles of the proof of 
\cite[Corollary~1.8]{Ge} still apply.
The orientation of $T\mf_{g,\k}(X,J,\b)^{\phi,c}$ at each point of $\mf_{g,\k}(X,J,\b)^{\phi,c}$
is determined by orientations for the index of a Cauchy-Riemann operator on a real bundle pair
and either
the appropriate Deligne-Mumford space or the automorphism group of the complex structures.
With our choices of $\cJ_c^*$ and $\cD_c^*$, the last two objects are canonically oriented.
\end{proof}

\noindent
By Corollary~\ref{ls_cor}, \cite[Corollary~1.6]{Ge}, and Lemma~\ref{w2equiv_lmm},
$\mf_{g,\k}(X,J,\b)^{\phi,c}$ is orientable~if
\begin{enumerate}[label=(\arabic*),leftmargin=*]
\item $X^{\phi}\!\subset\!X$ is orientable and
$w_2(X^{\phi})\!=\!\ka^2\!+\!\vp|_{X^{\phi}}$ for some \hbox{$\ka\!\in\!H^1(X^{\phi};\Z_2)$}
and \hbox{$\vp\!\in\!H^2(X;\Z_2)$}, 
\item and $w_2^{\La_{\C}^{\top}\tnd\phi}(\La_{\C}^{\top}TX)$ is a square class,
e.g.~if either $\pi_1(X)\!=\!0$ and $w_2(TX)\!=\!0$ or 
$\La_{\C}^{\top}(TX,\tnd\phi)$ admits a real square root;
\end{enumerate}
the first condition is not needed if $|c|_0\!=\!0$, while 
the second condition is not needed if $|c|_1\!=\!0$.
For example, these moduli spaces are orientable for
a smooth quintic hypersurface~$X$ in~$\P^4$ cut out by an equation with
real coefficients and with the involution~$\phi$ being the restriction 
of the standard involution~$\tau_4$ on~$\P^4$.
If 
$$w_2(X^{\phi})=\ka^2+\vp|_{X^{\phi}}
\qquad\hbox{for some}\quad 
\ka\!\in\!H^1(X^{\phi};\Z_2),~\vp\!\in\!H^2(X;\Z_2),$$
but $X^{\phi}$ is not orientable, and (2) above still holds,
the orientation system of $\mf_{g,\k}(X,J,\b)^{\phi,c}$ is a pull-back/push-down of 
several copies of the orientation system of the Lagrangian~$X^{\phi}$.
The  $|c|_1\!=\!0$ case of these results contains
\cite[Theorem~8.1.1]{FOOO} and \cite[Theorem 1.1]{Sol}.
However, the presence of $w_2$ in~\eref{zf_eq} means that in  general the local system of 
orientations on $\mf_{g,\k}(X,J,\b)^{\phi,c}$ is not the pull-back of a system on~$X$
or~$X^{\phi}$.\\
  
\noindent
As described in \cite[Section~1]{Ge}, the index bundles $D_{V,\ti\phi}$
over $\mf_{g,\k}(X,J,\b)^{\phi,c}$ constructed as in Remark~\ref{fam_rem}
are expected to play a prominent role in the computation of real Gromov-Witten
invariants of submanifolds, such as complete intersections in projective spaces.
Specifically, given $n\!\in\!\Z^+$ and an $m$-tuple $\a=(a_1,\ldots,a_m)$
of positive integers,  let
$$V_{n;\a}=\cO_{\P^n}(a_1)\oplus \ldots \oplus \cO_{\P^n}(a_m)\lra \P^n.$$
This bundle admits a natural lift $\ti\tau_{n;\a}$ of the standard involution~$\tau_n$ on~$\P^n$
given by the conjugation of each homogeneous component.
If $|\b|$ is sufficiently large, the operators $D_{V_{n;\a},\ti\tau_{n;\a}}$ over 
$\mf_{g,\k}(\P^n,\b)^{\phi,c}$ are surjective and their kernels form a vector bundle
$$\cV_{n;\a}\lra \mf_{g,\k}(\P^n,\b)^{\tau_n,c}$$
whose orientation bundle is $\det D_{V_{n;\a},\ti\tau_{n;\a}}$.
The euler class of this bundle, with suitable boundary conditions,
is expected to relate real Gromov-Witten invariants of a complete intersection $X_{n;\a}$
to real Gromov-Witten invariants of~$\P^n$.
The following corollary, which extends \cite[Corollary~1.10]{Ge}, suggests
that it may indeed be possible to integrate $e(\cV_{n;\a})$ over 
$\mf_{g,\k}(\P^n,\b)^{\tau_n,c}$ when $n\!-\!|\a|$ is odd.
In these cases, the moduli space $\mf_{g,\k}(X_{n;\a},\b)^{\tau_n,c}$ is oriented 
and in fact has a canonical orientation, 
constructed using the Euler sequence for $\P^n$ and the normal bundle sequence for~$X_{n;\a}$,
similarly to the proof of \cite[Corollary~1.10]{Ge};
see also the proof of \cite[Proposition~7.5]{Ge2} for similar results for compactified moduli spaces
with $\Si\!=\!D^2$.

\begin{cor}\label{some_cor}
Let $n\!\in\!\Z^+$, $m\!\in\!\Z^{\ge0}$, $\a\!\in\!(\Z^+)^m$ be such that 
$n\!-\!|\a|$ is odd and $(\Si,c)$, $\k$, and $\b$ for $(X,\phi)\!=\!(\P^n,\tau_n)$ be as before. 
If $|\b|$ is sufficiently large, the line bundles
$$\La^{\top}_{\R}\cV_{n;\a},~\La^{\top}_\R T\mf_{g,\k}(\P^n,\b)^{\tau_n,c}\lra 
\mf_{g,\k}(\P^n,\b)^{\tau_n,c}$$
are canonically isomorphic up to multiplication by $\R^+$ in each fiber.  
\end{cor}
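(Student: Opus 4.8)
\emph{Overview and first reduction.} The plan is to reduce the claim to a canonical isomorphism, up to multiplication by $\R^+$, of two determinant line bundles over $\mf_{g,\k}(\P^n,\b)^{\tau_n,c}$, and then to establish that isomorphism from the Euler sequence of $\P^n$ together with the parity of $n+1-|\a|$. First I would replace both line bundles by index bundles of families of real Cauchy--Riemann operators. Since $D_{(V_{n;\a},\ti\tau_{n;\a})}$ is surjective for $|\b|$ large, $\cV_{n;\a}$ is its index bundle and $\La^{\top}_{\R}\cV_{n;\a}=\det D_{(V_{n;\a},\ti\tau_{n;\a})}$ canonically. On the other side, the deformation theory of $\dbar$ for holomorphic maps from oriented sh-surfaces --- built on the Fredholm theory of Section~\ref{double_sec} and organized by the choices $\cJ_c^*,\cD_c^*$ of Section~\ref{sec:ls2} --- gives a canonical (up to $\R^+$) identification of $\La^{\top}_{\R}T\mf_{g,\k}(\P^n,\b)^{\tau_n,c}$ with $\det D_{(T\P^n,\tnd\tau_n)}\otimes\La^{\top}_{\R}T\cM$, where $\cM$ is the appropriate moduli of domains (for $\Si$ a disk or a cylinder, the quotient $\cJ_c^*/\cD_c^*$ of Section~\ref{sec:ls2}), and $T\cM$ is canonically oriented by Lemma~\ref{DM_lmm} and the explicit descriptions of Section~\ref{sec:ls2}. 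Thus it suffices to construct a canonical (up to $\R^+$) isomorphism $\det D_{(T\P^n,\tnd\tau_n)}\cong\det D_{(V_{n;\a},\ti\tau_{n;\a})}$.

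\emph{The Euler sequence and the trivial pair.} Next I would pull back the Euler sequence
\[
0\lra(\cO_{\P^n},\ti\tau)\lra(\cO_{\P^n}(1)^{\oplus(n+1)},\ti\tau)\lra(T\P^n,\tnd\tau_n)\lra0
\]
of real bundle pairs over $(\P^n,\tau_n)$ by the universal map and pass to the associated families of real Cauchy--Riemann operators as in Remark~\ref{fam_rem}; then \eqref{sum} gives a canonical isomorphism $\det D_{(T\P^n,\tnd\tau_n)}\otimes\det D_{(\cO_{\P^n},\ti\tau)}\cong\bigl(\det D_{(\cO_{\P^n}(1),\ti\tau)}\bigr)^{\otimes(n+1)}$. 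I would then observe that $\det D_{(\cO_{\P^n},\ti\tau)}$ is canonically oriented: for the trivial real bundle pair $(\cO_{\P^n},\ti\tau)$ the trivializations (1)--(3) entering Proposition~\ref{orient_prop} are all canonical (the real part $\R\P^n\!\times\!\R$, and its pullbacks along antipodal loops, carry canonical trivializations), while the Stiefel--Whitney data of $(\cO_{\P^n},\ti\tau)$ occurring in \eqref{zf_eq} all vanish, so its orientation local system is canonically trivial. Hence, up to canonical identifications and $\R^+$, $\det D_{(T\P^n,\tnd\tau_n)}\cong\bigl(\det D_{(\cO_{\P^n}(1),\ti\tau)}\bigr)^{\otimes(n+1)}$.

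\emph{The bundle $V_{n;\a}$ and conclusion.} Then I would treat $V_{n;\a}=\bigoplus_j\cO_{\P^n}(a_j)$ similarly. For each $j$ the real bundle pairs $u^*\cO_{\P^n}(a_j)\oplus(\Si\!\times\!\C^{a_j-1})$ and $u^*\cO_{\P^n}(1)^{\oplus a_j}$ over $(\Si,c)$, with their natural real structures, have the same rank, the same Maslov index $a_j\mu(u^*\cO_{\P^n}(1))$, and isomorphic real parts over every boundary component $(\prt\Si)_i$ with $|c_i|=0$; by \cite[Propositions~4.1,~4.2]{BHH} --- and by Lemma~\ref{Maslovind_lmm} when $\Si=D^2$ --- they are isomorphic, and, as in the proof of \cite[Corollary~1.10]{Ge}, the isomorphism can be chosen in families over $\mf_{g,\k}(\P^n,\b)^{\tau_n,c}$. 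Using \eqref{sum}, the canonical triviality of $\det D_{(\cO_{\P^n},\ti\tau)}$, and summing over $j$, this gives $\det D_{(V_{n;\a},\ti\tau_{n;\a})}\cong\bigl(\det D_{(\cO_{\P^n}(1),\ti\tau)}\bigr)^{\otimes|\a|}$ up to canonical identifications and $\R^+$. Combining with the previous paragraph, $\det D_{(T\P^n,\tnd\tau_n)}\cong\det D_{(V_{n;\a},\ti\tau_{n;\a})}\otimes\bigl(\det D_{(\cO_{\P^n}(1),\ti\tau)}\bigr)^{\otimes(n+1-|\a|)}$ up to $\R^+$; since $n-|\a|$ is odd the exponent $n+1-|\a|$ is even, and the tensor square of any real line bundle is canonically trivial up to $\R^+$ (sending $v\otimes v$ to a positive real), so the last factor drops out and $\La^{\top}_{\R}\cV_{n;\a}\cong\La^{\top}_{\R}T\mf_{g,\k}(\P^n,\b)^{\tau_n,c}$ canonically, up to $\R^+$. (The same sequences, together with the normal-bundle sequence of $X_{n;\a}\subset\P^n$, will also yield the canonical orientation of $\mf_{g,\k}(X_{n;\a},\b)^{\tau_n,c}$ mentioned above.)

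\emph{Main obstacle.} The hard part will be the bookkeeping of canonicity. That the two line bundles are \emph{abstractly} isomorphic already follows from Corollary~\ref{orient_cor} by matching Stiefel--Whitney data (this is where one uses that $n+1-|\a|$ is even), so the real work is to pin down a preferred isomorphism up to $\R^+$: one must track the explicit isomorphisms through the Euler sequence, verify that the \cite{BHH}-type stable isomorphisms of the previous paragraph can be arranged in families compatibly with the gluing maps \eqref{sum}, and check that the deformation-theoretic splitting of the first paragraph is the canonical one determined by the $\cJ_c^*,\cD_c^*$ setup. These verifications are routine given Sections~\ref{double_sec}--\ref{sec:ls2} but constitute the substance of the proof, which otherwise parallels that of \cite[Corollary~1.10]{Ge}.
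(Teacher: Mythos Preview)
Your approach is correct in outline and shares the two essential ingredients with the paper's proof---the Euler sequence for $\P^n$ and the parity of $n+1-|\a|$---but the route you take is genuinely different and somewhat heavier than necessary.

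The paper does not attempt a direct chain of determinant-bundle isomorphisms. Instead it invokes the local-system machinery already built: by Proposition~\ref{orient_prop} and Corollary~\ref{ls_cor}, both line bundles have orientation local systems given by~\eref{zf_eq} with $V=T\P^n$ and $V=V_{n;\a}$ respectively; one checks these agree term-by-term (the $|c_i|=0$ terms by \cite[Corollary~1.10]{Ge}, the $|c_i|=1$ terms by $\pi_1(\P^n)=0$ and Lemma~\ref{w2equiv_lmm}, reducing to $w_2(T\P^n)=w_2(V_{n;\a})$). A \emph{canonical} isomorphism is then exactly a consistent matching of the trivialization choices (1)--(3) in Proposition~\ref{orient_prop}. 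Choices (1) and (2) are handled as in \cite[Corollary~1.10]{Ge}; for choice~(3) one trivializes the \emph{combined} bundle $(n+1)\cO_{\P^n}(1)\oplus V_{n;\a}$ over each antipodal loop by picking either $\Z_2$-equivariant trivialization of $\cO_{\P^n}(1)$ and propagating it to every summand, then observes that switching this choice multiplies by $(-1)^{n+1}(-1)^{a_1}\cdots(-1)^{a_m}=1$. So the canonical isomorphism falls out of data over \emph{loops} only.

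Your argument instead asks for bundle-pair isomorphisms $u^*\cO(a_j)\oplus(\Si\!\times\!\C^{a_j-1})\cong u^*\cO(1)^{\oplus a_j}$ over the whole surface, in families, via the BHH classification. This is strictly stronger than what is needed, and---as you correctly flag in your ``Main obstacle''---its canonicity is the entire content. The classification \cite[Propositions~4.1,~4.2]{BHH} gives existence pointwise, not a preferred representative, and the automorphism group of a rank-$a_j$ real bundle pair acts nontrivially on homotopy classes of such isomorphisms; you would still have to argue that the induced sign on determinants is independent of the choice, which in the end amounts to the same parity computation the paper does. The paper's framework sidesteps this by never asking for more than trivializations over one-dimensional pieces, where Proposition~\ref{orient_prop} already tells you precisely which choices matter and how changing them affects the answer via~\eref{orientprop_e}. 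Your route would work once that bookkeeping is completed, but the local-system packaging makes it considerably shorter.
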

  
\begin{proof}
By Proposition~\ref{orient_prop} and Corollary~\ref{ls_cor}, 
the local systems for the two line bundles are isomorphic to the
push-down/pull-back of the local systems corresponding to $T\P^n$ and~$V_{n;\a}$.
The action of a loop $\gm$ in $\mf_{g,\k}(\P^n,\b)^{\tau_n,c}$
on the last two local systems is described by~\eref{zf_eq}
with $V=T\P^n,V_{n;\a}$.
As shown in the proof of \cite[Corollary~1.10]{Ge},
the first sum in~\eref{zf_eq} is the same for $V\!=\!T\P^n,V_{n;\a}$.
Since $\pi_1(\P^n)\!=\!0$, by the last statement of \eref{w2equiv_lmm} this is 
also the case for the second sum if the usual second Stiefel-Whitney classes
of $T\P^n$ and $V_{n;\a}$ are the same; this is indeed so under our assumptions.
Thus, the push-down/pull-back of the local systems are the same, and so the two line
bundles are isomorphic.\\

\noindent
An isomorphism between the local systems for the two line bundles is induced by 
identifications of choices (1)-(3) in Proposition~\ref{orient_prop} for the two bundles.
The proof of \cite[Corollary~1.10]{Ge} describes such identifications for~(1) and~(2).
Identifications for~(3) are described similarly.
They are specified by the canonical, $\Z_2$-equivariant, trivialization over representatives 
for homotopy classes of loops $(S^1,c)\!\lra\!(X,\phi)$,
where $c\!:S^1\!\lra\!S^1$ is the antipodal map, 
of the vector bundle
$$(n\!+\!1)\cO_{\P^n}(1)\oplus V_{n;\a}
=(n\!+\!1)\cO_{\P^n}(1)\oplus\cO_{\P^n}(a_1)\oplus\ldots\oplus\cO_{\P^n}(a_m)$$
with the complex conjugation induced from the natural complex conjugation in $\cO_{\P^n}(1)$.
This canonical trivialization is obtained by taking either of the $\Z_2$-equivariant
trivializations of $\cO_{\P^n}(1)$ and using it to trivialize all of the bundle components.
The effect of changing the  trivialization of $\cO_{\P^n}(1)$ on the trivialization
of the entire bundle~is
$$(-1)^{n+1}\cdot(-1)^{a_1}\cdot\ldots(-1)^{a_m}=1,$$
by our assumption on~$\a$.
\end{proof}

\appendix

\section{Almost complex structures on bordered surfaces}
\label{realanal_app}

\noindent
In this appendix we show that every bordered Riemann surface $(\Si,\fJ)$ can be covered by 
$(\fJ,\fJ_0)$-holomorphic charts
$$\psi\!:(U,U\!\cap\!\prt\Si)\lra (W,W\!\cap\!\R),$$
where $U$ is an open subset of~$\Si$, $W$ is an open subset of $\bH$,
and $\fJ_0$ is the standard complex structure on~$\C$; see Corollary~\ref{sh_cor}.
We also show that every symmetric Riemann surface $(\hat\Si,\fJ,\si)$ can be covered by 
holomorphic charts that intertwine~$\si$ with the standard conjugation~$\si_0$ on~$\C$;
see Corollary~\ref{symsurf_cor}.
These statements are likely known, but we could not find them in the literature
and thus include them with proofs for the sake of completeness.

\begin{lem}\label{realanal_lem}
If $U$ is an open neighborhood of the origin in $\bH$ and $\fJ$ is an almost complex 
structure on~$U$, there exists a diffeomorphism
$$h\!: (U',U'\!\cap\!\R)\lra (W,W\!\cap\!\R)$$
between an open neighborhood of $0$ in $U$ and an open subset $W$ of~$\bH$
such that $\fJ\!=\!h^*\fJ_0$.
\end{lem}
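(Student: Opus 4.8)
The plan is to reduce this to the classical existence of isothermal (conformal) coordinates for a smooth almost complex structure (equivalently, a Riemannian metric) on a surface, and then to handle the boundary by a doubling/reflection argument. First I would observe that near the origin, an almost complex structure $\fJ$ on an open subset of $\bH$ is the restriction of a smooth almost complex structure on an open subset of $\C$: extend the (smooth, $\R$-valued) components of $\fJ$ across $\R$ by any smooth extension (e.g.\ Seeley/Whitney extension), shrinking $U$ so that the extension is still close enough to $\fJ_0$ to be an almost complex structure (i.e.\ squares to $-\Id$; this can be arranged since the condition $J^2=-\Id$ is open among the $2\times2$-matrix-valued functions near $\fJ_0$, or more simply by first normalizing so $\fJ(0)=\fJ_0$ and extending the perturbation). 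Denote the extended structure by $\wt\fJ$ on an open neighborhood $\wt U$ of $0$ in $\C$.

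Next I would apply the Newlander--Nirenberg theorem in dimension two (equivalently the existence of isothermal coordinates, which for smooth data is classical and elementary): there is a diffeomorphism $g$ from a neighborhood of $0$ in $\wt U$ onto an open subset of $\C$ with $g^*\fJ_0=\wt\fJ$, i.e.\ $g$ is $(\wt\fJ,\fJ_0)$-holomorphic. Composing with a translation we may assume $g(0)=0$. The issue is that $g$ need not send $\R$ into $\R$; its restriction $\gamma\equiv g|_{\R}$ is merely a smooth embedded arc through $0$ in $\C$. The key step is to postcompose $g$ with a biholomorphism that straightens $\gamma$ onto~$\R$. Here I use the Schwarz reflection principle: since $\wt\fJ=\ov{\wt\fJ}$ along $\R$ would \emph{not} in general hold, I instead work on the image side, where the target structure is the genuine $\fJ_0$. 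The curve $\gamma$ is a smooth arc; by the Riemann mapping theorem applied to a region bounded by (a smoothing of) $\gamma$, together with boundary regularity (Carathéodory/Kellogg), there is a biholomorphism $F$ from a one-sided neighborhood of $0$ along $\gamma$ onto a one-sided neighborhood of $0$ along $\R$, extending continuously (indeed smoothly) to the boundary arc and sending $\gamma$ to $\R$ near $0$. Then $h\equiv F\circ g$, suitably restricted to $U'\equiv g^{-1}(F^{-1}(W))\cap U$, is the desired diffeomorphism onto an open subset $W$ of $\bH$ with $h(U'\cap\R)=W\cap\R$ and $h^*\fJ_0=g^*F^*\fJ_0=g^*\wt\fJ=\fJ$ on $U'$.

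The main obstacle I expect is the boundary-straightening step: one must ensure that the biholomorphism $F$ straightening the arc $\gamma$ extends smoothly (not just continuously) up to $\R$ and maps the correct side of $\gamma$ to $\bH$. This is exactly where one invokes boundary regularity for conformal maps with smooth boundary (Kellogg's theorem) together with Schwarz reflection across the real-analytic-after-$F$ boundary; an alternative, avoiding quoting boundary regularity, is to note that after an affine change one may assume $\gamma$ is tangent to $\R$ at $0$ and then directly construct $F$ as the holomorphic extension of the arc-length-type reparametrization of $\gamma$, using that $\gamma$ is smooth to get a holomorphic germ. The orientation bookkeeping—checking that the resulting chart lands in the \emph{upper} half-plane rather than the lower one—is routine once one fixes conventions, and if necessary can be corrected by postcomposing with $z\mapsto\bar z$ (which reverses $\fJ_0$, so one instead postcomposes with $z\mapsto -\bar z$ composed appropriately, or simply chooses the correct component of $\gamma$'s complement at the outset). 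Everything else—smooth extension of $\fJ$, openness of the almost-complex condition, Newlander--Nirenberg in real dimension two—is standard.
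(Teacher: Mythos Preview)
Your approach is correct but differs substantially from the paper's argument. The paper solves the problem by a direct PDE method: after normalizing so that $\fJ(0)=\fJ_0$, the condition $\fJ=h^*\fJ_0$ for $h(s,t)=(s+x,t+y)$ becomes a perturbed Cauchy--Riemann system $P\xi+Q\xi=\zeta$, where $P$ is essentially $\bar\partial$ and $Q$ has small coefficients vanishing at the origin. The boundary condition $h(\R)\subset\R$ is imposed from the outset as $(\Im\,\xi)|_{\R}=0$. The paper then uses that the standard $\bar\partial$-operator on the disk with this totally real boundary condition is an isomorphism between suitable Sobolev completions, together with a cutoff and a smallness estimate, to solve for $\xi$ and check that the resulting $h$ is a local diffeomorphism.

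By contrast, you first extend $\fJ$ across $\R$ (which is fine: in real dimension two an orientation-compatible $\fJ$ is determined by the pair $(a,b)$ with $\fJ\partial_x=a\partial_x+b\partial_y$ and $b\neq0$, and these extend by Seeley), apply interior isothermal coordinates to get a holomorphic chart $g$ on a full neighborhood in $\C$, and then straighten the smooth arc $g(\R)$ to $\R$ via Riemann mapping with Kellogg--Warschawski boundary regularity. This is a genuinely different, more geometric decomposition: you trade the paper's explicit $\bar\partial$-estimate with boundary condition for two classical black boxes (isothermal coordinates and smooth boundary regularity of conformal maps). The paper's route is more self-contained and stays within the Fredholm/Sobolev framework used elsewhere in the article; yours is arguably quicker to state but imports a heavier theorem (Kellogg) at the straightening step. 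Your alternative suggestion of building $F$ directly from an ``arc-length-type reparametrization of $\gamma$'' does not work as written, since such a reparametrization is not holomorphic, so you do need the Riemann-mapping-plus-regularity argument there.
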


\begin{proof}
There exist $a,b\!\in\!C^{\i}(U,\R)$  such that 
$$\fJ(x,y)\frac{\prt}{\prt x}=a(x,y)\frac{\prt}{\prt x}+b(x,y)\frac{\prt}{\prt y}\,.$$
By shrinking $U$ if necessary, it can be assumed that $b(x,y)\!\neq\!0$.
With
$$(s,t)=\big(b(0,0)x\!-\!a(0,0)y,y\big),$$ 
we find that 
\begin{gather*}
\fJ(s,t)\frac{\prt}{\prt s}=
\frac{b(0,0)a(x,y)-a(0,0)b(x,y)}{b(0,0)}\frac{\prt}{\prt s}
+\frac{b(x,y)}{b(0,0)}\frac{\prt}{\prt t}\,.
\end{gather*}
Thus, we can assume that
\BE{Jform_e}\fJ(s,t)\frac{\prt}{\prt s}=\al(s,t)\frac{\prt}{\prt s}
+\big(1+\be(s,t)\big)\frac{\prt}{\prt t}\EE
for some $\al,\be\!\in\!C^{\i}(U,\R)$ with $\al(0,0),\be(0,0)\!=\!0$.\\

\noindent
The condition $\fJ\!=\!h^*\fJ_0$ is equivalent to 
$$\tnd h\bigg(\fJ\frac{\prt}{\prt s}\bigg)=\fJ_0\tnd h\bigg(\frac{\prt}{\prt s}\bigg)\,,$$
if $h$ is a diffeomorphism.
With 
$$h(s,t)=\big(s+x(s,t),t+y(s,t)\big)$$ 
and $\fJ$ as in~\eref{Jform_e}, the latter condition is 
equivalent~to 
\BE{hcond_e}P\left(\!\!\!\begin{array}{c}x\\ y\end{array}\!\!\!\right)
+Q\left(\!\!\!\begin{array}{c}x\\ y\end{array}\!\!\!\right)=\ze,\EE
where
$$P\left(\!\!\!\begin{array}{c}x\\ y\end{array}\!\!\!\right)=
\left(\!\!\!\begin{array}{c}x_t\!+\!y_s\\ y_t\!-\!x_s\end{array}\!\!\!\right),\qquad
Q\left(\!\!\!\begin{array}{c}x\\ y\end{array}\!\!\!\right)=
\left(\!\!\!\begin{array}{c}\al x_s\!+\!\be x_t\\ \al y_s\!+\!\be y_t\end{array}\!\!\!\right),
\qquad \ze=-\left(\!\!\!\begin{array}{c}\al\\ \be\end{array}\!\!\!\right).$$
Let $\eta\!:\R\!\lra\![0,1]$ be a smooth function such that
$$\eta(r)=\begin{cases}1,&\hbox{if}~r\le1;\\
0,&\hbox{if}~r\ge2.
\end{cases}$$
We will view $D^2$ as the closure of $\bH$ in $S^2$, i.e.~as the upper hemisphere.
For each $\de\!\ge\!0$, define
$$\eta_{\de}\!: D^2\lra [0,1] \qquad\hbox{by}\qquad 
\eta_{\de}(z)=\eta(|z|/\de).$$
In particular, $\|\eta_{\de}\|_{C^1(D^2)}\le C/\de$ and 
\BE{etaest_e} \begin{split}
\|\eta_{\de}\gm f\|_{L^p_1(D^2)}
&\le \|\eta_{\de}\gm\|_{C^0(D^2)}\|f\|_{L^p_1(D^2)}+\|\eta_{\de}\gm\|_{L^p_1(D^2)}\|f\|_{C^0(D^2)}\\
&\le C_{\gm}(\de+\de^{2/p})\|f\|_{L^p_1(D^2)} \,;
\end{split}\qquad 
 \begin{split}
\forall~&p\!>\!2,~\gm\!=\!\al,\be,\\ &f\!\in\!L^p_1(D^2);
\end{split}
\EE
this estimate uses the vanishing of $\gm$ at the origin.
Since the standard operator
$$\dbar\!: \big\{\xi\!\in\!L^p_2(D^2;\C)\!:\,\xi|_{\R}\!\in\!C^0(\R;\R),~\xi(0)\!=\!0\big\}
\lra L^p_1\big(D^2;(T^*D^2)^{0,1}\big)$$
is an isomorphism, for all $\de\!>\!0$ sufficiently small there exists a unique 
$\xi_{\de}\!\in\!L^p_2(D^2;\C)$ such that 
$$(\Im\,\xi_{\de})\big|_{\R}\!=\!0,\qquad \xi_{\de}(0)=0, \qquad
-\fJ_0 \dbar\xi_{\de}+\eta_{\de}(Q\xi_{\de})\tnd\bar{z}=\eta_{\de}\ze\tnd\bar{z}.$$
Furthermore, 
\BE{xide_e}\|\xi_{\de}\|_{L^2_p(D^2)}\le C\|\eta_{\de}\ze\tnd\bar{z}\|_{L^2_p(D^2)}
\le C'\de^{2/p}.\EE
On the disk of radius $\de$ around the origin, $\xi_{\de}$ restricts to a solution
of~\eref{hcond_e}.
If $p\!>\!2$, \eref{xide_e} implies that $\xi_{\de}'$ is a continuous function and
$|\xi_{\de}'(0)|\le C''\de^{2/p}$.\\

\noindent
Thus, if $\de$ is sufficiently small, the restriction $(x,y)$ of $\xi_{\de}$ to 
a neighborhood~$U'$ of the origin induces a diffeomorphism $h$ satisfying $\fJ\!=\!h^*\fJ_0$.
The condition $(\Im\,\xi_{\de})|_{\R}\!=\!0$ corresponds to $y|_{t=0}\!=\!0$ 
and $h(U'\!\cap\!\R)\!\subset\!\R$.
\end{proof}

\begin{cor}\label{sh_cor}
Let $\Si$ be a bordered surface with an almost complex structure~$\fJ$.
For every $z\!\in\!\Si$, there exists a coordinate chart 
$$\psi\!:(U,U\!\cap\!\prt\Si)\lra (\bH,\R)$$
around $z$ so that $\psi^*\fJ_0\!=\!\fJ$.
The overlap map between any two such charts is a restriction of a bi-holomorphic map
between open subsets of~$\C$.
\end{cor}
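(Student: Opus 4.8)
The plan is to establish the two cases---$z$ in the interior of $\Si$ and $z$ on $\prt\Si$---separately, and then to read off the statement about overlap maps from the Schwarz reflection principle.

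For $z$ in the interior of $\Si$, I would invoke the classical local integrability of almost complex structures on surfaces (equivalently, the local existence of isothermal coordinates): there is a diffeomorphism from a neighborhood of $z$ onto an open subset of $\C$ pulling back $\fJ_0$ to $\fJ$. After shrinking so that the domain is disjoint from $\prt\Si$ and translating so that the image lies inside the interior of $\bH$, this is a chart of the asserted form with $U\cap\prt\Si=\eset$.

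For $z\in\prt\Si$, I would first use that $\Si$ is a smooth bordered surface to pick a smooth chart $\vph$ from a neighborhood of $z$ in $\Si$ onto a neighborhood of $0$ in $\bH$ carrying $\prt\Si$ into $\R$. Transporting $\fJ$ by $\vph$ produces an almost complex structure on a neighborhood of $0$ in $\bH$, to which Lemma~\ref{realanal_lem} applies: it yields a diffeomorphism $h\!:(U',U'\!\cap\!\R)\!\lra\!(W,W\!\cap\!\R)$ onto an open subset $W$ of $\bH$ with $h^*\fJ_0$ equal to that transported structure. Then $\psi\equiv h\circ\vph$, suitably restricted, is a chart with $\psi^*\fJ_0=\fJ$; being a diffeomorphism of manifolds with boundary, $\psi$ carries $U\cap\prt\Si$ onto $W\cap\R$, as required.

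Finally, for two charts $\psi_i\!:U_i\!\lra\!W_i$ ($i=1,2$) as above, the transition $g=\psi_2\circ\psi_1^{-1}$ on $\psi_1(U_1\!\cap\!U_2)$ satisfies $g^*\fJ_0=\fJ_0$, hence is holomorphic on the interior of its domain; moreover $g$ is smooth up to the portion of $\R$ in its domain and sends that portion into $\R$, because each $\psi_i$ maps $\prt\Si$ into $\R$. The key step is then Schwarz reflection: $g$, and likewise $g^{-1}$, extends holomorphically across $\R$, so $g$ is the restriction of a biholomorphic map between open subsets of $\C$ (when neither $U_i$ meets $\prt\Si$ this holds directly, with no reflection needed). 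Beyond Lemma~\ref{realanal_lem} I do not anticipate a real obstacle; the only delicate point is the boundary regularity of $g$, which is automatic once $g$ is recognized as a diffeomorphism between open subsets of $\bH$, so that Schwarz reflection applies verbatim.
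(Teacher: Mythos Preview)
Your proposal is correct and follows essentially the same approach as the paper: the interior case by local integrability, the boundary case by composing a smooth boundary chart with the straightening diffeomorphism from Lemma~\ref{realanal_lem}, and the overlap statement via Schwarz reflection. The paper's argument is identical in structure and detail.
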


\begin{proof}
If $z\!\not\in\!\prt\Si$, such a chart exists because $\fJ$ is integrable on 
$\Si\!-\!\prt\Si$ by Newlander-Nirenberg Theorem; 
in fact, the proof in the $z\!\in\!\prt\Si$ case can be easily adapted to this case.
If $z\!\in\!\prt\Si$, a smooth chart 
$$\psi\!:(U,U\!\cap\!\prt\Si,z)\lra (\bH,\R,0)$$
induces an almost complex structure $\fJ'$ on a neighborhood of the origin in~$\bH$.
Composing~$\psi$ with a diffeomorphism~$h$ provided by Lemma~\ref{realanal_lem}, 
we obtain a desired chart around~$z$.\\

\noindent
If $\psi\!:U\!\lra\!\bH$ and $\psi'\!:U'\!\lra\!\bH$ are two charts as 
in the statement of the corollary, 
$$\psi'\circ\psi^{-1}\!: \big(\psi(U\!\cap\!U'),\psi(U\!\cap\!U')\!\cap\!\R\big)
\lra \big(\psi'(U\!\cap\!U'),\psi'(U\!\cap\!U')\!\cap\!\R\big)$$
is a bi-holomorphic map.
By the Schwartz Reflection Principle,
\begin{gather*}
\big\{z\!\in\!\C:\,\{z,\bar{z}\}\!\cap\!\psi(U\!\cap\!U')\neq\eset\big\}
\lra \big\{z\!\in\!\C:\,\{z,\bar{z}\}\!\cap\!\psi'(U\!\cap\!U')\neq\eset\big\},\\
z\lra\begin{cases}\psi'(\psi^{-1}(z)),&\hbox{if}~z\!\in\!\psi(U\!\cap\!U');\\
\ov{\psi'(\psi^{-1}(\bar{z}))},&\hbox{if}~\bar{z}\!\in\!\psi(U\!\cap\!U');
\end{cases}\end{gather*}
is a bi-holomorphic map between open subsets of~$\C$.
\end{proof}

\begin{cor}\label{symsurf_cor}
Let $(\hat\Si,\fJ,\si)$ be a Riemann surface with an involution (and without boundary).
For every $z\!\in\!\hat\Si$, there exists a holomorphic coordinate chart $\psi\!:U\!\lra\!W\subset\!\C$
($W$ not necessarily connected) 
such that $\psi\!\circ\!\si\!=\!\si_0\!\circ\!\psi$.
\end{cor}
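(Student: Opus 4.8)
The plan is to treat separately the cases $\si(z)\neq z$ and $\si(z)=z$. Since $\si$ is anti-holomorphic (equivalently $\si^*\fJ=-\fJ$), in any holomorphic chart $\si$ has the local form $w\mapsto\overline{G(w)}$ for a holomorphic germ~$G$; this is just the observation that $w\mapsto\overline{\si(w)}$ is holomorphic. Throughout I use that $(\hat\Si,\fJ)$ is covered by holomorphic charts (Corollary~\ref{sh_cor} with empty boundary, or the Newlander--Nirenberg theorem), this being part of the Riemann surface structure.

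\emph{The case $\si(z)\neq z$.} After shrinking, I would choose a holomorphic chart $\psi_0\colon U_0\to W_0$ around~$z$ with $U_0\cap\si(U_0)=\eset$ (using that $\hat\Si$ is Hausdorff) and with $W_0$ contained in the open upper half-plane, so that $W_0\cap\si_0(W_0)=\eset$. Then set $U=U_0\cup\si(U_0)$, $W=W_0\cup\si_0(W_0)$, and define $\psi\colon U\to W$ by $\psi|_{U_0}=\psi_0$ and $\psi|_{\si(U_0)}=\si_0\circ\psi_0\circ\si$. The second branch is a composite of two anti-holomorphic maps with one holomorphic map, hence holomorphic, so $\psi$ is holomorphic on each of the two disjoint pieces; it is a homeomorphism onto~$W$ since $\psi(U_0)=W_0$ and $\psi(\si(U_0))=\si_0(W_0)$ are disjoint. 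The identity $\psi\circ\si=\si_0\circ\psi$ is then a direct check on $U_0$ and on $\si(U_0)$, using $\si\circ\si=\id$ and $\si_0\circ\si_0=\id$.

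\emph{The case $\si(z)=z$} (the main point). I would start from a holomorphic chart $\psi_0\colon U_0\to W_0$ with $\psi_0(z)=0$, and set $\si'=\psi_0\circ\si\circ\psi_0^{-1}$, an anti-holomorphic involution of a neighborhood of~$0$ fixing~$0$; write $\si'(w)=\overline{G(w)}$ with $G$ holomorphic and $G(0)=0$. The relation $\si'\circ\si'=\id$ unwinds to $G(\overline{G(w)})=\bar w$ near~$0$, and since $\si$ is anti-holomorphic its differential $\tnd\si_z$ at~$z$ is complex-antilinear; being an involution it equals $v\mapsto a\bar v$ for some $a$ with $|a|=1$, i.e.\ $|G'(0)|=1$. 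I would first normalize the linear part: after replacing $\psi_0$ by $c^{-1}\psi_0$ for a unimodular~$c$ with $\bar c/c=G'(0)$, one may assume $G'(0)=1$. Then put
$$h(w)=\tfrac12\big(w+G(w)\big),$$
a holomorphic germ with $h(0)=0$ and $h'(0)=1$, hence a local biholomorphism; using $G(\overline{G(w)})=\bar w$ one gets
$$h\big(\si'(w)\big)=\tfrac12\big(\overline{G(w)}+G(\overline{G(w)})\big)=\tfrac12\big(\overline{G(w)}+\bar w\big)=\overline{h(w)},$$
that is, $h\circ\si'=\si_0\circ h$. Shrinking domains so that $h$ is a diffeomorphism, $\psi=h\circ\psi_0$ is the required chart.

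The only delicate point is the fixed-point case, and within it the preliminary normalization of~$\tnd\si_z$: the naive average $\tfrac12(w+G(w))$ has vanishing derivative at~$0$ precisely when $G'(0)=-1$, i.e.\ when the reflection $\tnd\si_z$ has axis transverse to~$\R$ in the chosen chart, which is why the chart must first be rotated by~$c^{-1}$. Everything else is a routine verification, and the argument parallels the proof of Corollary~\ref{sh_cor}.
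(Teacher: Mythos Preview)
Your proof is correct. The case $\si(z)\neq z$ matches the paper's approach (you are in fact slightly more careful than the paper in ensuring $W_0\cap\si_0(W_0)=\eset$, so that the combined map is injective).

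For the fixed-point case your argument is genuinely different from the paper's. The paper first chooses a merely \emph{smooth} chart sending the fixed locus $\hat\Si^{\si}$ to~$\R$, then invokes Lemma~\ref{realanal_lem} (an analytic argument solving a perturbed Cauchy--Riemann equation) to straighten the induced almost complex structure on~$\bH$ while preserving~$\R$, and finally extends across~$\R$ by reflection. Your route instead starts from a \emph{holomorphic} chart, writes the local model of~$\si$ as $w\mapsto\overline{G(w)}$, rotates to make $G'(0)=1$, and then takes the holomorphic average $h(w)=\tfrac12(w+G(w))$; the involution relation $G(\overline{G(w)})=\bar w$ turns this directly into the conjugacy $h\circ\si'=\si_0\circ h$. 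This is the classical ``averaging over the group'' linearization of an order-two action, and it bypasses Lemma~\ref{realanal_lem} entirely. In the context of the paper that lemma is needed anyway for Corollary~\ref{sh_cor}, so re-using it costs nothing; as a standalone proof of Corollary~\ref{symsurf_cor}, however, your argument is shorter and more elementary. The only small thing you leave implicit is that after shrinking one should take a $\si'$-invariant neighborhood of~$0$ (e.g.\ intersect with its $\si'$-image) so that the intertwining relation holds on the full domain and the image is $\si_0$-invariant; this is routine.
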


\begin{proof}
If $z\!\not\in\!\hat\Si^{\si}$ and $\psi\!:U\!\lra\!\C$ is any holomorphic chart around $z$ such that
$U\!\cap\!\si(U)\!=\!\eset$, the holomorphic chart
$$U\!\cup\!\si(U)\lra \C, \qquad
z\lra\begin{cases}\psi(z),&\hbox{if}~z\!\in\!U;\\
\ov{\psi(\si(z))},&\hbox{if}~\si(z)\!\in\!U;
\end{cases}$$
has the desired property.\\

\noindent
Suppose $z\!\in\!\hat\Si^{\si}$. Since $\hat\Si^{\si}\!\subset\!\hat\Si$ is a smooth one-dimensional
submanifold, there exists a smooth chart 
$$\psi\!:(U,U\!\cap\!\hat\Si^{\si},z)\lra(\C,\R,0);$$
we can assume that $\si(U)\!=\!U$.
This chart induces an almost complex structure $\fJ'$ on a neighborhood of the origin in~$\bH$.
Composing~$\psi$ with a diffeomorphism provided by Lemma~\ref{realanal_lem}, 
we obtain a diffeomorphism
$$\psi'\!: \big(\psi^{-1}(\bH),\psi^{-1}(\bH)\!\cap\!\si(\psi^{-1}(\bH))\big)
\lra (\bH,\R)$$
such that $\psi'^*\fJ_0\!=\!\fJ$.
The holomorphic chart
\begin{gather*}
U\lra\C, \qquad
z\lra\begin{cases}\psi'(z),&\hbox{if}~z\!\in\!\psi^{-1}(\bH);\\
\ov{\psi'(\si(z))},&\hbox{if}~z\!\in\!\si\big(\psi^{-1}(\bH)\big);
\end{cases}\end{gather*}
intertwines $\si$ and $\si_0$.
\end{proof}

\end{document}